\documentclass[11pt,leqno]{article}
\usepackage{amsmath,amssymb,amsthm,latexsym}
\usepackage{indentfirst}
\usepackage{amsmath}
\newtheorem{Definition}{\bf \large Definition}[section]
\newtheorem{theorem}{\bf \large Theorem}[section]
\newtheorem{PROPOSITION}{\bf \large Proposition}[section]
\newtheorem{corollary}{\bf \large Corollary}[section]

\newtheorem{remark}{\bf \large Remark}[section]
\newtheorem{Lemma}{\bf \large Lemma}[section]
\newtheorem{conjecture}{\bf \large Conjecture}[section]

\title{Local rigidity of constant mean curvature hypersurfaces  in space forms (II) }
\author {{ XinXin Cheng$^a$,~~, Yayun Chen$^b$, ~~Tongzhu Li$^c$,} \\
	\small{$^{a,c}$Department of Mathematics, Beijing Institute of
		Technology, Beijing, 100081, China.} \\
	\small{Beijing,100081,China.} \\
\small{$^b$School of Science, Chang'an University, Xi'an, 710064, China.}\\
	\small{ E-mail: 3120256206@bit.edu.cn,~~ cyy59@chd.edu.cn,~~ litz@bit.edu.cn.}}
\date{}

\begin{document}
	\maketitle
	\begin{abstract}
This is the second article of a sequence of research on the local rigidity of constant mean curvature (CMC) hypersurfaces in space forms. In the
previous one, we studied the local rigidity of CMC hypersurfaces whose the number of the distinct principal curvatures satisfies  $g\leq 3$.
In this paper, we study the local rigidity of CMC hypersurfaces with $g\geq 4$. When $g>4$,  we prove  that if the $k$-order mean curvatures $H_k$, $(k=2,\cdots, g-1)$  are constant and there exist  enough multiple principal curvatures, then the CMC hypersurface  is an isoparametric hypersurface. When  $g=4$, if   $H_2$ and $H_3$ are constant, then the CMC hypersurface  is an isoparametric hypersurface.
	\end{abstract}
	
	\medskip\noindent
	{\bf 2010 Mathematics Subject Classification:} 53B25, 53C24.
	\par\noindent {\bf Key words:} CMC hypersurface, minimal hypersurface, Bryant conjecture, isoparametric hypersurface.
	
	\section{Introduction}
	The study of the rigidity of CMC hypersurfaces   is a very interesting problem, especially for local rigidity of CMC hypersurface. This paper continues to study the local rigidity problem of CMC hypersurfaces based on paper \cite{chli}.
Up to now, the research on the rigidity of CMC hypersurfaces  has mainly focused on the study of global rigidity of CMC hypersurfaces.
	In the late 20th century, S. S. Chern has proposed the following Chern Conjecture about the global rigidity of minimal hypersurfaces,
	
	{\bf Chern Conjecture}:
	{\it Let $x:M^n\to \mathbb{S}^{n+1}$ be a closed  minimal hypersurfaces with constant scalar curvatures $R$ in $(n+1)$-dimensional sphere $\mathbb{S}^{n+1}$($n\geq 2$). Let $\mathbb{A}_R$ be the collection of all the possible values of such scalar curvature $R$, then $\mathbb{A}_R$ is a discrete subset of real numbers.}
	
	The Chern Conjecture remains open, but there are many partial results. Peng and Terng (\cite{peng},\cite{peng1}) made the first effort to solve the Chern Conjecture and confirmed the second gap of $\mathbb{A}_R$. Precisely, they proved that if the scalar curvature $R$ of $M^n$
	is a constant, then there exists a positive constant $C(n)$ depending only on $n$ such that if
	$n\leq S\leq n+C(n)$, then $S=n$. Later, the pinching constant $C(n)$ was improved to $\frac{n}{3},~n>3$
	by Cheng and Yang (\cite{yang2},\cite{yang3}), and to $\frac{3n}{7}$ by Suh and Yang (\cite{yang}), respectively.
	In 1993, Chang \cite{chang2} solved  Chern Conjecture for $n=3$.
	Over the years, there have been many important developments on the second gap  for  the closed minimal hypersurfaces
	in sphere, see for example \cite{cheng1},\cite{DGW},\cite{ge},\cite{M},\cite{weixu},\cite{xu4} and \cite{zhang}.
	
	For CMC hypersurfaces ($H\neq 0$), there are  also many global rigidities, especially  the squared norm of the second fundamental
	form $S$ has pinching phenomenon, which
	is  more complicated than the minimal hypersurface case. In \cite{li0}, \cite{li1}, \cite{li3}, H.Z. Li proved many important results about pinching theorem for CMC hypersurfaces,  including the proof of  Pinkall-Sterling conjecture.
	In \cite{xu3}, Xu proved the  pinching theorem for submanifolds with parallel mean curvature in a sphere. In \cite{xu2}, Xu and Tian generalized Suh-Yang's pinching theorem \cite{yang} to the case
	where $M^n$ is a compact hypersurface with constant scalar curvature and small constant
	mean curvature in $\mathbb{S}^{n+1}$.  As for the 3-dimensional hypersurface case,
	combining results of Almeida-Brito \cite{ad} and Chang \cite{chang1}, the CMC hypersurface with constant scalar curvature was classified.
	For  $n=4$, Tang-Yang in \cite{tali} proved that, if the scalar curvature $R\geq 0$,  $3$-mean curvature
	$H_3$ and the number $g$ of distinct principal curvatures  are constant, then $M^4$ is isoparametric. Tang-Wei-Yan in \cite{tang} and Tang-Yan in \cite{tang1} generalized the theorem of de Almeida
	and Brito (\cite{ad}) for $n=3$ to any dimension $n$.
	\begin{theorem}(\cite{tang,tang1})\label{tany}
		Let $M^n (n\geq 4)$ be a closed immersed hypersurface in $\mathbb{S}^{n+1}$. If the
		following conditions are satisfied:\\
		(i) the principal curvatures $\lambda_1,\lambda_2, \cdots, \lambda_n$ are distinct;\\
		(ii) $H_k=\sum_{i=1}^n\lambda_i^k, (k=1,\cdots,n-1)$ are constants;\\
		(iii) the scalar curvature $R\geq 0$;\\
		then $M^n$ is isoparametric and $R=0$.
	\end{theorem}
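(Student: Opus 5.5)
We follow the strategy of de Almeida--Brito for $n=3$: use the distinctness of the principal curvatures to get a global orthonormal frame of principal directions, turn the constancy of the power sums into a linear system for the derivatives of the $\lambda_i$, and then combine the Gauss equation with an integral formula and $R\geq 0$.

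By (i) the $\lambda_i$ are smooth functions on $M^n$ with smooth unit principal vector fields $e_1,\dots,e_n$ (after a finite cover to orient the line fields). Write $h_{ijk}$ for the covariant derivative of the second fundamental form in this frame, so $h_{iik}=e_k(\lambda_i)$. By Codazzi, $h_{ijk}$ is totally symmetric, and for $i\neq j$
\[
\omega_{ij}(e_k)=\frac{h_{ijk}}{\lambda_i-\lambda_j}.
\]
Differentiating (ii) along $e_k$ gives, for $k$ fixed and $m=1,\dots,n-1$,
\[
\sum_{i=1}^n \lambda_i^{m-1}e_k(\lambda_i)=0 .
\]
This is a Vandermonde system of rank $n-1$ in the $n$ unknowns $e_k(\lambda_i)$. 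Hence $(e_k(\lambda_1),\dots,e_k(\lambda_n))$ is a multiple $f_k\,(c_1,\dots,c_n)$ of the unique kernel vector, with $c_i=\prod_{j\neq i}(\lambda_i-\lambda_j)^{-1}$ up to a common factor. The goal is then to show that all $f_k$ vanish, i.e. that the $\lambda_i$ are constant, which is exactly isoparametric.

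Next compute the Gauss equation $R_{ijij}=1+\lambda_i\lambda_j$ in two ways, as in Tang--Wei--Yan. First, $\sum_{i<j}(1+\lambda_i\lambda_j)$ is constant by (ii) and Newton's identities, so $R$ is constant. Second, express $R_{ijij}$ through the connection forms:
\[
R_{ijij}=e_i(\omega_{ij}(e_j))-e_j(\omega_{ij}(e_i))-\sum_k\bigl(\omega_{ik}(e_i)\omega_{jk}(e_j)+\dots\bigr),
\]
where all $\omega$'s are written via $h_{ijk}/(\lambda_i-\lambda_j)$. Summing over pairs with suitable weights, we aim for an integral formula of the form
\[
\int_M R\,dV=-\int_M Q\,dV,
\]
where the divergence terms integrate to zero on the closed $M$ and $Q\geq 0$ is a sum of squares of the $h_{ijk}$ with distinct indices and of the $f_k$. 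A natural weighting is to consider the vector fields $\sum_j \omega_{ij}(e_j)e_i$ and their divergences. Combined with (iii), the formula forces $R\equiv 0$ and $Q\equiv 0$. Hence $f_k=0$ and all $h_{ijk}$ vanish, so the $\lambda_i$ are constant. This also gives the conclusion $R=0$ for free.

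The main obstacle is the sign of $Q$. In general the Gauss-equation sum contains mixed terms $h_{iik}h_{jjk}/((\lambda_i-\lambda_k)(\lambda_j-\lambda_k))$ and $h_{ijk}^2$ terms with indefinite coefficients. The plan is to use the rank-one structure $h_{iik}=f_kc_i$ coming from the Vandermonde kernel. This should collapse the diagonal contributions into $f_k^2$ times a symmetric expression in the $\lambda$'s. The hope is that this expression simplifies, via the partial-fraction identities $\sum_i \lambda_i^m c_i=0$ for $m\leq n-2$ and $\sum_i \lambda_i^{n-1}c_i=1$, to a definite sign. The off-diagonal $h_{ijk}$ ($i,j,k$ distinct) terms would be handled by grouping each triple with the cyclic identity
\[
\frac{1}{(\lambda_i-\lambda_j)(\lambda_i-\lambda_k)}+\mathrm{cyc}=0.
\]
If a direct sign argument fails, a fallback is to first show from the Vandermonde structure and the third structure equations that the distribution where $f\neq 0$ has empty interior, and then conclude by continuity.
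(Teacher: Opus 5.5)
The paper gives no proof of this statement. It is quoted from Tang--Wei--Yan and Tang--Yan as background for Conjecture~\ref{conj}, so there is no internal argument to compare with. Judged on its own, your outline uses the right mechanism, the one de Almeida--Brito use for $n=3$. However, it stops exactly at the step that carries the theorem.

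\textbf{The missing inequality.} Take uniform weights (i.e.\ compute the scalar curvature itself) and use the field $V=\sum_j\nabla_{e_j}e_j$. This field is globally defined because it is insensitive to $e_j\mapsto -e_j$, so no cover is needed. One then gets pointwise
\[
R=2\,\mathrm{div}\,V+\sum_{m}\Big[\Big(\sum_{j\neq m}a_{jm}\Big)^2-\sum_{j\neq m}a_{jm}^2\Big],\qquad a_{jm}=\frac{e_m(\lambda_j)}{\lambda_j-\lambda_m}.
\]
The terms $h_{ijk}^2$ with $i,j,k$ distinct cancel identically by your cyclic identity. Substituting $e_m(\lambda_j)=c_jf_m$ gives $R=2\,\mathrm{div}\,V+\sum_m B_mf_m^2$, where
\[
B_m=2\sum_{\substack{j<k\\ j,k\neq m}}\frac{c_j\,c_k}{(\lambda_j-\lambda_m)(\lambda_k-\lambda_m)} .
\]
What remains to be proved is $B_m<0$ for every $m$ and every configuration of $n$ distinct reals. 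Granted that, $\int_M R=\int_M\sum_m B_mf_m^2\le 0$ together with $R\ge 0$ constant gives $R=0$ and $f\equiv 0$.

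For $n=3$ this sign is free, since $B_m=-2\prod_{i<j}(\lambda_i-\lambda_j)^{-2}$. Already for $n=4$ it is equivalent to positivity of a nontrivial symmetric quartic in the differences $\lambda_j-\lambda_m$. For general $n$ it is a genuine algebraic inequality. Your proposal replaces it by ``the hope is that this expression simplifies \ldots\ to a definite sign''. That inequality is the missing idea, and without it the argument proves nothing beyond $n=3$.

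\textbf{Two further problems.} First, your announced conclusion that all $h_{ijk}$ vanish is false. The isoparametric hypersurfaces with $g=n=4$ in $\mathbb{S}^5$ and $g=n=6$ in $\mathbb{S}^7$ (all multiplicities one) satisfy every hypothesis, with $R=0$. Yet they have $\nabla A\neq 0$, since a parallel second fundamental form forces $g\le 2$. So no correct integral formula can contain the distinct-index $h_{ijk}^2$ with positive weight. Only $f_k\equiv 0$ can be concluded, and that already gives constancy of the $\lambda_i$, which is all you need.

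Second, the fallback is not an argument. A purely local proof would use neither closedness nor $R\ge 0$. It would amount to the case of Conjecture~\ref{conj} in which all $g=n$ principal curvatures are simple, and the paper itself explicitly leaves that case open.
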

	The rigidity theorem mentioned above are all given under the assumption of global condition. In this paper, we   study the local rigidity of the CMC hypersurfaces $M^n$. Locally, the examples of CMC hypersurface are very abundant, and without appropriate conditions, CMC hypersurfaces have no any local rigidity, even in the case of closed CMC hypersurfaces. Let $A$ denote  the shape operator of the hypersurface $M^n$, then the $k-$order mean curvature $H_k$ is defined as $$H_k=\text{tr}(A^k).$$ Thus $H=\frac{1}{n}H_1=\frac{1}{n}tr(A).$ Here the local rigidity of CMC hypersurfaces  refers to the uniqueness of the CMC hypersurfaces under  appropriate condition on $k-$order curvatures.
	In \cite{chli}, we study the local rigidity of the CMC hypersurfaces, we obtained the following local rigidity theorem,
	\begin{theorem}\cite{chli}\label{chenli}
		Let $x: M^n\to \mathbb{M}^{n+1}(c), ~n\geq 4,$ be a piece of immersed CMC  hypersurface   in the $(n+1)$-dimensional
		space  form $\mathbb{M}^{n+1}(c)$. If the scalar curvature $R$ is constant and the number $g$ of distinct principal curvatures satisfies $g\leq 3$, then $M^n$ is an isoparametric hypersurface.
	\end{theorem}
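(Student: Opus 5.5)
We prove Theorem \ref{chenli} by working locally on an open dense subset where $g$ is constant and the multiplicities of the principal curvatures are constant. There the distinct principal curvatures $\mu_1,\dots,\mu_g$ are smooth, and we can choose a smooth local orthonormal frame $\{e_i\}$ of principal directions. The constancy of $H_1=nH$ and of the scalar curvature $R$ (Gauss equation: $n(n-1)c+n^2H^2-S=n(n-1)R$) together give that $H_1=\mathrm{tr}A$ and $H_2=S=\mathrm{tr}A^2$ are both constant. The goal is then to show that each $\mu_\alpha$ is constant, which is exactly isoparametricity.

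\textbf{Case $g=1$.} The hypersurface is umbilical, so it is trivially isoparametric.

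\textbf{Case $g=2$.} Write $m_1\mu_1+m_2\mu_2=nH$ and $m_1\mu_1^2+m_2\mu_2^2=S$, where $m_1,m_2$ are the constant multiplicities. Eliminating $\mu_2$ from the first equation and substituting into the second gives a nondegenerate quadratic in $\mu_1$ with constant coefficients. Since $\mu_1$ is continuous and takes values in a finite set, it is constant, and then so is $\mu_2$. Note that this case needs no structure equations.

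\textbf{Case $g=3$.} The two equations
\[
\sum_{\alpha=1}^{3} m_\alpha\mu_\alpha = nH, \qquad \sum_{\alpha=1}^{3} m_\alpha\mu_\alpha^2 = S
\]
leave one functional degree of freedom, so the $\mu_\alpha$ lie on a fixed conic. The Codazzi equations give two standard facts:
\begin{itemize}
\item[(a)] If $m_\alpha\ge 2$, then $e_j(\mu_\alpha)=0$ for every $e_j$ in the $\mu_\alpha$-eigenspace.
\item[(b)] For $e_i,e_j$ in distinct eigenspaces $V_\alpha,V_\beta$, the connection coefficients satisfy $(\mu_\alpha-\mu_\beta)\omega_{ij}(e_k)=h_{ijk}$. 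Moreover, $h_{iik}=e_k(\mu_\alpha)$ for $e_i\in V_\alpha$, and the components $h_{ijk}$ with $i,j,k$ in three distinct eigenspaces are the only ones not determined by derivatives of the $\mu$'s.
\end{itemize}
Differentiating the two constraints, $d\mu_1,d\mu_2,d\mu_3$ are all proportional to a single $1$-form $\theta$, with coefficients that are functions of the $\mu$'s.

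We will show $\theta=0$. Suppose instead $\theta\neq0$ on an open set. Combining (a) with the proportionality, we will first show that $\theta$ annihilates every eigenspace of dimension at least $2$. The next step is to use the Gauss equation: we compute the sectional curvature $K(e_i,e_j)=c+\mu_\alpha\mu_\beta$ for $e_i\in V_\alpha$, $e_j\in V_\beta$ in two ways, once from the Gauss equation and once from the connection forms determined by (b). Here we plan to use $\Delta H_1=0$ and $\Delta H_2=0$ in their Simons-type form, namely
\[
\tfrac12\Delta S=\sum h_{ijk}^2+\sum_i\lambda_i(nH)_{,ii}+\big(nc-S\big)S+nH\,\mathrm{tr}A^3 .
\]
This gives a relation between $\sum h_{ijk}^2$ and algebraic expressions in the $\mu$'s.

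The main obstacle is to close this system when all three multiplicities may be $1$ or when the mixed terms $h_{ijk}$ (indices in three distinct eigenspaces) do not vanish. The first approach I would try is to work along an integral curve of the unit vector field dual to $\theta$. Along such a curve, the relations above reduce to a polynomial ODE in a single parameter $t$, namely $\mu_1$. Differentiating the Gauss identities along this direction repeatedly should produce a polynomial identity in $t$ with coefficients depending only on $n,c,H,S,m_\alpha$. The point to check is that this identity is nontrivial, using $n\ge4$ to get enough independent relations. Once that is done, $t$ is forced to lie in a finite set, so $\theta=0$, and all the $\mu_\alpha$ are constant, which completes the proof.
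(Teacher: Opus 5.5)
This theorem is not proved in the paper; it is imported from \cite{chli}. So your argument has to stand on its own, and for $g=3$ it does not. The cases $g\le 2$ are fine: they are Proposition \ref{pro1} with $r=2$. Your facts (a), (b) and the proportionality $d\mu_\alpha\parallel\theta$ are also correct. Note that (a) together with the proportionality already forces $\theta=0$ when all three multiplicities are at least $2$; this is the Dupin argument of Theorem \ref{th2}.

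The gap is the case $g=3$ with a simple principal curvature, which is the entire content of the theorem. There your last paragraph is only a programme. The phrases ``should produce a polynomial identity'' and ``the point to check is that this identity is nontrivial'' name exactly the step that has to be carried out. The Gauss--Codazzi system obeys Bianchi-type compatibility conditions, so repeated differentiation may simply return consequences of relations you already have. Without an explicit equation shown not to vanish identically on the conic $\{\sum m_\alpha\mu_\alpha=nH,\ \sum m_\alpha\mu_\alpha^2=S\}$, nothing follows.

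The reduction to a one-variable ODE is also unjustified, for three reasons:
\begin{itemize}
\item You misplace the role of $n\ge 4$. With $g=3$ it gives $m_1+m_2+m_3\ge 4$, so ``all three multiplicities equal to $1$'' never occurs. What it buys is a multiple principal curvature, hence at most two simple directions on which $\theta$ can be nonzero.
\item The identities (\ref{con3-1}) contain the mixed components $h_{ijk}$ with indices in three distinct eigenspaces. These are not functions of $t$, and you never show they vanish.
\item With two simple principal curvatures, $\theta$ has two independent components $\theta(e_1),\theta(e_2)$. The Gauss equations involve $e_1e_1$, $e_2e_2$ and $e_1e_2$ derivatives separately, so restricting to one integral curve of the dual of $\theta$ does not give a closed ODE in $t$.
\end{itemize}

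What is missing is a concrete elimination, in the spirit of how this paper treats the analogous $g\ge 4$ cases.

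\emph{Case $m_1=1$, $m_2,m_3\ge 2$.} Here $\theta$ is a multiple of $\omega_1$. The Ricci identity for $h_{11,ij}$ with $e_i\in V_2$, $e_j\in V_3$, computed as in (\ref{pro1-2})--(\ref{pro1-4}), gives $e_1(\mu_1)h_{1i,j}=0$. Hence every $h_{ijk}$ is determined by the $e_1(\mu_\alpha)$. Then (\ref{con3-1}) for $R_{ijij}$ expresses $e_1(\mu_1)^2$ as an explicit rational function $F$ of the $\mu$'s. Next, (\ref{con3-1}) for $R_{1i1i}$ and $R_{1j1j}$, combined with (\ref{cond5}) and with the $e_1$-derivative of $e_1(\mu_1)^2=F$, gives purely algebraic equations in the $\mu$'s. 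You must then verify that at least one of them is not identically satisfied on the conic.

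\emph{Case $m_1=m_2=1$, $m_3=n-2\ge 2$.} First you must show $h_{12,\alpha}=0$ for $e_\alpha\in V_3$. Comparing $h_{11,\alpha\beta}$ with $h_{\alpha\beta,11}$ gives $h_{12,\alpha}h_{12,\beta}=0$ for $\alpha\ne\beta$. Since $R_{1\alpha1\alpha}=c+\mu_1\mu_3$ is independent of $\alpha$, (\ref{con3-1}) shows $h_{12,\alpha}^2$ is independent of $\alpha$; with $m_3\ge 2$ this forces $h_{12,\alpha}=0$. Compare the Claim in Case 4 of the proof of Theorem \ref{th4}. After that, use the $R_{1\alpha1\alpha}$ and $R_{2\alpha2\alpha}$ equations to remove second derivatives. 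Then solve for $e_1(t)^2$ and $e_2(t)^2$ from the $R_{1212}$ equation together with (\ref{lap-1}). Finally, extract a closing algebraic equation from the integrability condition for $e_1e_2(t)-e_2e_1(t)$, and check that it is nontrivial.

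Minor point: your Simons identity omits the term $-cn^2H^2$ of (\ref{lap-1}), and its term $\sum_i\lambda_i(nH)_{,ii}$ vanishes because $H$ is constant.
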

	When $H=0$, Theorem \ref{chenli} solves the  high dimensional
	version of Bryant Conjecture.
	From his work on the exterior differential systems, R. Bryant  proposed the following Conjecture (see \cite{chang1}).
	
	{\bf Bryant Conjecture}: {\it A piece of minimal hypersurface of constant
		scalar curvature in $\mathbb{S}^4$ is isoparametric.}

	Based on the results of Theorem \ref{tany}, Theorem \ref{chenli} and Bryant Conjecture, we naturally have the following conjecture,
	\begin{conjecture}\label{conj}
		Let $M^n (n\geq 4)$ be  a piece of immersed CMC  hypersurface in $\mathbb{M}^{n+1}$ with $g$ distinct principal curvatures. If
		\begin{equation}\label{const}
			H_k~  is~ constant ~for~ k=2,\cdots, g-1,
		\end{equation}
		then $M^n$ is an isoparametric hypersurface.
	\end{conjecture}
	\begin{remark}
		In Conjecture \ref{conj}, the conditions (\ref{const}) are a system of algebraic equations, which imply that there exists only one independent principal curvature function among $g$ distinct principal curvatures. The conjecture essentially states that if the $g$ principal curvature functions satisfy the condition (\ref{const}), then the principal curvature functions must be constant.
	\end{remark}
	
	The goal of this paper is to try to prove Conjecture \ref{conj}.
	In \cite{M1}, R. Miyaoka has studied Chern's Conjecture in the Dupin case. A hypersurface is called Dupin if each principal curvature  is constant along its curvature direction. In \cite{M1} R. Miyaoka has proved that a closed proper Dupin hypersurface with constant mean curvature is isoparametric (i) if $g=3$, (ii) if
	$g=6$ and has constant scalar curvature, or (iii) if $g=4$ and has constant
	Lie curvature, and (iv) if $g=6$ and has constant Lie curvatures.
	This paper first proves the Conjecture \ref{conj} for the case of Dupin hypersurfaces,
	\begin{theorem}\label{th2}
		Let $x: M^n\to \mathbb{M}^{n+1}(c), (n\geq 4)$ be a piece of immersed CMC  hypersurface with $g$ distinct principal curvatures in the $(n+1)$-dimensional
		space  form $\mathbb{M}^{n+1}(c)$. If the following conditions are satisfies:\\
		(1) $H_k$  is constant for $k=2,\cdots, g-1$,\\
		(2) the hypersurface $M^n$ is a Dupin hypersurface,\\
		then $M^n$ is an isoparametric hypersurface.
	\end{theorem}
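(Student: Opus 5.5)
We begin with the algebraic reduction described in the Remark after Conjecture \ref{conj}. On an open dense subset the number $g$ of distinct principal curvatures and their multiplicities $m_1,\dots,m_g$ are locally constant, and we work on such a connected piece. Write $\mu_1,\dots,\mu_g$ for the distinct principal curvature functions. The hypotheses say that
\[
p_k:=\sum_{i=1}^g m_i\mu_i^k
\]
is constant for $k=1,\dots,g-1$. Differentiating gives $\sum_i m_i\mu_i^{k-1}d\mu_i=0$ for $k=1,\dots,g-1$. This is a linear system of $g-1$ equations in the $g$ one-forms $d\mu_i$. Its coefficient matrix is a weighted Vandermonde matrix, which has rank $g-1$ because the $\mu_i$ are distinct. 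Cramer's rule then gives
\[
d\mu_i=\frac{c}{m_i\prod_{j\neq i}(\mu_i-\mu_j)}\,\omega,\qquad i=1,\dots,g,
\]
for a single one-form $\omega$ and a nonzero constant $c$. In particular, at each point either all $d\mu_i$ vanish or none does, and all the $d\mu_i$ are proportional to one covector. So it suffices to prove $\omega\equiv 0$.

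Next we use the Dupin hypothesis. For a principal curvature of multiplicity $m_i\ge 2$, Codazzi already forces $\mu_i$ to be constant along its curvature distribution $D_i$. The Dupin condition extends this to every $i$, including simple ones. Hence $d\mu_i(X)=0$ for all $X\in D_i$, so $\omega(X)=0$ on $D_i$. Because the $d\mu_j$ are nonzero multiples of $\omega$ wherever $\omega\ne 0$, this gives $d\mu_j(X)=0$ for every $j$ and every $X\in D_i$. Letting $i$ run over $1,\dots,g$, the distributions $D_i$ span $TM$, so $\omega=0$ everywhere. Therefore every $d\mu_i=0$, all principal curvatures are constant, and $M^n$ is isoparametric.

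The only delicate step is the linear-algebra claim that the one-forms $d\mu_i$ are all multiples of one form with nowhere-vanishing coefficients. This comes from the cofactor computation of the Vandermonde system: the $(g-1)\times(g-1)$ minors are Vandermonde determinants in distinct values and hence nonzero. One must also make sure the argument is run on the open dense set where the multiplicities are constant, and then extend to all of $M$ by continuity, since isoparametricity is a closed condition on a connected piece. Apart from this, the argument needs no structure equations beyond Codazzi for the multiple principal curvatures, and it works for every space form $\mathbb{M}^{n+1}(c)$.
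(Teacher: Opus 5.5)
Your proposal is correct and is essentially the paper's proof. The paper derives the same proportionality (\ref{cond4}): all $e_k(\bar{\lambda}_j)$ are proportional with nowhere-zero Vandermonde-type factors, and the paper normalizes the common factor as $e_k(H_g)/g$ via the augmented system, where you compute the one-dimensional kernel directly. It then uses the Dupin condition on the curvature distribution containing $e_k$ to kill every derivative, exactly as you do.

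Your closing remark about extending from an open dense set by continuity is unnecessary. The statement assumes $g$ is constant on the piece, and on a connected piece this already fixes the multiplicities. In any case, a function that is locally constant on an open dense set need not be constant, so closedness alone would not justify such an extension.
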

	
	A principal curvature is of  Dupin property, if the principal curvature  is constant along its curvature direction.  If a principal curvature of $M^n$ is multiple, then the principal curvature satisfies Dupin property. By Theorem \ref{th2} we can obtain the following corollary,
	\begin{corollary}\label{cor1}
		Let $x: M^n\to \mathbb{M}^{n+1}(c), (n\geq 4)$ be a piece of immersed CMC  hypersurface with $g$ distinct principal curvatures in the $(n+1)$-dimensional
		space  form $\mathbb{M}^{n+1}(c)$. If the following conditions are satisfies:\\
		(1) $H_k$  is constant for $k=2,\cdots, g-1$,\\
		(2) all principal curvatures are multiple,\\
		then $M^n$ is an isoparametric hypersurface.
	\end{corollary}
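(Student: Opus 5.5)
The plan is to reduce Corollary \ref{cor1} to Theorem \ref{th2}: if every principal curvature has multiplicity at least two, then $M^n$ is Dupin, and Theorem \ref{th2} applies. Condition (1) is common to both statements, so the only thing to check is that a principal curvature of constant multiplicity $m_i\geq 2$ is constant along its own curvature distribution.

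Since $g$ is the number of distinct principal curvatures, on the open set where the multiplicities are locally constant we can write $\lambda_1,\dots,\lambda_g$ for the distinct principal curvatures. Each $\lambda_i$ is smooth there, and its eigendistribution $V_i=\ker(A-\lambda_i I)$ is smooth of rank $m_i\geq2$. Choose a local orthonormal frame $\{e_a\}$ adapted to $V_1\oplus\cdots\oplus V_g$ and write $h_{ab}=\lambda_a\delta_{ab}$. The Codazzi equation $h_{abc}=h_{acb}$ holds in $\mathbb{M}^{n+1}(c)$ for every $c$, and differentiating $h_{ab}=\lambda_a\delta_{ab}$ covariantly gives
\[
h_{abc}=e_c(\lambda_a)\delta_{ab}+(\lambda_a-\lambda_b)\omega_{ab}(e_c).
\]
Take indices $a\neq b$ in the same block $V_i$ (possible because $m_i\geq2$). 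The second term vanishes, so $h_{aab}=e_b(\lambda_i)$ and $h_{aba}=0$. Codazzi symmetry $h_{aab}=h_{aba}$ then gives $e_b(\lambda_i)=0$. Since $b$ ranges over a frame of $V_i$ (for any given $b\in V_i$ pick $a\neq b$ in $V_i$), we get $X(\lambda_i)=0$ for all $X\in V_i$. This is exactly the Dupin property for $\lambda_i$, and it holds for every $i$, so $M^n$ is Dupin.

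Theorem \ref{th2} then says $M^n$ is isoparametric. The only subtle point is regularity: the argument needs the multiplicities to be constant so that the $\lambda_i$ and the $V_i$ are smooth. The hypothesis that $g$ is the number of distinct principal curvatures on the piece $M^n$ supplies this (work on the open dense set where the multiplicities are locally constant, then use continuity). Apart from that, the step is a routine computation and I do not expect any real obstacle.
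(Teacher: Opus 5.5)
Your proposal is correct and is essentially the paper's argument. The Codazzi computation $h_{aa,b}=h_{ab,a}=0$ for $a\neq b$ in the same eigenspace is exactly Lemma~\ref{le1}, which shows every multiple principal curvature is Dupin; Theorem~\ref{th2} then finishes, since its proof uses (\ref{cond4}) to propagate $e_k(\bar{\lambda}_i)=0$ to all $\bar{\lambda}_j$ (and your regularity caveat is harmless, because constancy of $g$ already makes the multiplicities locally constant).
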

	
	\begin{remark}
		The global topological conditions have very strong limitations on Dupin hypersurfaces. Thorbergsson \cite{ce} showed that the number of distinct principal curvatures $g=1,2,3,4$ or $6$ for compact proper Dupin hypersurfaces embedded in $\mathbb{S}^{n+1}$. But locally examples of Dupin hypersurfaces  are very many.
		In fact, Pinkall \cite{ce} discovered the
		basic constructions of building tubes, cylinders, cones and surfaces of revolution over a Dupin
		hypersurface. It is important to note that these constructions may yield a compact proper
		Dupin hypersurface only if the original one is a sphere.
	\end{remark}
	When the number of distinct  simple principal curvatures is relatively small, we can prove Conjecture \ref{conj}.
	
	\begin{theorem}\label{th3}
		Let $x: M^n\to \mathbb{M}^{n+1}(c), (n\geq 4)$ be a piece of immersed CMC  hypersurface with $g (\geq 4)$ distinct principal curvatures in the $(n+1)$-dimensional
		space  form $\mathbb{M}^{n+1}(c)$. If the following conditions are satisfies:\\
		(1) $H_k$  is constant for $k=2,\cdots, g-1$,\\
		(2) there exists  at most one simple principal curvature,\\
		then $M^n$ is an isoparametric hypersurface.
	\end{theorem}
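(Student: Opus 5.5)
Since all principal curvatures but at most one are multiple, and multiple principal curvatures automatically have the Dupin property, the only possible failure of Dupin-ness comes from the single simple principal curvature. We may therefore assume there is exactly one simple principal curvature $\lambda_g$ with unit principal direction $e_n$; otherwise Corollary \ref{cor1} already applies. The plan is to show that $e_n(\lambda_g)=0$, so that $M^n$ is Dupin, and then to invoke Theorem \ref{th2}.

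\textbf{Step 1: all gradients are parallel to $e_n$.} The conditions $H_1,\dots,H_{g-1}$ constant, together with the multiplicities $m_1,\dots,m_g$, form a Vandermonde-type system
\[
\sum_{\alpha} m_\alpha \lambda_\alpha^{k-1} d\lambda_\alpha = 0, \qquad k=1,\dots,g-1 .
\]
On the open dense set where the $\lambda_\alpha$ are distinct, this system forces $d\lambda_\alpha = f_\alpha\, d\lambda_g$, where the coefficients $f_\alpha$ are explicit rational functions of the $\lambda$'s and are nonzero wherever the curvatures are not all constant. A multiple curvature $\lambda_\alpha$ satisfies $e_i(\lambda_\alpha)=0$ for $e_i$ in its own eigenspace $V_\alpha$. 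Hence $d\lambda_g$ vanishes on $V_\alpha$ for every $\alpha<g$. It follows that each $\nabla\lambda_\alpha$ is parallel to $e_n$, i.e.
\[
e_i(\lambda_\alpha)=0 \quad \text{for } i\neq n .
\]

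\textbf{Step 2: structure equations.} Assume, for contradiction, that $e_n(\lambda_g)\neq 0$ on an open set. Using the Codazzi equations $(\lambda_\beta-\lambda_\gamma)\omega_{ij}(e_k)=\dots$, we read off the connection forms. For $i\in V_\alpha$ with $\alpha<g$,
\[
\omega_{in}(e_i)=\frac{e_n(\lambda_\alpha)}{\lambda_g-\lambda_\alpha},
\]
and since the gradients are parallel to $e_n$, the remaining components are largely forced to vanish. In particular, $e_n$ should be a geodesic field, and the distribution $e_n^\perp$ is integrable with umbilic-type leaves in each $V_\alpha$. Substituting into the Gauss equation
\[
R_{inin}=c+\lambda_\alpha\lambda_g
\]
and computing $R_{inin}$ from the connection forms, we obtain for each $\alpha<g$ a second-order ODE along $e_n$:
\[
e_n\!\left(\frac{e_n\lambda_\alpha}{\lambda_g-\lambda_\alpha}\right)-\left(\frac{e_n\lambda_\alpha}{\lambda_g-\lambda_\alpha}\right)^{2}=\dots
\]
In addition, the Gauss equation for $R_{ijij}$ with $i\in V_\alpha$, $j\in V_\beta$, $\alpha\neq\beta<g$, gives the purely algebraic relations
\[
c+\lambda_\alpha\lambda_\beta=-\frac{e_n\lambda_\alpha\, e_n\lambda_\beta}{(\lambda_g-\lambda_\alpha)(\lambda_g-\lambda_\beta)} .
\]

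\textbf{Step 3: the main obstacle.} The hard step is to turn these relations into a contradiction. Set $t=\lambda_g$ and write $e_n\lambda_\alpha = f_\alpha(t)\,e_n t$. With $g\geq 4$ there are at least three indices $\alpha<g$, so the algebraic relations give at least three equations of the form
\[
(e_n t)^2\,F_{\alpha\beta}(t) = -(c+\lambda_\alpha\lambda_\beta),
\]
where each $F_{\alpha\beta}$ is an explicit function of $t$ because every $\lambda_\alpha$ is a function of $t$. Eliminating $(e_n t)^2$ between two of these equations gives a nontrivial algebraic identity in $t$. I would then show that this identity is not satisfied identically; the way to attempt this is through the leading-order behavior of the Vandermonde coefficients, or by comparing the resulting ratios with the ODE from Step 2. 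That would force $t$ to be constant, contradicting $e_n(\lambda_g)\neq 0$.

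If this direct elimination proves too messy, the fallback is to differentiate the algebraic relations along $e_n$ and use the ODE to obtain polynomial identities. Either route yields $e_n\lambda_g=0$, so $M^n$ is Dupin, and Theorem \ref{th2} gives that $M^n$ is isoparametric.
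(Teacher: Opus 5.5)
Your reduction (exactly one simple curvature, all gradients along its direction $e_n$, Riccati-type equations from $R_{nini}=c+\lambda_g\lambda_\alpha$) matches the paper, where the simple curvature is $\bar\lambda_1$ with direction $e_1$. The gap is that the ``purely algebraic relations'' of Step 2, on which Step 3 rests, are false. By (\ref{con3-1}), for $i\in V_\alpha$, $j\in V_\beta$, $\alpha\neq\beta<g$, and granting $h_{ij,n}=0$, one actually gets
\[
c+\lambda_\alpha\lambda_\beta=-\frac{e_n\lambda_\alpha\,e_n\lambda_\beta}{(\lambda_g-\lambda_\alpha)(\lambda_g-\lambda_\beta)}+\sum_{\gamma\neq\alpha,\beta,g}\ \sum_{m\in V_\gamma}\frac{2h_{ij,m}^2}{(\lambda_\gamma-\lambda_\alpha)(\lambda_\gamma-\lambda_\beta)} .
\]
Since $g\geq4$ gives at least three multiple eigenspaces, the double sum is never empty. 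The $h_{ij,m}$ with $i,j,m$ in three distinct eigenspaces are, up to the factor $\lambda_i-\lambda_j$, the connection components $\omega_{ij}(e_m)$. Neither the gradient condition nor Codazzi kills them, and they are nonzero in general: on any isoparametric hypersurface with $g\geq3$ one has $\nabla A\neq0$, and every nonzero $h_{ij,m}$ is of this type. So your relations contain unknown quantities, and eliminating $(e_nt)^2$ between two of them gives no identity in $t$.

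The vanishing you do need, $h_{ni,j}=0$ for $i,j$ in different multiple eigenspaces, also does not follow from ``gradients parallel to $e_n$''. Without it, even the $R_{nini}$ equation picks up $2h_{ni,m}^2$ terms. The paper obtains it in (\ref{pro1-4}) from the Ricci identity $h_{nn,ij}=h_{nn,ji}$, which gives $h_{nn,n}h_{ni,j}(\lambda_i-\lambda_j)=0$, together with $e_n\lambda_g\neq0$.

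The paper avoids the triple terms by using only the equations for $R_{nini}$ (its (\ref{con3-11})). After $h_{ni,j}=0$, these involve nothing but $e_n\lambda_\alpha$ and $e_ne_n\lambda_\alpha$. Differentiating $P(\lambda_\alpha)e_n\lambda_\alpha=P(\lambda_\beta)e_n\lambda_\beta$ along $e_n$, as in (\ref{cond5}), expresses each $e_ne_n\lambda_\alpha$ through a fixed one plus a rational multiple of $(e_n\lambda_g)^2$. Taking three multiple curvatures and eliminating that remaining second derivative leaves two equations linear in $(e_n\lambda_g)^2$, with coefficients rational in the $\lambda$'s (using $R_{nini}=c+\lambda_g\lambda_\alpha$). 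Eliminating $(e_n\lambda_g)^2$ then yields the algebraic relation (\ref{con3-16}). Your ``fallback'' points in this direction, but it has to replace the $R_{ijij}$ relations, not supplement them.

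Finally, the decisive claim of Step 3 --- that the resulting relation is not satisfied identically along the curve cut out by the power-sum equations --- is stated only as an intention. The paper disposes of it by asserting that (\ref{con3-16}) is independent of the symmetric system (\ref{curva-1}); your proposal gives no argument for it.
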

	
	When $g\leq 4$,  we  can also prove Conjecture \ref{conj},
	\begin{theorem}\label{th4}
		Let $x: M^n\to \mathbb{M}^{n+1}(c), (n\geq 5)$ be a piece of immersed CMC  hypersurface with $g$ distinct principal curvatures in the $(n+1)$-dimensional
		space  form $\mathbb{M}^{n+1}(c)$. If the following conditions are satisfies:\\
		(1) $H_2$ and $H_3$  are constant,\\
		(2) $g\leq 4$,\\
		then $M^n$ is an isoparametric hypersurface.
	\end{theorem}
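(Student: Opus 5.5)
The plan is to reduce at once to the genuinely new case. If $g\le 3$, then $H_1$ and $H_2$ are constant, so the scalar curvature $R=n(n-1)c+H_1^2-H_2$ is constant, and Theorem \ref{chenli} applies directly. So assume $g=4$, with distinct principal curvatures $\lambda_1,\lambda_2,\lambda_3,\lambda_4$ of multiplicities $m_1,\dots,m_4$, $\sum m_i=n\ge5$. Since $n\ge5$, at least one multiplicity is $\ge2$. Corollary \ref{cor1} settles the case where all $m_i\ge2$, and Theorem \ref{th3} settles the case with at most one simple principal curvature. It therefore remains to treat two or three simple principal curvatures. Three simple ones force the fourth multiplicity to be $n-3\ge2$.

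The algebraic step comes first. On an open dense set where the multiplicities are locally constant, the three equations $\sum m_i\lambda_i^k=\text{const}$ for $k=1,2,3$ define a curve in $\mathbb{R}^4$. Differentiating gives
\[
\sum_i m_i\lambda_i^{k-1}d\lambda_i=0,\qquad k=1,2,3.
\]
A Vandermonde argument then yields $d\lambda_i=\phi_i\, d\lambda$ for a single function $\lambda$, with
\[
m_i\phi_i=\frac{C}{\prod_{j\ne i}(\lambda_i-\lambda_j)} .
\]
In particular, if some $\lambda_i$ is constant on an open set, all of them are constant. If none is, then all $d\lambda_i$ are nonzero and proportional.

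Next I use the structure equations in a local orthonormal principal frame $e_a$ with $A e_a=\lambda_{i(a)}e_a$. The Codazzi equations give $h_{abc}=0$ when $a,b$ lie in the same eigenspace and $c$ lies in a different one, apart from the derivative terms. They also give $e_c(\lambda_i)=0$ for $c$ in the eigenspace $V_i$ whenever $m_i\ge2$. Because all $d\lambda_i$ are proportional, the vanishing of $e_c(\lambda_i)$ forces $e_c(\lambda)=0$, hence $e_c(\lambda_j)=0$ for all $j$. So $d\lambda$ annihilates every eigenspace of multiplicity $\ge2$, and $\nabla\lambda$ lies in the span of the simple directions $e_s$.

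In the case of three simple curvatures $\lambda_1,\lambda_2,\lambda_3$, write $\nabla\lambda=\sum_{s\le3}\mu_s e_s$. The Gauss equation for the sectional curvature $K(e_a,e_b)=c+\lambda_i\lambda_j$, computed through the connection forms, expresses the connection coefficients $\omega_{ab}(e_s)=h_{abs}/(\lambda_i-\lambda_j)$ in terms of $\phi_i\mu_s$. I plan to use the equations for pairs $(e_s,e_t)$ with $e_a\in V_4$, and the Codazzi equation for the mixed derivatives $e_t(e_s\lambda)-e_s(e_t\lambda)$. Together these should give an overdetermined polynomial system in the $\lambda_i$, with unknowns $\mu_s$ and the few $h_{123}$-type terms. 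The aim is to show that this system forces $\mu_s=0$, or else a nontrivial polynomial identity in the single parameter $\lambda$. Such an identity would make $\lambda$ locally constant, a contradiction.

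The case of exactly two simple curvatures, say $\lambda_1,\lambda_2$ with $m_3,m_4\ge2$, is handled the same way but with fewer unknowns. Here $\nabla\lambda=\mu_1e_1+\mu_2e_2$. The Dupin-type curvatures $\lambda_3,\lambda_4$ have integrable eigendistributions, and the quantities $e_s(\lambda_3)$, $e_s(\lambda_4)$ enter the Gauss equations for $K(e_a,e_b)$ with $e_a\in V_3$, $e_b\in V_4$. I expect this to give $\mu_1\mu_2$-relations that, combined with the Codazzi and Gauss equations for $K(e_1,e_2)$, force $\mu_1=\mu_2=0$.

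The main obstacle is the elimination step in the three-simple case. There the unknown $h_{123}$ and the second derivatives of $\lambda$ enter, and one must extract a nonvanishing polynomial (a resultant in the $\lambda_i$ with coefficients depending on $m_4$ and $c$). I would first try to handle this by forming the Gauss equation $R_{sasa}$ twice, once for each of two different simple directions, and subtracting. This should cancel the second-derivative terms and leave an algebraic relation among $\mu_s^2$, $h_{123}^2$ and the $\lambda_i$.
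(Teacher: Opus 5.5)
Your reduction is the same as the paper's. For $g\le 3$ you go to Theorem~\ref{chenli}, since $R=n(n-1)c+H_1^2-H_2$ is constant. The hypothesis $n\ge 5$ excludes four simple curvatures. Corollary~\ref{cor1} and Theorem~\ref{th3} handle the case of at most one simple curvature. Finally, (\ref{con41}) together with Lemma~\ref{le1} confines $\nabla\lambda$ to the simple directions. After that point, however, the proposal is a list of expectations, and two of the intended mechanisms would not work as described.

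With two simple curvatures, (\ref{con3-1}) for $e_i\in V_3$, $e_j\in V_4$ gives
\[
c+\lambda_3\lambda_4=\sum_{m=1,2}\frac{2h_{ij,m}^2-e_m(\lambda_3)e_m(\lambda_4)}{(\lambda_m-\lambda_3)(\lambda_m-\lambda_4)} .
\]
You must first get rid of the unknowns $h_{ij,1},h_{ij,2}$. The paper's route is the Ricci identity $h_{11,ij}=h_{11,ji}$ combined with a singular-value normalization of $(h_{1i,j})$. Even after that, the inhomogeneous term $c+\lambda_3\lambda_4$ is not zero in general, so this equation does not push toward $\mu_1=\mu_2=0$. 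It is one linear equation in $\mu_1^2,\mu_2^2$. What the system actually yields is a compatibility relation among the $\lambda_i$ (the paper's (\ref{pro2-17})), plus degenerate subcases.

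With three simple curvatures, subtracting $R_{1a1a}$ and $R_{2a2a}$ ($a\in V_4$) does not cancel second derivatives. By (\ref{con3-1}) their second-order terms are $-e_1e_1(\lambda_4)/(\lambda_1-\lambda_4)$ and $-e_2e_2(\lambda_4)/(\lambda_2-\lambda_4)$, which are unrelated. The elimination that works runs as follows:
the $R_{sasa}$ equations determine $e_se_s(\lambda_4)$;
(\ref{cond5}) transfers these to $e_se_s(\lambda_t)$;
substituting into the $R_{stst}$ equations, which contain $e_te_t(\lambda_s)-e_se_s(\lambda_t)$, leaves only first-order quantities and $h_{12,3}^2$.
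You also need the vanishing of $h_{st,\alpha}$ for $\alpha\in V_4$. The paper additionally uses $\Delta S=0$ and $\Delta f_3=0$ via (\ref{lap-1})--(\ref{lap-2}) to solve for $e_s(\lambda_4)^2$ and $h_{12,3}^2$, and then differentiates once more to reach (\ref{pro3-18}). None of these ingredients appears in your plan.

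The decisive gap is the final step in both cases, which you state only as an aim. After all eliminations you have a relation $F(\lambda_1,\dots,\lambda_4)=0$. You must show that $F$ does not vanish identically on any branch of the curve $\sum_i m_i\lambda_i^k=c_k$ ($k=1,2,3$). This has to hold for \emph{every} admissible choice of $c_1,c_2,c_3$, $c$ and the $m_i$, because a single exceptional choice of constants is exactly where a non-isoparametric example could hide. You must also treat separately the loci where the coefficients you divided by vanish. Without this, nothing prevents the overdetermined system from being consistent along a whole branch.

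You cannot close this gap by citing the paper. The paper carries out the eliminations but only asserts the independence. In the proof of Theorem~\ref{th3} it argues that an asymmetric equation cannot be generated by the symmetric power sums. That is not a valid criterion: for example, $(\lambda_1-\lambda_2)(\sum_i m_i\lambda_i-c_1)$ is asymmetric and vanishes on the whole curve. In Cases 3 and 4 of the present proof the independence is simply asserted, via ``complex calculations''. A real proof needs an explicit check or a structural reason why $F$ restricted to the curve is nontrivial. One option is to parametrize a branch by one of the $\lambda_i$ and exhibit a coefficient of the restricted $F$ that cannot vanish for any values of the constants.
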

	
	\begin{remark}
		In the proof process of Theorem \ref{th3} and \ref{th4}, our argument can essentially prove Conjecture \ref{conj} when there exists at least one multiple principal curvature.
		But when there are more many simple  principal curvatures,  we need to solve a very complex algebraic equation system. So this paper did not write about this complex solving process. I hope to find a concise way to handle it in the future.
	\end{remark}

	\par\noindent
	\section{Preliminaries}
	
	Let $\mathbb{M}^{n+1}(c)$ be an $(n+1)$-dimensional space form. When $c=1, \mathbb{M}^{n+1}(c)=\mathbb{S}^{n+1}$, which is an $(n+1)$-dimensional unit sphere;  When $c=0, \mathbb{M}^{n+1}(c)=\mathbb{R}^{n+1}$, which is an $(n+1)$-dimensional Euclidean space;  When $c=-1, \mathbb{M}^{n+1}(c)=\mathbb{H}^{n+1}$, which is an $(n+1)$-dimensional  hyperbolic space.
	
	Let $x:M^n\to \mathbb{M}^{n+1}(c)$ be an $n$-dimensional immersed  hypersurface
	in space form $\mathbb{M}^{n+1}(c)$. For any $p\in M^n$ we choose a local orthonormal frame $\{e_1,\cdots,e_n,e_{n+1}\}$
	around $p$ such that $e_1,\cdots,e_n$ are tangential to $M^n$ and $e_{n+1}$ is normal to $M^n$. Let $\{\omega_1,\cdots,\omega_n\}$ be the dual coframe and $\{\omega_{ij},~1\leq i,j\leq n\}$ be the connection $1$-forms.

In this paper we make the following convention
	on the range of indices, $$1\leq i,j,k\leq n.$$
	The structure equations of $M^n$ are given by
	\begin{equation*}
		\begin{split}
			&dw_{i}=\sum_jw_{ij}\wedge w_{j}, ~~ w_{ij}+w_{ji}=0,\\
			&dw_{ij}=\sum_k w_{ik}\wedge w_{kj}-\frac{1}{2}\sum_{k,l}R_{ijkl}w_{k}\wedge w_{l},
		\end{split}
	\end{equation*}
	where $R_{ijkl}$ is the curvature tensor of the induced metric on $M^n$.
	
	Let $II=\sum_{ij}h_{ij}\omega_i\otimes\omega_j$ denote the second fundamental form of the hypersurface, $ H=\frac{1}{n}\sum_{i}h_{ii}$ the mean curvature.
	The
	Gauss equation is
	\begin{equation}\label{gauss}
		R_{ijkl}=c(\delta_{ik}\delta_{jl}-\delta_{il}\delta_{jk})+h_{ik}h_{jl}-h_{il}h_{jk},
	\end{equation}
	and the Codazzi equation is
	\begin{equation}\label{coda}
		h_{ij,k}=h_{ik,j},
	\end{equation}
	where the covariant derivative of the second fundamental form is defined by
	$$\sum_mh_{ijm}w_{m}=dh_{ij}+\sum_mh_{mj}w_{mi}+\sum_mh_{im}w_{mj}.$$
	
	The second covariant derivative of the second fundamental form is defined by
	$$\sum_mh_{ij,km}w_{m}=dh_{ij,k}+\sum_mh_{mj,k}w_{mi}+\sum_mh_{im,k}w_{mj}+\sum_mh_{ij,m}\omega_{mk}.$$
	Thus we have the following Ricci identity
	\begin{equation}\label{ricd}
		h_{ijkl}-h_{ijlk}=\sum_mh_{mj}R_{mikl}+\sum_mh_{im}R_{mjkl}.
	\end{equation}
	
	By the Gauss equation (\ref{gauss}), we obtain the Ricci curvature $R_{ij}$ and the scalar curvature $R$ of the hypersurface,
	\begin{equation}\label{ricc}
		\begin{split}
			&R_{ij}=(n-1)c\delta_{ij}+nHh_{ij}-\sum_{m}h_{im}h_{mj},\\
			&R=n(n-1)c+n^2H^2-S,
		\end{split}
	\end{equation}
	where $S=\sum_{i,j}h_{ij}^2$ is the square norm of the second fundamental form.
	
	Let $A$ be the shape operator of the hypersurface $x$, which  is the dual  tensor  of the second fundamental form $II$. Define the $k$-order mean curvature $H_k$ by
	$$H_k = \text{Tr}(A^k),$$
	then $f_1=nH=nH_1,~f_2=\text{Tr}(A^2)=S$.
	
Let $\{\lambda_1,\cdots,\lambda_n\}$ be the eigenvalues of the  shape operator, which are called the principal curvatures of the hypersurface.
For $\lambda\in \{\lambda_1,\cdots,\lambda_n\}$, the curvature distribution $D(\lambda)$ is
	defined to be
	$$D_p(\lambda)=\{X\in T_pM^n|AX=\lambda X\},~~p\in M^n.$$
	\begin{Definition}
		Let $x:M^n\to \mathbb{M}^{n+1}(c)$ be an $n$-dimensional immersed  hypersurface
		in space form $\mathbb{M}^{n+1}(c)$. \\
		(1) If a principal curvature $\lambda_i$ satisfies $$X(\lambda_i)=0,~~ \forall~~ X\in D(\lambda_i),$$
		then the principal curvature $\lambda_i$ is called a Dupin principal curvature.\\
		(2) A hypersurface is called Dupin if each principal curvature  is a Dupin principal curvature.
	\end{Definition}
	When $dimD(\lambda_i)$ is constant $(=m)$ on $M^n$, $D(\lambda_i)$ is an involutive distribution. Moreover if $\lambda_i$ is a Dupin principal curvature,  the leaf $L$ is a piece of an $m$-dimensional subsphere of the
	curvature sphere  at each point $p\in M^n$.
	
	We denote by $g$  the number of the distinct principal curvatures. Then $g$ is a local constant. If $g$ is constant, then the principal curvatures are smooth. The results of Reckziegel \cite{reck} and Singley \cite{sin} imply  the following result (or see \cite{ce1}, p32.),
	\begin{PROPOSITION}(\cite{reck, sin})\label{rec}
		Let $x:M^n\to \mathbb{M}^{n+1}(c)$ be an $n$-dimensional immersed  hypersurface
		in space form $\mathbb{M}^{n+1}(c)$.  Then there always exists an open dense subset $U$ of $M^n$ on which the
		multiplicities of the principal curvatures are  constant, equivalently the number $g$ of distinct principal curvatures is constant.
	\end{PROPOSITION}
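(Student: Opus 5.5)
The plan is to prove that the set $U$ of points where all multiplicities are locally constant is open and dense, the standard Reckziegel--Singley argument. For each $p\in M^n$ let $\mu(p)=g(p)$ denote the number of distinct eigenvalues of the shape operator $A_p$. The first step is to show that $\mu$ is lower semicontinuous. The eigenvalues of a symmetric matrix depend continuously on its entries when counted with multiplicity, listed as $\lambda_1\le\cdots\le\lambda_n$. If $A_p$ has $m$ distinct eigenvalues, they are separated by some gap $\delta>0$. By continuity, for $q$ near $p$ each cluster of eigenvalues of $A_q$ stays within $\delta/3$ of the corresponding eigenvalue of $A_p$. Hence $A_q$ has at least $m$ distinct eigenvalues, which gives $\mu(q)\ge\mu(p)$ nearby.

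Next I would use the fact that $\mu$ takes values in $\{1,\dots,n\}$. Lower semicontinuity then makes each set $\{\mu\ge k\}$ open. The set $U_0$ of points where $\mu$ is locally constant is open and dense: given any open set $W$, choose $p\in W$ where $\mu|_W$ attains its maximum. Near $p$ we have $\mu\ge\mu(p)$ by lower semicontinuity and $\mu\le\mu(p)$ by maximality, so $\mu$ is constant near $p$. Thus $W\cap U_0\neq\emptyset$.

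On a connected open set where $\mu\equiv g$ is constant, the continuous clusters cannot merge or split, since either would change the count. So each cluster consists of one eigenvalue whose multiplicity $m_\alpha$ is a constant integer, because it is a continuous integer-valued function on a connected set. This gives constant multiplicities on $U:=U_0$, and the equivalence in the statement follows. Smoothness of the $g$ distinct principal curvatures then follows from the implicit function theorem applied to the factor $(t-\lambda_\alpha)^{m_\alpha}$ of the characteristic polynomial, or from smoothness of the spectral projection $\frac{1}{2\pi i}\oint (t-A)^{-1}\,dt$, whose trace against $A$ gives $m_\alpha\lambda_\alpha$.

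The main obstacle is the first step: the clustering argument for continuity of the eigenvalue multiset. I would handle it via continuity of the roots of the characteristic polynomial in its coefficients, using Rouché's theorem on small disks around each distinct eigenvalue.
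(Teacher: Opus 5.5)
Your proof is correct. The paper gives no argument of its own here; it simply cites Reckziegel and Singley (cf.\ Cecil--Ryan, p.~32). Your three steps are exactly the standard argument behind that citation: lower semicontinuity of the number of distinct principal curvatures from continuity of the ordered eigenvalues, density of the set where this integer-valued function is locally constant by taking its maximum on an arbitrary open set, and the gap/cluster argument forcing each multiplicity to be locally constant.

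Two small points. What you obtain, and what the statement must mean, is constancy of the multiplicities on each connected component of $U$. For the smoothness remark, which the statement does not need, apply the implicit function theorem to $\partial_t^{m_\alpha-1}\chi$, at which $\lambda_\alpha$ is a simple root, rather than to $\chi$ itself; alternatively, use the Riesz projection as you also suggest.
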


	If  the multiplicities of the principal curvatures are  constant on an open dense subset $U$ of $M^n$, then we can choose an orthonormal basis $\{e_1,e_2,\cdots,e_n\}$  for $TU$ such that
	$$(h_{ij})=diag(\lambda_1,\lambda_2,\cdots,\lambda_n).$$
	The smooth orthonormal frame $\{e_1,e_2,\cdots,e_n\}$ are called  unit principal vectors.

Therefore the $k$-order mean curvature
	$$H_k=\text{Tr}(A^k)=\lambda_1^k+\lambda_2^k+\cdots+\lambda_n^k.$$
By the Gauss equation,
$$R_{ijij}=c+\lambda_i\lambda_j.$$
	
	If the principal curvatures $\{\lambda_1,\cdots,\lambda_n\}$ are constant, then the hypersurface $x$ is called an isoparametric hypersurface.
	In $\mathbb{R}^{n+1}$ as well as $\mathbb{H}^{n+1}$, an isoparametric hypersurface has no more than two distinct principal curvatures and is completely
	classified (see \cite{ce}).
	\begin{theorem}\label{is0-1}(\cite{ce})
		Let $x:M^n\to \mathbb{R}^{n+1}$ be an isoparametric hypersurface, then $x$ is a hyperplane, or a hypersphere, or one of the following cylinders:
		$$x:\mathbb{R}^m\times \mathbb{S}^{n-m}(r)\to \mathbb{R}^{n+1},~~1\leq m\leq n-1.$$
	\end{theorem}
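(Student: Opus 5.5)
This classical result (due to Levi-Civita and Segre) follows in two steps: an algebraic identity of Cartan type shows there are at most two distinct principal curvatures, one of which must be $0$, and the geometry of the curvature distributions then identifies the hypersurface. By Proposition \ref{rec} we may work on an open dense set where the multiplicities are constant. There we pick unit principal vectors with $(h_{ij})=\mathrm{diag}(\lambda_1,\dots,\lambda_n)$, all $\lambda_i$ constant. Since the $\lambda_i$ are constant, the definition of $h_{ij,k}$ gives
\[
h_{ii,k}=0,\qquad h_{ij,k}=(\lambda_i-\lambda_j)\,\omega_{ij}(e_k)\quad (\lambda_i\neq\lambda_j).
\]
By the Codazzi equation (\ref{coda}) these are totally symmetric. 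Consequently $\omega_{ij}(e_k)=0$ whenever $\lambda_i\ne\lambda_j$ and $\lambda_k\in\{\lambda_i,\lambda_j\}$.

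\textbf{Step 1: the Cartan identity.} Fix $i$ and $j$ with $\lambda_i\neq\lambda_j$. We compute $R_{ijij}$ from the structure equation $d\omega_{ij}=\sum_k\omega_{ik}\wedge\omega_{kj}-\frac12\sum R_{ijkl}\omega_k\wedge\omega_l$, evaluated on $(e_i,e_j)$. Using the vanishing above, all contributions reduce to terms $\omega_{ik}(e_j)\omega_{kj}(e_i)$ with $\lambda_k\ne\lambda_i,\lambda_j$. Writing these through $h_{ijk}$ gives
\[
c+\lambda_i\lambda_j=R_{ijij}=\sum_{\lambda_k\neq\lambda_i,\lambda_j}\frac{2\,h_{ijk}^2}{(\lambda_k-\lambda_i)(\lambda_k-\lambda_j)}.
\]
Dividing by $\lambda_i-\lambda_j$ and summing over all $j$ with $\lambda_j\ne\lambda_i$, the right-hand side cancels in pairs by antisymmetry in $j\leftrightarrow k$. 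This yields Cartan's identity
\[
\sum_{j:\,\lambda_j\neq\lambda_i}\frac{c+\lambda_i\lambda_j}{\lambda_i-\lambda_j}=0,
\]
and here $c=0$. This bookkeeping, in particular checking that the cross terms cancel exactly, is the main technical obstacle. I would organize it by grouping the indices into eigenspaces.

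\textbf{Step 2: at most two principal curvatures.} Suppose some principal curvature is positive; replacing $e_{n+1}$ by $-e_{n+1}$ if necessary, this covers every case except $A\equiv 0$. Let $\lambda_i$ be the smallest positive principal curvature and examine each term $\lambda_i\lambda_j/(\lambda_i-\lambda_j)$:
\begin{itemize}
\item if $\lambda_j>\lambda_i$, the numerator is positive and the denominator negative, so the term is negative;
\item if $\lambda_j<0$, the numerator is negative and the denominator positive, so the term is negative;
\item if $\lambda_j=0$, the term is $0$.
\end{itemize}
By the choice of $\lambda_i$ no $\lambda_j$ lies strictly between $0$ and $\lambda_i$. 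Since the sum vanishes, every $\lambda_j\neq\lambda_i$ must equal $0$. Hence either $g=1$, or $g=2$ with principal curvatures $\lambda>0$ and $0$.

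\textbf{Step 3: geometric identification.} If $g=1$, the hypersurface is totally umbilic with constant $\lambda$. For $\lambda=0$ it is a piece of a hyperplane; for $\lambda\neq0$ the map $x+\lambda^{-1}e_{n+1}$ is constant, so it is a piece of a sphere of radius $1/\lambda$.

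If $g=2$, let $D(\lambda)$ have dimension $n-m$ and $D(0)$ have dimension $m$. By the vanishing of the mixed connection forms from Codazzi, both distributions are integrable and totally geodesic in $M^n$, so $M^n$ is locally a Riemannian product. The focal map $x+\lambda^{-1}e_{n+1}$ is constant along $D(\lambda)$, so the $D(\lambda)$-leaves are round $(n-m)$-spheres of radius $1/\lambda$. Along $D(0)$ we have $de_{n+1}=0$, so the $D(0)$-leaves are pieces of $m$-planes in $\mathbb{R}^{n+1}$ orthogonal to $e_{n+1}$. Moreover $e_{n+1}$ and the focal point vary only along $D(\lambda)$, which shows the focal set is a fixed affine $m$-plane. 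Therefore $x$ is a piece of $\mathbb{R}^m\times\mathbb{S}^{n-m}(r)$ with $r=1/\lambda$. By analyticity of isoparametric hypersurfaces (or a connectedness argument), the description extends from the open dense set to all of $M^n$.
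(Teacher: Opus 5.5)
Your proof is correct, and it is essentially the standard argument in the reference \cite{ce}, which the paper quotes without giving a proof of its own: Cartan's identity with $c=0$, the sign argument at the smallest positive principal curvature, and then the focal-map analysis of the two parallel curvature distributions. One slip to fix in Step 3: the focal point $x+\lambda^{-1}e_{n+1}$ is constant along $D(\lambda)$ and moves along $D(0)$, while it is $e_{n+1}$ that varies only along $D(\lambda)$; this fact, together with the local product structure and the fact that each $D(0)$-leaf is a piece of an $m$-plane, is what makes the focal set a single affine $m$-plane.
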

	\begin{theorem}\label{is0-2}(\cite{ce})
		Let $x:M^n\to \mathbb{H}^{n+1}$ be an isoparametric hypersurface, then $x$  is a hypersphere, or a horosphere, a hyperbolic hyperplane, or one of the following hyperbolic cylinders:
		$$x:\mathbb{H}^m(\sqrt{1+r^2})\times \mathbb{S}^{n-m}(r)\to \mathbb{H}^{n+1},~~1\leq m\leq n-1.$$
	\end{theorem}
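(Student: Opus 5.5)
The plan follows Cartan's classical argument: an algebraic identity forces $g\leq 2$, and a local integration then identifies the hypersurface.

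\textbf{Cartan's identity.} On the open dense set $U$ of Proposition \ref{rec}, write the distinct constant principal curvatures as $\mu_1,\dots,\mu_g$ with multiplicities $m_1,\dots,m_g$. I will first derive Cartan's identity
$$\sum_{j\neq i} m_j\,\frac{c+\mu_i\mu_j}{\mu_i-\mu_j}=0,\qquad i=1,\dots,g .$$
It comes from a computation in the principal frame. Since the $\mu_i$ are constant, the Codazzi equation (\ref{coda}) gives
$$h_{ij,k}=0 \text{ whenever two of } \lambda_i,\lambda_j,\lambda_k \text{ coincide},$$
and it also gives $\omega_{ij}=\sum_k \frac{h_{ij,k}}{\lambda_j-\lambda_i}\,\omega_k$ for $\lambda_i\neq\lambda_j$. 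Now take $e_i\in D(\mu_a)$ and $e_j\in D(\mu_b)$ with $a\neq b$. I differentiate the structure equation for $\omega_{ij}$ and compare with the Gauss equation (\ref{gauss}), which gives $R_{ijij}=c+\mu_a\mu_b$. This yields
$$c+\mu_a\mu_b=2\sum_{\lambda_k\neq\mu_a,\mu_b}\frac{h_{ijk}^2}{(\mu_a-\lambda_k)(\mu_b-\lambda_k)}.$$
Fix $a$, choose $\mu_b$ adjacent to $\mu_a$, and sum over $b$ with the weights $m_b/(\mu_a-\mu_b)$. The right-hand sides cancel pairwise by antisymmetry, which gives Cartan's identity.

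\textbf{Reduction to $g\leq 2$ when $c=-1$.} For $c=-1$ each term of the identity is $\frac{\mu_i\mu_j-1}{\mu_i-\mu_j}$. I will parametrize the curvatures hyperbolically: $\mu=\coth\theta$ if $|\mu|>1$, $\mu=\tanh\theta$ if $|\mu|<1$, and $\mu=\pm1$ as the limiting case. With this substitution each term becomes $\pm\coth(\theta_i-\theta_j)$ or $\pm\tanh(\theta_i-\theta_j)$, depending on the types of $\mu_i$ and $\mu_j$. I then choose $i$ extremal, namely the $\mu_i$ with $|\mu_i|>1$ of largest $\theta$ (after possibly reversing the normal). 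For this choice every nonzero term has the same sign. So the identity forces the following.
\begin{itemize}
\item There is at most one curvature of each type.
\item Whenever there are two distinct curvatures, the corresponding term vanishes, i.e. $\mu_1\mu_2=1$.
\end{itemize}
Hence $g\leq 2$, and if $g=2$ then $\mu_1\mu_2=1$. I expect this sign bookkeeping across the cases $|\mu|>1$, $|\mu|<1$, $|\mu|=1$ to be the main obstacle, and I would check it carefully case by case.

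\textbf{Identification.}
\begin{itemize}
\item If $g=1$, the hypersurface is totally umbilic. Integrating the umbilicity gives a geodesic hypersphere when $|\mu|>1$, a horosphere when $|\mu|=1$, and a totally geodesic or equidistant hyperplane when $|\mu|<1$.
\item If $g=2$, the Codazzi computation above shows that both distributions $D(\mu_1)$ and $D(\mu_2)$ are integrable with totally geodesic leaves in $M^n$, so $M^n$ splits locally as a Riemannian product. Each leaf is totally umbilic in $\mathbb{H}^{n+1}$ with constant curvature. Writing $x$ in the hyperboloid model and using $\mu_1\mu_2=1$, one identifies the leaves as $\mathbb{S}^{n-m}(r)$ and $\mathbb{H}^m(\sqrt{1+r^2})$, giving the hyperbolic cylinders in the statement.
\end{itemize}
Finally, analyticity of isoparametric hypersurfaces extends the conclusion from $U$ to all of $M^n$.
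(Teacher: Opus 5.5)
Your route is Cartan's classical argument; the paper gives no proof of its own and only cites \cite{ce}. Your Cartan identity, your formula for $c+\mu_a\mu_b$, the case $g=1$ and the splitting for $g=2$ are all fine. The step that fails is the one you flagged, and it fails because of the choice of index. With your parametrization, for $c=-1$ each term $T_{ij}=\frac{\mu_i\mu_j-1}{\mu_i-\mu_j}$ equals $\coth(\theta_j-\theta_i)$ when $\mu_i,\mu_j$ are of the same type, and $\tanh(\theta_j-\theta_i)$ when they are of different types. Take $i$ to be the curvature with $|\mu_i|>1$ and largest $\theta$. Then the other terms of $\coth$-type are negative, but any $\mu_j=\tanh\theta_j$ with $\theta_j>\theta_i$ gives a positive term. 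For example, take curvatures $\coth 1$, $\tanh 2$, $0$ with $m_2=m_3$. Your chosen index is $\mu_1=\coth 1$, and there the identity reads $(m_2-m_3)\tanh 1=0$, which holds, so you get no contradiction. Reversing the normal ($\theta\mapsto-\theta$) gives the same situation. Also, if every curvature has $|\mu|<1$, your index does not exist. So the claim ``for this choice every nonzero term has the same sign'' is false, and your two bullet conclusions do not follow.

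The fix is to take the extremum over \emph{all} principal curvatures, regardless of type. First, if some $\mu_i=\pm1$, every term at $i$ equals $\mp1$, so $\mp\sum_{j\neq i}m_j=0$ is impossible when $g\geq 2$. Otherwise choose $i$ with $\theta_i$ maximal among all the $\theta_j$. Same-type terms are then $\coth(\theta_j-\theta_i)<0$, because distinct curvatures of the same type have distinct $\theta$. Different-type terms are $\tanh(\theta_j-\theta_i)\leq 0$, with equality iff $\theta_j=\theta_i$, i.e.\ $\mu_i\mu_j=1$. This gives exactly $g\leq 2$ and $\mu_1\mu_2=1$, with no normal reversal needed. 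Two smaller points. In deriving Cartan's identity you must sum over every index $j$ with $\lambda_j\neq\mu_a$: all $b\neq a$, not only ``adjacent'' ones, and not one representative per eigenspace weighted by $m_b$. This is because the cancellation pairs $h_{ijk}^2$ with $h_{ikj}^2$. Finally, since the principal curvatures are constant, the multiplicities are already constant on connected $M^n$, so neither Proposition \ref{rec} nor analyticity is needed.
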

	
	On the other hand, in $\mathbb{S}^{n+1}$, there are many more examples (see \cite{cat},\cite{ce}). M\"{u}nzner (\cite{mu})
	showed that the number $g$ of distinct principal curvatures of an isoparametric hypersurface in $\mathbb{S}^{n+1}$ must be $1, 2, 3, 4$ or $6$.
	Cartan \cite{cat} classified those with $g\leq 3$.
	\begin{theorem}\label{is0-3}(\cite{cat}\cite{ce})
		Let $x:M^n\to \mathbb{S}^{n+1}$ be an isoparametric hypersurface with $g(\leq 3)$ distinct principal curvatures, then\\
		(1) when $g=1$, $M^n$ is a great or small sphere in $\mathbb{S}^{n+1}$,\\
		(2) when $g=2$, $M^n$ is a Clifford torus
		$$\mathbb{S}^k(r)\times \mathbb{S}^{n-k}(\sqrt{1-r^2}),~~~1\leq k\leq n-1.$$
		(3) when $g=3$, $M^n$
		is a tube of constant radius over a standard Veronese embedding
		of a projective plane $FP^2$ into $\mathbb{S}^{3m+1}$,  where $F$ is the division algebra $R, \mathbb{C}$,
		$\mathbb{H}$(quaternions), $\mathbb{O}$(Cayley numbers) for $m = 1, 2, 4, 8,$ respectively.
	\end{theorem}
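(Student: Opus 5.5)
The plan is to follow Cartan's original strategy, which rests on the structure equations of Section 2 together with Cartan's fundamental identity. Throughout, we work on an isoparametric hypersurface in $\mathbb{S}^{n+1}$ with distinct constant principal curvatures $\mu_1,\dots,\mu_g$ of multiplicities $m_1,\dots,m_g$. We use a frame of unit principal vectors, which exists globally because the multiplicities are constant.

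\textbf{Step 1: Cartan's identity.} Since the $\mu_a$ are constant, the Codazzi equation (\ref{coda}) gives $h_{ij,k}=(\lambda_i-\lambda_j)\omega_{ij}(e_k)$ for $i\neq j$, and $h_{ii,k}=0$. Consequently each curvature distribution $D(\mu_a)$ is integrable and totally geodesic in $M^n$. Differentiating once more and using the Ricci identity (\ref{ricd}) with the Gauss equation $R_{ijij}=1+\lambda_i\lambda_j$, we then compute the sectional curvature of a plane $e_i\wedge e_j$ with $e_i\in D(\mu_a)$, $e_j\in D(\mu_b)$. This yields Cartan's identity
$$\sum_{b\neq a} m_b\frac{1+\mu_a\mu_b}{\mu_a-\mu_b}=0\quad\text{for each }a.$$

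\textbf{Step 2: the cases $g=1$ and $g=2$.} If $g=1$, the hypersurface is totally umbilic with constant $\lambda$, hence an open piece of a great or small sphere. If $g=2$, Cartan's identity forces $1+\mu_1\mu_2=0$. The two distributions are then both integrable and totally geodesic, and, because the cross terms in the identity vanish, $M^n$ is locally a Riemannian product of the leaves. The leaves are spheres of curvature $1+\mu_a^2$, and writing the position vector explicitly shows that $M^n$ is a piece of $\mathbb{S}^k(r)\times\mathbb{S}^{n-k}(\sqrt{1-r^2})$.

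\textbf{Step 3: the case $g=3$, multiplicities and curvatures.} After a parallel translation we may normalize $\mu_a=\cot(\theta+(a-1)\pi/3)$. Evaluating Cartan's identity, or alternatively using Münzner's result (\cite{mu}) that the multiplicities alternate and the curvatures are equally spaced in angle, shows that $m_1=m_2=m_3=m$. The task is then to show that $m\in\{1,2,4,8\}$ and that $M^n$ is a tube over a Veronese embedding.

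\textbf{Step 4: the focal submanifold (main obstacle).} We pass to the focal submanifold $V=\{\cos t\,x+\sin t\,e_{n+1}\}$ at the distance $t$ where one principal curvature blows up, so $V$ has dimension $2m$. The Codazzi data on $M^n$ translate into two facts about $V$: it is minimal, and the shape operator in every normal direction has eigenvalues $\pm\sqrt3,0$, each with multiplicity $m$. Cartan then shows that the second fundamental form of $V$ makes its tangent space carry the structure of a rank-two Jordan algebra. Concretely, the second fundamental form satisfies the relations of the product of $3\times3$ Hermitian matrices over $F$. By Hurwitz's theorem on composition algebras this forces $F=\mathbb{R},\mathbb{C},\mathbb{H},\mathbb{O}$, so $m=1,2,4,8$.

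\textbf{Step 5: rigidity.} Rigidity of submanifolds with the same (parallel) second fundamental form identifies $V$ with the standard Veronese embedding of $FP^2$ into $\mathbb{S}^{3m+1}$. Hence $M^n$ is a tube of constant radius over it.

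I expect Step 4, extracting the division-algebra structure from the algebraic constraints on the shape operators of the focal manifold, to be the genuinely hard part. There I would try first to show that $\nabla$ of the second fundamental form of $V$ vanishes; this is parallelity, which holds because the eigenvalues are constant and the multiplicities are equal. Once parallelity is established, I would invoke the classification of parallel submanifolds instead of constructing the algebra by hand.
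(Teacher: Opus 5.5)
The paper gives no proof of this theorem. It is quoted from Cartan and Cecil as background, and nothing later depends on it, so your outline has to stand on its own as a version of Cartan's proof. Steps 1 and 2 are fine.

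\textbf{Step 3.} This step misattributes the equality of the multiplicities. Let $x,y,z$ be the consecutive gaps between the angles $\theta_a$, so $x+y+z=\pi$. Cartan's identity for $g=3$ is then equivalent to $\cot x:\cot y:\cot z=m_3:m_1:m_2$, which is solvable for every choice of the $m_a$. (Citing M\"{u}nzner works only after extending the local piece to a compact isoparametric hypersurface.) The missing input is the next identity. Formula (\ref{con3-1}) with all $h_{kk,m}=0$ shows that the bilinear map $T:D(\mu_1)\times D(\mu_2)\to D(\mu_3)$, $T(e_i,e_j)=\sum_{k\in[\mu_3]}h_{ij,k}e_k$, satisfies $|T(X,Y)|^2=\tfrac{1}{2}(1+\mu_1\mu_2)(\mu_3-\mu_1)(\mu_3-\mu_2)\,|X|^2|Y|^2$. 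The constant is nonzero because the relation above forces all gaps to be $<\pi/2$. Injectivity of $T(X,\cdot)$ and the symmetry of $h_{ij,k}$ then give $m_1=m_2=m_3=m$, hence gaps $\pi/3$. Hurwitz applied to the normed map $T$ gives $m\in\{1,2,4,8\}$ already on $M^n$.

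\textbf{Step 4.} The data for $V$ are wrong. We have $\dim V=2m$, and for a unit normal $\eta$ the shape operator $A_\eta$ has eigenvalues $\cot(\pi/3)=1/\sqrt{3}$ and $\cot(2\pi/3)=-1/\sqrt{3}$, each of multiplicity $m$, with no zero eigenvalue; your list has $3m$ eigenvalues. Also, $\mathcal{H}_3(F)$ is a Jordan algebra of rank three, not two.

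\textbf{Step 5 (the genuine gap).} Up to Step 4 you only have pointwise algebra. The passage to a global identification rests on $\nabla\sigma=0$ for $V$, and your reason (constant eigenvalues, equal multiplicities) is insufficient. Indeed $M^n$ itself has constant principal curvatures of equal multiplicity, yet $h_{ij,k}\neq 0$ by the identity above, so its second fundamental form is not parallel.

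Parallelism of $V$ does hold, but for a different reason: every $A_\eta$ has exactly the two eigenvalues $\pm|\eta|/\sqrt{3}$, which is precisely what your eigenvalue list destroys. The argument runs as follows.
\begin{itemize}
\item The normals coming from $M^n$ fill an open subset of $N_pV$, so the quadratic identity $A_\eta^2=\tfrac{1}{3}|\eta|^2 I$ holds for all $\eta$.
\item Differentiating it along a normal field and subtracting its polarization with $\nabla^\perp_X\eta$ gives $(\nabla_XA)_\eta A_\eta+A_\eta(\nabla_XA)_\eta=0$.
\item Hence $C_\eta(X,Y,Z)=\langle(\nabla_XA)_\eta Y,Z\rangle$ vanishes whenever $Y,Z$ lie in the same eigenspace of $A_\eta$.
\item $C_\eta$ is totally symmetric by the Codazzi equation of $V$. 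After splitting $X,Y,Z$ into eigencomponents, every term has two arguments in a common eigenspace, so $C_\eta\equiv 0$.
\end{itemize}

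You still need the pointwise comparison with the Veronese embedding. For an orthonormal normal basis, $\sqrt{3}A_{\eta_0},\dots,\sqrt{3}A_{\eta_m}$ form a symmetric Clifford system on $\mathbb{R}^{2m}$. Such a system exists only for $m=1,2,4,8$ and is unique up to isometries of $T_pV$ and $N_pV$. Since a parallel submanifold is locally determined by its second fundamental form at one point, $V$ is an open piece of the Veronese embedding of $FP^2$, and $M^n$ is a tube over it.

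Cartan himself closes this step algebraically instead. He shows that $M^n$ lies in a level set of a harmonic cubic $\Phi$ on $\mathbb{R}^{3m+2}$ with $|\nabla\Phi|^2=9|x|^4$. He then identifies $\Phi$, via the composition algebra $T$, with the determinant-type cubic on trace-free Hermitian $3\times 3$ matrices over $F$.
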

	For  $g=4$ and $g=6$, the isoparametric hypersurfaces were classified completely  by Cecil, Chi, Dorfmeister,
	Ferus, Karcher, Jensen, Miyaoka, M\"{u}nzner,   Neher, Takagi, Takahashi,  et al (see \cite{ce}).
	
	The Laplacian of the second fundamental form $h_{ij}$ is defined to be $\Delta h_{ij}=\sum_mh_{ij,mm}$, and by Ricci identity (\ref{ricd}),
	we have
	\begin{equation*}
		\begin{split}
			\frac{1}{2}\Delta S&=\frac{1}{2}\Delta (\sum_{i,j}h_{ij}^2)=\sum_{i,j,k}h_{ij,k}^2+\sum_{i,j,m}h_{ij}h_{ij,mm}\\
			&=\sum_{i,j,k}h_{ij,k}^2+\sum_{i,j,m}h_{ij}\Big(\sum_k h_{kkij}+\sum_{m,k} h_{mk}R_{mijk}+\sum_{m,k} h_{mi}R_{mkjk}\Big)\\
			&=\sum_{i,j,k}h_{ij,k}^2+S(nc-S)-cn^2H^2+nHf_3.
		\end{split}
	\end{equation*}
	Therefore
	\begin{equation}\label{lap-1}
		\frac{1}{2}\Delta S=\sum_{i,j,k}h_{ij,k}^2+(nc-S)S-cn^2H^2+nHf_3.
	\end{equation}
	Similarly,
	\begin{equation}\label{lap-2}
		\frac{1}{3}\Delta f_3=2\sum_{i,j,k,m}h_{im,k}h_{jm,k}h_{ij}+(nc-S)f_3-ncHS+nHf_4.
	\end{equation}
	
	\section{Proof of Main Theorems}
	Let $x: M^n\to \mathbb{M}^{n+1}(c)$ be a piece of immersed CMC  hypersurface with $g$ distinct principal curvatures  in the $(n+1)$-dimensional
	space  form $\mathbb{M}^{n+1}(c)$. Since we study the local  properties of the hypersurface, by Proposition \ref{rec}
	we can assume that $g$ is constant on $M^n$. Let $\bar{\lambda}_1,\cdots,\bar{\lambda}_g$ are $g$ distinct principal curvatures with multiplicities $m_1,\cdots,m_g$ respectively.
	Thus we can choose a orthonormal basis $\{e_1,e_2,\cdots,e_n\}$  for $TM^n$ with respect to the first fundamental form $I=dx\cdot dx$,
	consisting of unit principal vectors. Thus under the basis, the second fundamental form satisfies
	\begin{equation}\label{basis-1'}
		(h_{ij})=diag(\lambda_1,\lambda_2,\cdots,\lambda_n)=diag(\underbrace{\bar{\lambda}_1,\cdots,\bar{\lambda}_1}_{m_1},
		\underbrace{\bar{\lambda}_2,\cdots,\bar{\lambda}_2}_{m_2},
		\cdots,\underbrace{\bar{\lambda}_g,\cdots,\bar{\lambda}_g}_{m_g}).
	\end{equation}

Under the unit principal vectors $\{e_1,e_2,\cdots,e_n\}$,  we define the following index set,
	$$[\bar{\lambda}_i]=\{k\in \{1,2,\cdots,n\}| ~\lambda_k=\bar{\lambda}_i\},~~i=1,2,\cdots, g.$$

By the definition of the covariant derivative of the second fundamental form
	$\sum_mh_{ijm}w_{m}=dh_{ij}+\sum_mh_{mj}w_{mi}+\sum_mh_{im}w_{mj}$ and (\ref{basis-1'}),
	we obtain the following equations,
	\begin{equation}\label{con2'}
		\begin{split}
			&e_i(\lambda_j)=e_i(h_{jj})=h_{jj,i},\\
			&(\lambda_i-\lambda_j)\omega_{ij}=\sum_mh_{ijm}w_{m},~~i\neq j.
		\end{split}
	\end{equation}
	\begin{Lemma}\label{le1}
		If $m_i\geq 2$, then $e_j(\bar{\lambda}_i)=0,~~j\in[\bar{\lambda}_i].$
	\end{Lemma}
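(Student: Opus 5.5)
This is the classical fact that a principal curvature of multiplicity at least two is Dupin. The plan is to read it off from the Codazzi equation (\ref{coda}) combined with the structure relations (\ref{con2'}).

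Since $m_i\geq 2$, for any $j\in[\bar{\lambda}_i]$ we can pick another index $k\in[\bar{\lambda}_i]$ with $k\neq j$. We then compute $h_{kk,j}$ in two ways.

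First, by (\ref{con2'}), $h_{kk,j}=e_j(\lambda_k)=e_j(\bar{\lambda}_i)$, because $\lambda_k=\bar{\lambda}_i$ identically on $M^n$ (the multiplicities are constant by Proposition \ref{rec}).

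Second, by the Codazzi equation, $h_{kk,j}=h_{kj,k}$. To evaluate $h_{kj,m}$ for $k\neq j$, we use the second line of (\ref{con2'}): $(\lambda_k-\lambda_j)\omega_{kj}=\sum_m h_{kj,m}\omega_m$. Since $k,j\in[\bar{\lambda}_i]$, the left side vanishes identically. Hence $h_{kj,m}=0$ for all $m$, and in particular $h_{kj,k}=0$.

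Combining the two computations gives $e_j(\bar{\lambda}_i)=0$ for every $j\in[\bar{\lambda}_i]$.

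The only point needing care is that the frame is smooth and diagonalizes $(h_{ij})$ with $\lambda_k\equiv\lambda_j$ on an open set. This is guaranteed by the standing assumption that $g$ and the multiplicities are constant (Proposition \ref{rec}), so the relation $\lambda_k-\lambda_j\equiv 0$ holds identically and the derivation of (\ref{con2'}) applies verbatim. No real obstacle is expected.
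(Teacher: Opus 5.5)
Your proof is correct and is essentially the paper's argument. The paper likewise picks two distinct indices in $[\bar{\lambda}_i]$, gets $h_{kj,k}=0$ from the second relation in (\ref{con2'}) because $\lambda_k=\lambda_j$, and then uses the Codazzi equation to conclude $e_j(\bar{\lambda}_i)=h_{kk,j}=h_{kj,k}=0$.
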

	\begin{proof}
		We assume that $m_1\geq 2$. There are two
		indices $i,j\in [\bar{\lambda}_1]$ and $i\neq j$. By the second equation in (\ref{con2'}),
		we have $h_{ij,i}=0$. Since $h_{ij,i}=h_{ii,j}=e_j(h_{ii})=e_j(\bar{\lambda}_1)$, thus $e_j(\bar{\lambda}_1)=0,~~j\in[\bar{\lambda}_1]$.
	\end{proof}

For $\lambda_i\neq \lambda_j$, we have the following equation by (\ref{con2'}),
	\begin{equation}\label{con2-1}
		\omega_{ij}=\sum_m\frac{h_{ij,m}}{\lambda_i-\lambda_j}w_{m},~~\lambda_i\neq \lambda_j.
	\end{equation}
	By the second covariant derivative of the second fundamental form and (\ref{con2-1}), we obtain the following equations,
	\begin{equation}\label{con2}
		\begin{split}
			&h_{ij,ij}=e_j(h_{ij,i})+2\sum_{m\notin [\lambda_i]}\frac{h_{ij,m}^2}{\lambda_m-\lambda_i}+\sum_{m\notin [\lambda_j]}\frac{h_{ii,m}h_{jj,m}}{\lambda_m-\lambda_j},\\
			&h_{ij,ji}=e_i(h_{ij,j})+2\sum_{m\notin [\lambda_j]}\frac{h_{ij,m}^2}{\lambda_m-\lambda_j}+\sum_{m\notin [\lambda_i]}\frac{h_{ii,m}h_{jj,m}}{\lambda_m-\lambda_i}.
		\end{split}
	\end{equation}
	By Ricci identity,
	\begin{equation*}
		\begin{split}
			&(\lambda_i-\lambda_j)R_{ijij}=e_j(h_{ij,i})-e_i(h_{ij,j})
			+2\sum_{m\notin [\lambda_i],m\notin [\lambda_j]}\frac{h_{ij,m}^2(\lambda_i-\lambda_j)}{(\lambda_m-\lambda_i)(\lambda_m-\lambda_j)}\\
			&+\sum_{m\notin [\lambda_i],m\notin [\lambda_j]}\frac{h_{ii,m}h_{jj,m}(\lambda_j-\lambda_i)}{(\lambda_m-\lambda_i)(\lambda_m-\lambda_j)}
			+\frac{h_{ii,i}h_{jj,i}+h_{jj,j}h_{ii,j}-2(h_{ii,j}^2+h_{jj,i}^2)}{\lambda_i-\lambda_j}.
		\end{split}
	\end{equation*}
	i.e.,
	\begin{equation}\label{con3-1}
		\begin{split}
			R_{ijij}&=\frac{e_j(h_{ij,i})-e_i(h_{ij,j})}{\lambda_i-\lambda_j}
			+\sum_{m\notin [\lambda_i],m\notin [\lambda_j]}\frac{2h_{ij,m}^2-h_{ii,m}h_{jj,m}}{(\lambda_m-\lambda_i)(\lambda_m-\lambda_j)}\\
			&+\frac{h_{ii,i}h_{jj,i}+h_{jj,j}h_{ii,j}-2(h_{ii,j}^2+h_{jj,i}^2)}{(\lambda_i-\lambda_j)^2},~~\lambda_i\neq \lambda_j.
		\end{split}
	\end{equation}

Since the $k$-order mean curvatures are constant, then,
	\begin{equation}\label{curva-1}
		\begin{split}
			&m_1\bar{\lambda}_1+m_2\bar{\lambda}_2+\cdots+m_g\bar{\lambda}_g=c_1,\\
			&m_1\bar{\lambda}_1^2+m_2\bar{\lambda}_2^2+\cdots+m_g\bar{\lambda}_g^2=c_2,\\
			&\cdots\cdots\cdots\cdots\cdots\cdots\\
			&m_1\bar{\lambda}_1^{r}+m_2\bar{\lambda}_2^{r}+\cdots+m_g\bar{\lambda}_g^{r}=c_{r},
		\end{split}
	\end{equation}
	where $c_1,c_2,\cdots,c_{r}$ are constant.
	
	By the equation system (\ref{curva-1}), if $r\geq g$,  then  it is easy to prove that the principal curvatures $\bar{\lambda}_1,\cdots,\bar{\lambda}_g$ are constant. Thus we have the following results,
	\begin{PROPOSITION}\label{pro1}
		Let $x: M^n\to \mathbb{M}^{n+1}(c),$ be a piece of immersed CMC  hypersurface with $g$ distinct principal curvatures in the $(n+1)$-dimensional
		space  form $\mathbb{M}^{n+1}(c)$. If the following conditions are satisfies:\\
		(1) $H_k~ (k=2,\cdots, r)$ are constant,\\
		(2)  $r\geq g$,\\
		then $M^n$ is an isoparametric hypersurface.
	\end{PROPOSITION}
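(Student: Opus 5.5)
The plan is to work on the open dense set given by Proposition \ref{rec}, where the multiplicities $m_1,\dots,m_g$ are constant and the distinct principal curvatures $\bar\lambda_1,\dots,\bar\lambda_g$ are smooth, pairwise distinct functions. Since $H$ is constant and $H_k$ is constant for $2\le k\le r$ with $r\ge g$, the system (\ref{curva-1}) holds with at least its first $g$ equations, namely $\sum_{i=1}^g m_i\bar\lambda_i^k=c_k$ for $k=1,\dots,g$. I would differentiate these $g$ identities along an arbitrary tangent vector $X$ to obtain the linear homogeneous system
\begin{equation*}
\sum_{i=1}^g k\,m_i\bar\lambda_i^{k-1}X(\bar\lambda_i)=0,\qquad k=1,\dots,g,
\end{equation*}
in the unknowns $X(\bar\lambda_1),\dots,X(\bar\lambda_g)$.

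The coefficient matrix is $\mathrm{diag}(1,2,\dots,g)\cdot V\cdot \mathrm{diag}(m_1,\dots,m_g)$, where $V=(\bar\lambda_i^{k-1})_{k,i}$ is the Vandermonde matrix. Its determinant is $g!\,\prod_i m_i\prod_{i<j}(\bar\lambda_j-\bar\lambda_i)$, which vanishes nowhere on the open set because the $\bar\lambda_i$ are distinct and every $m_i\ge1$. Hence $X(\bar\lambda_i)=0$ for all $i$ and all $X$. So each $\bar\lambda_i$ is locally constant, and on a connected piece it is constant.

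It remains to pass from the open dense set to all of $M^n$. Here I would use that $M^n$ is a piece, so by Proposition \ref{rec} we may assume $g$ is constant on $M^n$, as the paper does at the start of this section. Alternatively, the elementary symmetric functions of the $\lambda$'s are continuous on $M^n$ and locally constant on a dense open set, so they are determined by the constants $c_k$ together with the multiplicities, and the principal curvatures are therefore constant on each component. Either way $M^n$ is isoparametric.

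The argument is essentially routine. The only point needing care, and the expected main obstacle, is ensuring that the distinctness of the $\bar\lambda_i$ and the constancy of the multiplicities hold on the region considered, so that the Vandermonde determinant is nonzero there. That is exactly what Proposition \ref{rec} supplies.
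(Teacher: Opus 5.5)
Your proposal is correct and follows the paper's route: the paper only remarks that constancy follows easily from the system (\ref{curva-1}), and the differentiated Vandermonde system you use is exactly (\ref{cond2}), whose right-hand side vanishes once $H_g$ is also constant, so $X(\bar\lambda_i)=0$ for all $i$. Your first way of globalizing (taking $g$ constant on the piece, as the paper does) is enough; your second alternative would also need the observation that only finitely many multisets of principal curvatures are compatible with $c_1,\dots,c_r$, because a continuous function that is merely locally constant on a dense open set need not be constant.
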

	
	Now we assume that $r=g-1$, i.e.,
	\begin{equation}\label{cond1}
		\begin{split}
			&m_1\bar{\lambda}_1+m_2\bar{\lambda}_2+\cdots+m_g\bar{\lambda}_g=c_1,\\
			&m_1\bar{\lambda}_1^2+m_2\bar{\lambda}_2^2+\cdots+m_g\bar{\lambda}_g^2=c_2,\\
			&\cdots\cdots\cdots\cdots\cdots\cdots\\
			&m_1\bar{\lambda}_1^{g-1}+m_2\bar{\lambda}_2^{g-1}+\cdots+m_g\bar{\lambda}_g^{g-1}=c_{g-1},\\
			&m_1\bar{\lambda}_1^{g}+m_2\bar{\lambda}_2^{g}+\cdots+m_g\bar{\lambda}_g^{g}=H_g.
		\end{split}
	\end{equation}
	where $c_1,c_2,\cdots,c_{g-1}$ are constant.  Differentiating the equations in (\ref{cond1}) by $e_k$  for each $k=1,2,\cdots, n$, we get
	\begin{equation}\label{cond2}
		\left(\begin{array}{cccc}
			1&1 &\cdots&1\\
			\bar{\lambda}_1&\bar{\lambda}_2 &\cdots&\bar{\lambda}_g\\
			\vdots&\vdots&\ddots&\vdots\\
			\bar{\lambda}_1^{g-2}&\bar{\lambda}_2^{g-2}&\cdots&\bar{\lambda}_g^{g-2}\\
			\bar{\lambda}_1^{g-1}&\bar{\lambda}_2^{g-1}&\cdots&\bar{\lambda}_g^{g-1}\\
		\end{array}\right)
		\left(\begin{array}{c}
			m_1e_k(\bar{\lambda}_1)\\
			m_2e_k(\bar{\lambda}_2)\\
			\vdots\\
			m_{g-1}e_k(\bar{\lambda}_{g-1})\\
			m_ge_k(\bar{\lambda}_{g})\\
		\end{array}\right)=
		\left(\begin{array}{c}
			0\\
			0\\
			\vdots\\
			0\\
			\frac{e_k(H_g)}{g}\\
		\end{array}\right).
	\end{equation}
	From (\ref{cond2}), we can obtain
	\begin{equation}\label{cond3}
		m_je_k(\bar{\lambda}_j)=\frac{(-1)^{g+1}}{g}\frac{e_k(H_g)}{\prod_{l=1;l\neq j}^g(\bar{\lambda}_{l}-\bar{\lambda}_{j})},
	\end{equation}
	and
	\begin{equation}\label{cond4}
		m_je_k(\bar{\lambda}_j)=\frac{\prod_{l=1;l\neq i}^g(\bar{\lambda}_{l}-\bar{\lambda}_{i})}{\prod_{l=1;l\neq j}^g(\bar{\lambda}_{l}-\bar{\lambda}_{j})}m_ie_k(\bar{\lambda}_i),~~1\leq i,j\leq g,~~1\leq k\leq n.
	\end{equation}

	Let $P(\bar{\lambda}_i)=m_i\prod_{l=1;l\neq i}^g(\bar{\lambda}_l-\bar{\lambda}_i)$, then (\ref{cond4}) can write as
	\begin{equation}\label{con41}
		P(\bar{\lambda}_j)e_k(\bar{\lambda}_j)=P(\bar{\lambda}_i)e_k(\bar{\lambda}_i), ~~1\leq i,j\leq g,~~1\leq k\leq n.
	\end{equation}

	Differentiating the equations in (\ref{con41}) by $e_k$  for each $k=1,2,\cdots, n$, we obtain
\begin{equation}\label{cond5}
		\begin{split}
e_k[e_k(\bar{\lambda}_j)]&=e_k(\bar{\lambda}_i)^2\Big[\sum_{l\neq i}\frac{P(\bar{\lambda}_i)-P(\bar{\lambda}_l)}{P(\bar{\lambda}_l)(\bar{\lambda}_l-\bar{\lambda}_i)}-\sum_{l\neq j}\frac{P(\bar{\lambda}_i)(P(\bar{\lambda}_j)-P(\bar{\lambda}_l))}{P(\bar{\lambda}_l)P(\bar{\lambda}_j)(\bar{\lambda}_l
-\bar{\lambda}_j)}\Big]\frac{P(\bar{\lambda}_i)}{P(\bar{\lambda}_j)}\\
			&+\frac{P(\bar{\lambda}_i)}{P(\bar{\lambda}_j)}e_k[e_k(\bar{\lambda}_i)],~~1\leq i,j\leq g,~~1\leq k\leq n.
		\end{split}
	\end{equation}

{\bf Proof of Theorem \ref{th2}}. The hypersurface is Dupin hypersurface if and only if
	$$e_j(\bar{\lambda}_i)=0,~~j\in[\bar{\lambda}_i],~~1\leq i\leq g.$$
	If the hypersurface $M^n$ is Dupin, combining the equation (\ref{cond4}), we get
	$$e_j(\bar{\lambda}_i)=0,~,~~1\leq i,j \leq g.$$
	Thus we finish the proof of Theorem \ref{th2}.

	Lemma \ref{le1} implies that if a principal curvature $\bar{\lambda}_i$ is multiple, then the principal curvature is a Dupin principal curvature. Thus Corollary \ref{cor1} is proved by Theorem \ref{th2}.
	
{\bf Proof of Theorem \ref{th3}}.  By the results in \cite{chli}, We can assume that the number of the distinct principal curvatures satisfies $g\geq 4$. Due to the result of Corollary \ref{cor1}, to prove Theorem \ref{th3}, we need to prove the case that there exist  $g-1$ principal curvatures  that are Dupin principal curvatures. Thus
		we can assume that under the orthonormal basis $\{e_1,e_2,\cdots,e_n\}$,
		$$(h_{ij})=diag(\lambda_1,\lambda_2,\cdots,\lambda_n)=diag(\bar{\lambda}_1,
		\underbrace{\bar{\lambda}_2,\cdots,\bar{\lambda}_2}_{m_2},
		\cdots,\underbrace{\bar{\lambda}_g,\cdots,\bar{\lambda}_g}_{m_g}),$$
		
		By Lemma \ref{le1} and the equation (\ref{cond4}),  we get
		\begin{equation}\label{pro1-1}
			e_j(\bar{\lambda}_1)=e_j(\bar{\lambda}_2)=\cdots=e_j(\bar{\lambda}_g)=0,~~2\leq j\leq n.
		\end{equation}
		
		We need to prove that
		\begin{equation}\label{pro1-11}
			e_1(\bar{\lambda}_1)=\cdots=e_1(\bar{\lambda}_g)=0.
		\end{equation}
		
		Next  we prove the equation (\ref{pro1-11}) by contradiction. We assume that $$e_1(\bar{\lambda}_1)\neq 0,\cdots,e_1(\bar{\lambda}_g)\neq 0.$$
		Now fix index $i,j>1$ such that $\lambda_i\neq\lambda_j$, using the second covariant derivative of the second fundamental form and (\ref{con2-1}), we obtain the following equations,
		\begin{equation}\label{pro1-2}
			\begin{split}
				&h_{11,ij}=2\sum_{m\neq 1}\frac{h_{m1,i}h_{m1,j}}{\lambda_m-\lambda_1}+\frac{h_{11,1}h_{1i,j}}{\lambda_1-\lambda_i},\\
				&h_{11,ji}=2\sum_{m\neq 1}\frac{h_{m1,i}h_{m1,j}}{\lambda_m-\lambda_1}+\frac{h_{11,1}h_{1i,j}}{\lambda_1-\lambda_j}.
			\end{split}
		\end{equation}
		Combining Ricci identity and (\ref{pro1-2}), we obtain
		\begin{equation*}
			\frac{h_{11,1}h_{1i,j}}{(\lambda_1-\lambda_i)(\lambda_1-\lambda_j)}=0.
		\end{equation*}
		Since $h_{11,1}=e_1(\lambda_1)\neq 0$, so
		\begin{equation}\label{pro1-4}
			h_{1i,j}=0,~~i,j>1, \lambda_i\neq\lambda_j.
		\end{equation}
		By (\ref{con3-1}) and (\ref{con41}), we have
		\begin{equation}\label{con3-11}
			\begin{split}
				R_{1i1i}&=\frac{e_1(h_{ii,1})}{\lambda_i-\lambda_1}+\frac{h_{11,1}h_{ii,1}-2h_{ii,1}^2}{(\lambda_i-\lambda_1)^2}\\
				&=\frac{e_1e_1(\bar{\lambda}_i)}{\lambda_i-\bar{\lambda}_1}
				 +\frac{P(\bar{\lambda}_1)P(\bar\lambda_i)-2P(\bar{\lambda}_1)^2}{P(\bar\lambda_i)^2(\lambda_i-\bar{\lambda}_1)^2}e_1(\bar{\lambda}_1)^2,~~i>1.
			\end{split}
		\end{equation}
		
		Since $g\geq 4$, there exist at least three multiple principal curvatures $\bar{\lambda}_2, \bar{\lambda}_3,\bar{ \lambda}_4$. Fixed index
		$i_1\in [\bar{\lambda}_2],~i_2\in [\bar{\lambda}_3],~i_3\in [\bar{\lambda}_4]$, by (\ref{con3-11}),
		\begin{equation}\label{con3-12}
			\begin{split}
				&e_1e_1(\bar{\lambda}_2)=(\bar{\lambda}_2-\bar{\lambda}_1)R_{1i_11i_1}
				 +\frac{P(\bar{\lambda}_1)P(\bar{\lambda}_2)-2P(\bar{\lambda}_1)^2}{P(\bar{\lambda}_2)^2(\bar{\lambda}_1-\bar{\lambda}_2)}e_1(\bar{\lambda}_1)^2;\\
				&e_1e_1(\bar{\lambda}_3)=(\bar{\lambda}_3-\bar{\lambda}_1)R_{1i_21i_2}
				 +\frac{P(\bar{\lambda}_1)P(\bar{\lambda}_3)-2P(\bar{\lambda}_1)^2}{P(\bar{\lambda}_3)^2(\bar{\lambda}_1-\bar{\lambda}_3)}e_1(\bar{\lambda}_1)^2;\\
				&e_1e_1(\bar{\lambda}_4)=(\bar{\lambda}_4-\bar{\lambda}_1)R_{1i_31i_3}
				 +\frac{P(\bar{\lambda}_1)P(\bar{\lambda}_4)-2P(\bar{\lambda}_1)^2}{P(\bar{\lambda}_4)^2(\bar{\lambda}_1-\bar{\lambda}_4)}e_1(\bar{\lambda}_1)^2.
			\end{split}
		\end{equation}
		By (\ref{cond5}), we have the following equations,
		\begin{equation}\label{con3-13}
			\begin{split}
				&P(\bar{\lambda}_i)e_1e_1(\bar{\lambda}_i)=P(\bar{\lambda}_j)e_1e_1(\bar{\lambda}_j)\\
				&-\frac{P(\bar{\lambda}_j)P(\bar{\lambda}_1)^2e_1(\bar{\lambda}_1)^2}{P(\bar{\lambda}_i)}\Big[\sum_{m\neq j}\frac{P(\bar{\lambda}_m)-P(\bar{\lambda}_j)}{P(\bar{\lambda}_m)P(\bar{\lambda}_j)(\bar{\lambda}_m-\bar{\lambda}_j)}
				-\sum_{m\neq i}\frac{P(\bar{\lambda}_m)-P(\bar{\lambda}_i)}{P(\bar{\lambda}_m)P(\bar{\lambda}_i)(\bar{\lambda}_m-\bar{\lambda}_i)}\Big].
			\end{split}
		\end{equation}
		Combining (\ref{con3-12}) and (\ref{con3-13}), we get
		\begin{equation}\label{con3-14}
			\begin{split}
				&P(\bar{\lambda}_2)(\bar{\lambda}_2-\bar{\lambda}_1)R_{1i_11i_1}+
				 \frac{P(\bar{\lambda}_1)P(\bar{\lambda}_2)-2P(\bar{\lambda}_1)^2}{P(\bar{\lambda}_2)(\bar{\lambda}_1-\bar{\lambda}_2)}e_1(\bar{\lambda}_1)^2\\
				&=P(\bar{\lambda}_3)(\bar{\lambda}_3-\bar{\lambda}_1)R_{1i_21i_2}+
				 \frac{P(\bar{\lambda}_1)P(\bar{\lambda}_3)-2P(\bar{\lambda}_1)^2}{P(\bar{\lambda}_3)(\bar{\lambda}_1-\bar{\lambda}_3)}e_1(\bar{\lambda}_1)^2\\
				&-\frac{P(\bar{\lambda}_3)P(\bar{\lambda}_1)^2e_1(\bar{\lambda}_1)^2}{P(\bar{\lambda}_2)}\Big[\sum_{m\neq 3}\frac{P(\bar{\lambda}_m)-P(\bar{\lambda}_3)}{P(\bar{\lambda}_m)P(\bar{\lambda}_3)(\bar{\lambda}_m-\bar{\lambda}_3)}
				-\sum_{m\neq 2}\frac{P(\bar{\lambda}_m)-P(\bar{\lambda}_2)}{P(\bar{\lambda}_m)P(\bar{\lambda}_2)(\bar{\lambda}_m-\bar{\lambda}_2)}\Big],\\
			\end{split}
		\end{equation}
		and
		\begin{equation}\label{con3-15}
			\begin{split}
				&P(\bar{\lambda}_2)(\bar{\lambda}_2-\bar{\lambda}_1)R_{1i_11i_1}+
				 \frac{P(\bar{\lambda}_1)P(\bar{\lambda}_2)-2P(\bar{\lambda}_1)^2}{P(\bar{\lambda}_2)(\bar{\lambda}_1-\bar{\lambda}_2)}e_1(\bar{\lambda}_1)^2\\
				&=P(\bar{\lambda}_4)(\bar{\lambda}_4-\bar{\lambda}_1)R_{1i_31i_3}+
				 \frac{P(\bar{\lambda}_1)P(\bar{\lambda}_4)-2P(\bar{\lambda}_1)^2}{P(\bar{\lambda}_4)(\bar{\lambda}_1-\bar{\lambda}_4)}e_1(\bar{\lambda}_1)^2\\
				&-\frac{P(\bar{\lambda}_4)P(\bar{\lambda}_1)^2e_1(\bar{\lambda}_1)^2}{P(\bar{\lambda}_2)}\Big[\sum_{m\neq 4}\frac{P(\bar{\lambda}_m)-P(\bar{\lambda}_4)}{P(\bar{\lambda}_m)P(\bar{\lambda}_4)(\bar{\lambda}_m-\bar{\lambda}_4)}
				-\sum_{m\neq 2}\frac{P(\bar{\lambda}_m)-P(\bar{\lambda}_2)}{P(\bar{\lambda}_m)P(\bar{\lambda}_2)(\bar{\lambda}_m-\bar{\lambda}_2)}\Big].
			\end{split}
		\end{equation}
		Let
		\begin{equation*}
			\begin{split}
				Q(\bar{\lambda}_i,\bar{\lambda}_j)
				&=\frac{P(\bar{\lambda}_i)-2P(\bar{\lambda}_1)}{P(\bar{\lambda}_1)P(\bar{\lambda}_i)(\bar{\lambda}_1-\bar{\lambda}_i)}
				-\frac{P(\bar{\lambda}_j)-2P(\bar{\lambda}_1)}{P(\bar{\lambda}_1)P(\bar{\lambda}_j)(\bar{\lambda}_1-\bar{\lambda}_j)}\\
				&-\frac{P(\bar{\lambda}_j)}{P(\bar{\lambda}_i)}(\sum_{m\neq i}\frac{P(\bar{\lambda}_m)-P(\bar{\lambda}_i)}{P(\bar{\lambda}_m)P(\bar{\lambda}_i)(\bar{\lambda}_m-\bar{\lambda}_i)}
				-\sum_{m\neq j}\frac{P(\bar{\lambda}_m)-P(\bar{\lambda}_j)}{P(\bar{\lambda}_m)P(\bar{\lambda}_j)(\bar{\lambda}_m-\bar{\lambda}_j)}).
			\end{split}
		\end{equation*}
		By (\ref{con3-14}) and (\ref{con3-15}), we have
		 $$Q(\bar{\lambda}_2,\bar{\lambda}_3)P(\bar{\lambda}_1)^2e_1(\bar{\lambda}_1)^2=P(\bar{\lambda}_3)(\bar{\lambda}_3-\bar{\lambda}_1)R_{1i_21i_2}
		-P(\bar{\lambda}_2)(\bar{\lambda}_2-\bar{\lambda}_1)R_{1i_11i_1},$$
		 $$Q(\bar{\lambda}_2,\bar{\lambda}_4)P(\bar{\lambda}_1)^2e_1(\bar{\lambda}_1)^2=P(\bar{\lambda}_4)(\bar{\lambda}_4-\bar{\lambda}_1)R_{1i_31i_3}
		-P(\bar{\lambda}_2)(\bar{\lambda}_2-\bar{\lambda}_1)R_{1i_11i_1},$$
		which implies that
		\begin{equation}\label{con3-16}
			\begin{split}
				&Q(\bar{\lambda}_2,\bar{\lambda}_3)\Big[P(\bar{\lambda}_4)(\bar{\lambda}_4-\bar{\lambda}_1)R_{1i_31i_3}
				-P(\bar{\lambda}_2)(\bar{\lambda}_2-\bar{\lambda}_1)R_{1i_11i_1}\Big]\\
				&=Q(\bar{\lambda}_2,\bar{\lambda}_4)\Big[P(\bar{\lambda}_3)(\bar{\lambda}_3-\bar{\lambda}_1)R_{1i_21i_2}
				-P(\bar{\lambda}_2)(\bar{\lambda}_2-\bar{\lambda}_1)R_{1i_11i_1}\Big].
			\end{split}
		\end{equation}
		
		we obtain the  equation system (\ref{curva-1}) and (\ref{con3-16}) of $g$ algebraic equations about the $g$ distinct principal curvature $\bar{\lambda}_1,\bar{\lambda}_2,\cdots, \bar{\lambda}_g$. Because the equations in the system [\ref{curva-1}] are symmetric functions of different orders, thus these equations are independent with each other. On the other hands, since the equation (\ref{con3-16}) about variables $\{\lambda_1, \cdots, \lambda_n\}$ is an asymmetric algebraic equation,  the equation (\ref{con3-16})  can not  be generated by the equation system [\ref{curva-1}]. Thus from the  equation system [(\ref{curva-1})-(\ref{con3-16})], we know that  the $g$ distinct principal curvature $\bar{\lambda}_1,\bar{\lambda}_2,\cdots, \bar{\lambda}_g$ are constant. So the hypersurface $M^n$ is an isoparametric hypersurface.
Combining  Lemma \ref{le1}, we finish the proof of Theorem \ref{th3}.

\textbf{Proof of Theorem\ref{th4}.}
		By the results in \cite{chli}, we need to prove $g=4$. We can assume that under the orthonormal basis $\{e_1,e_2,\cdots,e_n\}$,
		$$(h_{ij})=diag(\lambda_1,\lambda_2,\cdots,\lambda_n)=diag(\underbrace{\bar{\lambda}_1,\cdots, \bar{\lambda}_1}_{m_1},
		\underbrace{ \bar{\lambda}_2,\cdots,\bar{\lambda}_2}_{m_2},
		\cdots,\underbrace{\bar{\lambda}_4,\cdots,\bar{\lambda}_4}_{m_4}).$$
		We divide the proof of Theorem\ref{th4} into four cases:\\
		(i) {\bf Case 1}, $m_1\geq 2, m_2\geq 2, m_3\geq 2, m_4\geq 2$,\\
		(ii) {\bf Case 2}, $m_1=1, m_2\geq 2, m_3\geq 2, m_4\geq 2$,\\
		(iii) {\bf Case 3}, $m_1=m_2=1, m_3\geq 2, m_4\geq 2$,\\
		(iv) {\bf Case 4}, $m_1=m_2=m_3=1,  m_4\geq 2$,\\
		
		{\bf Case 1} is proved by Corollary \ref{cor1}.
		
		{\bf Case 2} is proved by Theorem \ref{th3}.
		
		Now we prove {\bf Case 3}.
		$$(h_{ij})=diag(\lambda_1,\lambda_2,\cdots,\lambda_n)=diag(\bar{\lambda}_1, \bar{\lambda}_2,
		\underbrace{ \bar{\lambda}_3,\cdots,\bar{\lambda}_3}_{m_3},\underbrace{\bar{\lambda}_4,\cdots,\bar{\lambda}_4}_{m_4}).$$
		Since $m_3\geq 2,,m_4\geq 2$, by Lemma \ref{le1} and the equation (\ref{cond4}),  we get
		\begin{equation}\label{pro1-1}
			e_j(\bar{\lambda}_1)=e_j(\bar{\lambda}_2)=\cdots=e_j(\bar{\lambda}_4)=0,~~3\leq j\leq n.
		\end{equation}
		To prove Theorem\ref{th4}, we need to prove that
		$$e_1(\bar{\lambda}_1)=\cdots=e_1(\bar{\lambda}_4)=0,~~e_2(\bar{\lambda}_1)=\cdots=e_2(\bar{\lambda}_4)=0.$$
		Next we prove it by contradiction. We assume that $$e_1(\bar{\lambda}_1)\neq 0,\cdots,e_1(\bar{\lambda}_g)\neq 0,~~e_2(\bar{\lambda}_1)\neq0,\cdots,e_2(\bar{\lambda}_g)\neq 0.$$
		
		We fix the index $i\in [\bar{\lambda}_3], j\in [\bar{\lambda}_4]$,
		by the second covariant derivative of the second fundamental form and (\ref{con2-1}), we obtain the following equations,
		\begin{equation*}
			\begin{split}
				&h_{11,ij}=2\sum_{m\neq 1}\frac{h_{m1,i}h_{m1,j}}{\lambda_m-\lambda_1}+\frac{h_{11,1}h_{1i,j}}{\lambda_1-\lambda_i}+\frac{h_{11,2}h_{2i,j}}{\lambda_2-\lambda_i},\\
				&h_{11,ji}=2\sum_{m\neq 1}\frac{h_{m1,i}h_{m1,j}}{\lambda_m-\lambda_1}+\frac{h_{11,1}h_{1i,j}}{\lambda_1-\lambda_j}+\frac{h_{11,2}h_{2i,j}}{\lambda_2-\lambda_j}.
			\end{split}
		\end{equation*}
		By Ricci identity, we get
		\begin{equation}\label{pro2-1}
			 \frac{h_{11,1}h_{1i,j}}{(\lambda_1-\lambda_i)(\lambda_1-\lambda_j)}+\frac{h_{11,2}h_{2i,j}}{(\lambda_2-\lambda_i)(\lambda_2-\lambda_j)}=0.
		\end{equation}
		Since $h_{ij,j}=h_{ij,i}=0$, combining  (\ref{con3-1}), we obtain
		\begin{equation}\label{pro2-2}
			\begin{split}
				R_{ijij}&=\frac{2h_{1i,j}^2}{(\bar{\lambda}_1-\bar{\lambda}_3)(\bar{\lambda}_1-\bar{\lambda}_4)}+
				\frac{2h_{2i,j}^2}{(\bar{\lambda}_2-\bar{\lambda}_3)(\bar{\lambda}_2-
					\bar{\lambda}_4)}\\
				&-\frac{h_{ii,1}h_{jj,1}}{(\bar{\lambda}_1-\bar{\lambda}_3)(\bar{\lambda}_1-\bar{\lambda}_4)}
				-\frac{h_{ii,2}h_{jj,2}}{(\bar{\lambda}_2-\bar{\lambda}_3)(\bar{\lambda}_2-\bar{\lambda}_4)}.
			\end{split}
		\end{equation}
		We consider the matrix $G=(h_{1i,j}),~~i\in [\bar{\lambda}_3], j\in [\bar{\lambda}_4]$ .Since $m_3\geq 2, m_4\geq 2$, using matrix' singular value decomposition, we can choose a basis  in $D(\bar{\lambda}_3)$  and a
		basis in $D(\bar{\lambda}_4)$ such that
		\begin{equation}\label{ma}
			G=(h_{1i,j})=\left(\begin{matrix}\Lambda&0\\
				0&0\end{matrix}\right),
		\end{equation}
		where $\Lambda=diag(c_1,\cdots,c_t)$ is a diagonal matrix.
		
		Since $h_{11,1}h_{11,2}=e_1(\lambda_1)e_2(\lambda_1)\neq 0$, from (\ref{pro2-1}), we obtain that if $h_{1i,j}=0$, the $h_{2i,j}=0$.
		From (\ref{ma}), there exist index $i_0\in [\bar{\lambda}_3], j_0\in [\bar{\lambda}_4]$ such that $h_{1i_0,j_0}=0$, and $h_{2i_0,j_0}=0$.
		Thus combining equation (\ref{pro2-2}), using $R_{ijij}=R_{i_0j_0i_0j_0}$, we have
		\begin{equation}\label{pro2-3}
			\frac{h_{1i,j}^2}{(\bar{\lambda}_1-\bar{\lambda}_3)(\bar{\lambda}_1-\bar{\lambda}_4)}+
			\frac{h_{2i,j}^2}{(\bar{\lambda}_2-\bar{\lambda}_3)(\bar{\lambda}_2-
				\bar{\lambda}_4)}=0,~~\forall~~i\in [\bar{\lambda}_3], j\in [\bar{\lambda}_4].
		\end{equation}
		Hence we obtain $h_{1ij}=h_{2ij}=0$.
		
		From (\ref{pro2-2}), combining (\ref{con41}) we can get
		\begin{equation}\label{pro2-12}
			\begin{split}
				R_{ijij}&=-\frac{h_{ii,1}h_{jj,1}}{(\bar{\lambda}_1-\bar{\lambda}_3)(\bar{\lambda}_1-\bar{\lambda}_4)}
				-\frac{h_{ii,2}h_{jj,2}}{(\bar{\lambda}_2-\bar{\lambda}_3)(\bar{\lambda}_2-\bar{\lambda}_4)}\\
				&=\frac{-P(\bar{\lambda}_1)^2}{P(\bar{\lambda}_3)P(\bar{\lambda}_4)}\Big[\frac{e_1(\bar{\lambda}_1)^2}
				{(\bar{\lambda}_1-\bar{\lambda}_3)(\bar{\lambda}_1-\bar{\lambda}_4)}+
				\frac{e_2(\bar{\lambda}_1)^2}{(\bar{\lambda}_2-\bar{\lambda}_3)(\bar{\lambda}_2-
					\bar{\lambda}_4)}\Big].
			\end{split}
		\end{equation}
		From (\ref{con3-1}), for $i\in [\bar{\lambda}_3]$,
		\begin{equation}\label{pro2-6} R_{1i1i}=\frac{e_1(h_{ii,1})}{\bar{\lambda}_3-\bar{\lambda}_1}+\frac{2h_{1i,2}^2}{(\bar{\lambda}_2-\bar{\lambda}_1)(\bar{\lambda}_2-\bar{\lambda}_3)}
			-\frac{h_{ii,2}h_{11,2}}{(\bar{\lambda}_2-\bar{\lambda}_1)(\bar{\lambda}_2-\bar{\lambda}_3)}
			+\frac{h_{11,1}h_{ii,1}-2h_{ii,1}^2}{(\bar{\lambda}_3-\bar{\lambda}_1)^2}.
		\end{equation}

		Since $dim D_{\bar{\lambda}_3}\geq 2$, we can assume that $h_{12,i_0}=0$ for some $i_0\in [\bar{\lambda}_3]$, otherwise, we can reselect $\{e_3,\cdots,e_{2+m_2}\}$ such that $h_{12,4}=0$. In fact, defining
$$X=\sum_{i\in [\bar{\lambda}_3]}h_{12,i}e_i.$$
If $X\neq 0$, taking the orthonormal frame $\bar{e}_3=\frac{X}{|X|}, \bar{e}_4, \cdots, \bar{e}_3\in D_{\bar{\lambda}_3}$, then $h_{12,4}=0.$
		\begin{equation*}
			R_{1i_01i_0}=\frac{e_1(h_{ii,1})}{\bar{\lambda}_3-\bar{\lambda}_1}
			-\frac{h_{ii,2}h_{11,2}}{(\bar{\lambda}_2-\bar{\lambda}_1)(\bar{\lambda}_2-\bar{\lambda}_3)}
			+\frac{h_{11,1}h_{ii,1}-2h_{ii,1}^2}{(\bar{\lambda}_3-\bar{\lambda}_1)^2}.
		\end{equation*}
		And $R_{1i1i}=R_{1i_01i_0}$, thus we get
		\begin{equation}\label{pro2-7}
			h_{12,i}=0, ~~\forall i\in [\bar{\lambda_3}].
		\end{equation}
		Similarly,
		\begin{equation}
			h_{12,j}=0, ~~\forall j\in [\bar{\lambda_4}].
		\end{equation}

Thus we obtain,
\begin{equation*}
			R_{1i1i}=\frac{e_1(h_{ii,1})}{\bar{\lambda}_3-\bar{\lambda}_1}
			-\frac{h_{ii,2}h_{11,2}}{(\bar{\lambda}_2-\bar{\lambda}_1)(\bar{\lambda}_2-\bar{\lambda}_3)}
			+\frac{h_{11,1}h_{ii,1}-2h_{ii,1}^2}{(\bar{\lambda}_3-\bar{\lambda}_1)^2},
		\end{equation*}
and
\begin{equation*}
			R_{1j1j}=\frac{e_1(h_{jj,1})}{\bar{\lambda}_4-\bar{\lambda}_1}
			-\frac{h_{jj,2}h_{11,2}}{(\bar{\lambda}_2-\bar{\lambda}_1)(\bar{\lambda}_2-\bar{\lambda}_4)}
			+\frac{h_{11,1}h_{jj,1}-2h_{jj,1}^2}{(\bar{\lambda}_4-\bar{\lambda}_1)^2}.
		\end{equation*}

		Combining (\ref{con41}),  we have
		\begin{equation}\label{pro2-5}
			\begin{split}
				e_1e_1(\bar{\lambda}_3)&=(\bar{\lambda}_3-\bar{\lambda}_1)R_{1i1i}
				 +\frac{P(\bar{\lambda}_1)(\bar{\lambda}_3-\bar{\lambda}_1)}{P(\bar{\lambda}_3)(\bar{\lambda}_2-\bar{\lambda}_1)(\bar{\lambda}_2-\bar{\lambda}_3)}e_2(\bar{\lambda}_1)^2\\
				&+\frac{P(\bar{\lambda}_1)P(\bar{\lambda}_3)-2P(\bar{\lambda}_1)^2}{P(\bar{\lambda}_3)^2(\bar{\lambda}_1-\bar{\lambda}_3)}
				e_1(\bar{\lambda}_1)^2,
			\end{split}
		\end{equation}
		and
		\begin{equation}\label{pro2-6}
			\begin{split}
				e_1e_1(\bar{\lambda}_4)&=(\bar{\lambda}_4-\bar{\lambda}_1)R_{1j1j}
				 +\frac{P(\bar{\lambda}_1)(\bar{\lambda}_4-\bar{\lambda}_1)}{P(\bar{\lambda}_4)(\bar{\lambda}_2-\bar{\lambda}_1)(\bar{\lambda}_2-\bar{\lambda}_4)}e_2(\bar{\lambda}_1)^2\\
				&+\frac{P(\bar{\lambda}_1)P(\bar{\lambda}_4)-2P(\bar{\lambda}_1)^2}{P(\bar{\lambda}_4)^2(\bar{\lambda}_1-\bar{\lambda}_4)}
				e_1(\bar{\lambda}_1)^2.
			\end{split}
		\end{equation}
		
		Similarly, we have
		\begin{equation}\label{pro2-7}
			\begin{split}
				e_2e_2(\bar{\lambda}_3)&=(\bar{\lambda}_3-\bar{\lambda}_2)R_{2i2i}
				+\frac{P(\bar{\lambda}_1)^2(\bar{\lambda}_3-\bar{\lambda}_2)}{P(\bar{\lambda}_2)P(\bar{\lambda}_3)(\bar{\lambda}_1
					-\bar{\lambda}_2)(\bar{\lambda}_1-\bar{\lambda}_3)}e_1(\bar{\lambda}_1)^2\\
				 &+\frac{P(\bar{\lambda}_1)^2P(\bar{\lambda}_3)-2P(\bar{\lambda}_1)^2P(\bar{\lambda}_2)}{P(\bar{\lambda}_2)P(\bar{\lambda}_3)^2(\bar{\lambda}_2-\bar{\lambda}_3)}
				e_2(\bar{\lambda}_1)^2,
			\end{split}
		\end{equation}
		and
		\begin{equation}\label{pro2-8}
			\begin{split}
				e_2e_2(\bar{\lambda}_4)&=(\bar{\lambda}_4-\bar{\lambda}_2)R_{2j2j}
				+\frac{P(\bar{\lambda}_1)^2(\bar{\lambda}_4-\bar{\lambda}_2)}{P(\bar{\lambda}_2)P(\bar{\lambda}_4)(\bar{\lambda}_1
					-\bar{\lambda}_2)(\bar{\lambda}_1-\bar{\lambda}_4)}e_1(\bar{\lambda}_1)^2\\
				 &+\frac{P(\bar{\lambda}_1)^2P(\bar{\lambda}_4)-2P(\bar{\lambda}_1)^2P(\bar{\lambda}_2)}{P(\bar{\lambda}_2)P(\bar{\lambda}_4)^2(\bar{\lambda}_2-\bar{\lambda}_4)}
				e_2(\bar{\lambda}_1)^2.
			\end{split}
		\end{equation}
		Let
		$$U(\bar{\lambda}_3,\bar{\lambda}_4)=\sum_{m\neq 3}\frac{P(\bar{\lambda}_3)-P(\bar{\lambda}_m)}
		{P(\bar{\lambda}_3)P(\bar{\lambda}_m)(\bar{\lambda}_m-\bar{\lambda}_3)}-\sum_{m\neq 4}\frac{P(\bar{\lambda}_4)-P(\bar{\lambda}_m)}
		{P(\bar{\lambda}_4)P(\bar{\lambda}_m)(\bar{\lambda}_m-\bar{\lambda}_4)}.$$
		By (\ref{cond5}), we get
		\begin{equation}\label{pro2-9}
			P(\bar{\lambda}_4)e_ke_k(\bar{\lambda}_4)-P(\bar{\lambda}_3)e_ke_k(\bar{\lambda}_3)
			=P(\bar{\lambda}_1)^2e_k(\bar{\lambda}_1)^2U(\bar{\lambda}_3,\bar{\lambda}_4).
		\end{equation}
Let $k=1,2$ in (\ref{pro2-9}), we get
$$P(\bar{\lambda}_4)e_1e_1(\bar{\lambda}_4)-P(\bar{\lambda}_3)e_1e_1(\bar{\lambda}_3)
			=P(\bar{\lambda}_1)^2e_1(\bar{\lambda}_1)^2U(\bar{\lambda}_3,\bar{\lambda}_4),$$
and
$$P(\bar{\lambda}_4)e_2e_2(\bar{\lambda}_4)-P(\bar{\lambda}_3)e_2e_2(\bar{\lambda}_3)
			=P(\bar{\lambda}_1)^2e_2(\bar{\lambda}_1)^2U(\bar{\lambda}_3,\bar{\lambda}_4).$$
		
		Combining (\ref{pro2-5}), (\ref{pro2-6}), (\ref{pro2-7}), (\ref{pro2-8}) and (\ref{pro2-9}), we get
		\begin{equation}\label{pro2-10}
			\begin{split}
				&P(\bar{\lambda}_4)(\bar{\lambda}_4-\bar{\lambda}_1)R_{1j1j}-P(\bar{\lambda}_3)(\bar{\lambda}_3-\bar{\lambda}_1)R_{1i1i}\\
				&=\Big[U(\bar{\lambda}_3,\bar{\lambda}_4)-\frac{P(\bar{\lambda}_4)-2P(\bar{\lambda}_1)}
				{P(\bar{\lambda}_4)P(\bar{\lambda}_1)(\bar{\lambda}_1-\bar{\lambda}_4)}+\frac{P(\bar{\lambda}_3)-2P(\bar{\lambda}_1)}
				{P(\bar{\lambda}_3)P(\bar{\lambda}_1)(\bar{\lambda}_1-\bar{\lambda}_3)}\Big]P(\bar{\lambda}_1)^2e_1(\bar{\lambda}_1)^2\\
				&+\frac{\bar{\lambda}_3-\bar{\lambda}_4}{P(\bar{\lambda}_1)(\bar{\lambda}_2-\bar{\lambda}_3)(\bar{\lambda}_2-\bar{\lambda}_4)}
				P(\bar{\lambda}_1)^2e_2(\bar{\lambda}_1)^2,
			\end{split}
		\end{equation}
		and
		\begin{equation}\label{pro2-11}
			\begin{split}
				&P(\bar{\lambda}_4)(\bar{\lambda}_4-\bar{\lambda}_2)R_{2j2j}-P(\bar{\lambda}_3)(\bar{\lambda}_3-\bar{\lambda}_2)R_{2i2i}\\
				&=\Big[U(\bar{\lambda}_3,\bar{\lambda}_4)-\frac{P(\bar{\lambda}_4)-2P(\bar{\lambda}_2)}
				{P(\bar{\lambda}_4)P(\bar{\lambda}_1)(\bar{\lambda}_2-\bar{\lambda}_4)}+\frac{P(\bar{\lambda}_3)-2P(\bar{\lambda}_2)}
				{P(\bar{\lambda}_3)P(\bar{\lambda}_1)(\bar{\lambda}_2-\bar{\lambda}_3)}\Big]P(\bar{\lambda}_1)^2e_2(\bar{\lambda}_1)^2\\
				&+\frac{\bar{\lambda}_3-\bar{\lambda}_4}{P(\bar{\lambda}_2)(\bar{\lambda}_1-\bar{\lambda}_3)(\bar{\lambda}_1-\bar{\lambda}_4)}
				P(\bar{\lambda}_1)^2e_1(\bar{\lambda}_1)^2.
			\end{split}
		\end{equation}

Let
$$Q_1=U(\bar{\lambda}_3,\bar{\lambda}_4)-\frac{P(\bar{\lambda}_4)-2P(\bar{\lambda}_1)}
				{P(\bar{\lambda}_4)P(\bar{\lambda}_1)(\bar{\lambda}_1-\bar{\lambda}_4)}+\frac{P(\bar{\lambda}_3)-2P(\bar{\lambda}_1)}
				{P(\bar{\lambda}_3)P(\bar{\lambda}_1)(\bar{\lambda}_1-\bar{\lambda}_3)},$$
$$Q_2=U(\bar{\lambda}_3,\bar{\lambda}_4)-\frac{P(\bar{\lambda}_4)-2P(\bar{\lambda}_2)}
				{P(\bar{\lambda}_4)P(\bar{\lambda}_1)(\bar{\lambda}_2-\bar{\lambda}_4)}+\frac{P(\bar{\lambda}_3)-2P(\bar{\lambda}_2)}
				{P(\bar{\lambda}_3)P(\bar{\lambda}_1)(\bar{\lambda}_2-\bar{\lambda}_3)}.$$
Then the equation (\ref{pro2-10}) and (\ref{pro2-11}) can be rewritten as the following equation,
\begin{equation}\label{pro2-12}
\begin{split}
&\frac{Q_1P(\bar{\lambda}_1)^2}{\bar{\lambda_3}-\bar{\lambda}_4}e_1(\bar{\lambda}_1)^2
				+\frac{1}{(\bar{\lambda}_2-\bar{\lambda}_3)(\bar{\lambda}_2-\bar{\lambda}_4)}e_2(\bar{\lambda}_1)^2\\
&=\frac{1}{P(\bar{\lambda}_1)(\bar{\lambda_3}-\bar{\lambda}_4)}\Big(P(\bar{\lambda}_4)(\bar{\lambda}_4-\bar{\lambda}_1)R_{1j1j}
-P(\bar{\lambda}_3)(\bar{\lambda}_3-\bar{\lambda}_1)R_{1i1i}\Big),
\end{split}
\end{equation}
and
\begin{equation}\label{pro2-13}
\begin{split}
&\frac{P(\bar{\lambda}_2)Q_2}{\bar{\lambda}_3-\bar{\lambda}_4}e_2(\bar{\lambda}_1)^2
				+\frac{1}{(\bar{\lambda}_1-\bar{\lambda}_3)(\bar{\lambda}_1-\bar{\lambda}_4)}e_1(\bar{\lambda}_1)^2\\
&=\frac{P(\bar{\lambda}_2)}{P(\bar{\lambda}_1)^2(\bar{\lambda}_3-\bar{\lambda}_4)}\Big(P(\bar{\lambda}_4)(\bar{\lambda}_4-\bar{\lambda}_2)R_{2j2j}
                 -P(\bar{\lambda}_3)(\bar{\lambda}_3-\bar{\lambda}_2)R_{2i2i}\Big).
			\end{split}
		\end{equation}
From (\ref{pro2-12}), we get
\begin{equation}\label{pro2-14}
\frac{e_1(\bar{\lambda}_1)^2}
				{(\bar{\lambda}_1-\bar{\lambda}_3)(\bar{\lambda}_1-\bar{\lambda}_4)}+
				\frac{e_2(\bar{\lambda}_1)^2}{(\bar{\lambda}_2-\bar{\lambda}_3)(\bar{\lambda}_2-\bar{\lambda}_4)}
=\frac{-P(\bar{\lambda}_3)P(\bar{\lambda}_4)}{P(\bar{\lambda}_1)^2}R_{ijij}.
		\end{equation}

Let
$$\alpha=\frac{1}{P(\bar{\lambda}_1)(\bar{\lambda_3}-\bar{\lambda}_4)}\Big(P(\bar{\lambda}_4)(\bar{\lambda}_4-\bar{\lambda}_1)R_{1j1j}
-P(\bar{\lambda}_3)(\bar{\lambda}_3-\bar{\lambda}_1)R_{1i1i}\Big)+\frac{P(\bar{\lambda}_3)P(\bar{\lambda}_4)}{P(\bar{\lambda}_1)^2}R_{ijij}.$$
$$\beta=\frac{P(\bar{\lambda}_2)}{P(\bar{\lambda}_1)^2(\bar{\lambda}_3-\bar{\lambda}_4)}\Big(P(\bar{\lambda}_4)(\bar{\lambda}_4-\bar{\lambda}_2)R_{2j2j}
                 -P(\bar{\lambda}_3)(\bar{\lambda}_3
                 -\bar{\lambda}_2)R_{2i2i}\Big)+\frac{P(\bar{\lambda}_3)P(\bar{\lambda}_4)}{P(\bar{\lambda}_1)^2}R_{ijij}.$$

Combining (\ref{pro2-12}) and (\ref{pro2-14}), we obtain,
\begin{equation}\label{pro2-15}
\Big(\frac{Q_1P(\bar{\lambda}_1)^2}{\bar{\lambda_3}-\bar{\lambda}_4}-\frac{1}
				{(\bar{\lambda}_1-\bar{\lambda}_3)(\bar{\lambda}_1-\bar{\lambda}_4)}\Big)e_1(\bar{\lambda}_1)^2=\alpha.
\end{equation}

Combining (\ref{pro2-13}) and (\ref{pro2-14}), we obtain,
\begin{equation}\label{pro2-16}
\Big(\frac{P(\bar{\lambda}_2)Q_2}{\bar{\lambda}_3-\bar{\lambda}_4}
				-\frac{1}{(\bar{\lambda}_2-\bar{\lambda}_3)(\bar{\lambda}_2-\bar{\lambda}_4)}\Big)e_2(\bar{\lambda}_1)^2=\beta.
		\end{equation}
By Gauss equations, $P(\bar{\lambda}_i)$, $\alpha$ and $\beta$ are  rational polynomials about variables $\bar{\lambda}_1,\bar{\lambda}_2, \bar{\lambda}_3$ and $\bar{\lambda}_4$.

If $$\frac{Q_1P(\bar{\lambda}_1)^2}{\bar{\lambda_3}-\bar{\lambda}_4}-\frac{1}
				{(\bar{\lambda}_1-\bar{\lambda}_3)(\bar{\lambda}_1-\bar{\lambda}_4)}=0,$$ or $$\frac{P(\bar{\lambda}_2)Q_2}{\bar{\lambda}_3-\bar{\lambda}_4}
				-\frac{1}{(\bar{\lambda}_2-\bar{\lambda}_3)(\bar{\lambda}_2-\bar{\lambda}_4)}=0.$$

 Combining the three  algebraic equations about four variables $\bar{\lambda}_1, \bar{\lambda}_2, \bar{\lambda}_3$ and $\bar{\lambda}_4$,
 $$\bar{\lambda}_1^k+\bar{\lambda}_2^k+m_3\bar{\lambda}_3^k+m_4\bar{\lambda}_4^k=c_k, ~~ k=1,2,3,$$
 where $c_1,c_2,c_3$ are constant,
 then we obtain four rational algebraic equations about four variables $\bar{\lambda}_1, \bar{\lambda}_2, \bar{\lambda}_3$ and $\bar{\lambda}_4$, which are independent of each other. Hence $\bar{\lambda}_1,\bar{\lambda}_2,\bar{\lambda}_3,\bar{\lambda}_4$ are constant and $$e_1(\bar{\lambda}_1)=\cdots=e_1(\bar{\lambda}_g)= 0,~~e_2(\bar{\lambda}_1)=\cdots=e_2(\bar{\lambda}_g)= 0.$$
		This is a  contradiction.
		
If $$\frac{Q_1P(\bar{\lambda}_1)^2}{\bar{\lambda_3}-\bar{\lambda}_4}-\frac{1}
				{(\bar{\lambda}_1-\bar{\lambda}_3)(\bar{\lambda}_1-\bar{\lambda}_4)}\neq0,$$ and $$\frac{P(\bar{\lambda}_2)Q_2}{\bar{\lambda}_3-\bar{\lambda}_4}
				-\frac{1}{(\bar{\lambda}_2-\bar{\lambda}_3)(\bar{\lambda}_2-\bar{\lambda}_4)}\neq0.$$		
Let
$$\mu=\frac{Q_1P(\bar{\lambda}_1)^2}{\bar{\lambda_3}-\bar{\lambda}_4}-\frac{1}
				{(\bar{\lambda}_1-\bar{\lambda}_3)(\bar{\lambda}_1-\bar{\lambda}_4)},$$
$$\nu=\frac{P(\bar{\lambda}_2)Q_2}{\bar{\lambda}_3-\bar{\lambda}_4}
				-\frac{1}{(\bar{\lambda}_2-\bar{\lambda}_3)(\bar{\lambda}_2-\bar{\lambda}_4)}.$$

Combining (\ref{pro2-14}), (\ref{pro2-15}) and (\ref{pro2-16}), we obtain,
\begin{equation}\label{pro2-17}
\frac{\nu\alpha}{(\bar{\lambda}_1
-\bar{\lambda}_3)(\bar{\lambda}_1-\bar{\lambda}_4)}
+\frac{\mu\beta}{(\bar{\lambda}_2
-\bar{\lambda}_3)(\bar{\lambda}_2-\bar{\lambda}_4)}
=\frac{-P(\bar{\lambda}_3)P(\bar{\lambda}_4)}{P(\bar{\lambda}_1)^2}R_{ijij}\mu\nu.
		\end{equation}
Since $\mu,\nu$, $\alpha,\beta$ and $P(\bar{\lambda}_i)$  are  rational polynomials about variables $\bar{\lambda}_1,\bar{\lambda}_2, \bar{\lambda}_3$ and $\bar{\lambda}_4$, the equation (\ref{pro2-17}) is rational polynomials about variables $\bar{\lambda}_1,\bar{\lambda}_2, \bar{\lambda}_3$ and $\bar{\lambda}_4$. Combining the three  algebraic equations about four variables $\bar{\lambda}_1, \bar{\lambda}_2, \bar{\lambda}_3$ and $\bar{\lambda}_4$,
 $$\bar{\lambda}_1^k+\bar{\lambda}_2^k+m_3\bar{\lambda}_3^k+m_4\bar{\lambda}_4^k=c_k, ~~ k=1,2,3,$$
 where $c_1,c_2,c_3$ are constant,
 then we obtain four rational algebraic equations about four variables $\bar{\lambda}_1, \bar{\lambda}_2, \bar{\lambda}_3$ and $\bar{\lambda}_4$, which are independent of each other. Hence $\bar{\lambda}_1,\bar{\lambda}_2,\bar{\lambda}_3,\bar{\lambda}_4$ are constant and $$e_1(\bar{\lambda}_1)=\cdots=e_1(\bar{\lambda}_g)= 0,~~e_2(\bar{\lambda}_1)=\cdots=e_2(\bar{\lambda}_g)= 0.$$
		This is a  contradiction.

If we assume that $$e_1(\bar{\lambda}_1),\cdots,e_1(\bar{\lambda}_g)\neq 0,~~e_2(\bar{\lambda}_1)=\cdots=e_2(\bar{\lambda}_g)= 0,$$ then
		there exist at least $g-1$ principal curvatures  that are Dupin principal curvatures. By theorem \ref{th3}, this is a contradiction, thus  we prove  {\bf Case 3}.
		
		Next we prove {\bf Case 4},
		$$(h_{ij})=diag(\lambda_1,\lambda_2,\cdots,\lambda_n)=diag(\bar{\lambda}_1, \bar{\lambda}_2,
		\bar{\lambda}_3,\underbrace{\bar{\lambda}_4,\cdots,\bar{\lambda}_4}_{m_4}).$$
		Next we make the following convention
		on the range of indices, $$4\leq \alpha,\beta,\gamma\leq n.$$
		Thus we have
		$$e_{\alpha}(\bar{\lambda}_1)=e_{\alpha}(\bar{\lambda}_2)=e_{\alpha}(\bar{\lambda}_3)=e_{\alpha}(\bar{\lambda}_4)=0.$$
		We fix the index $\alpha,\beta\in [\bar{\lambda}_4]$ and $\alpha\neq\beta$
		by the second covariant derivative of the second fundamental form and (\ref{con2-1}), we obtain the following equations,
		\begin{equation*}
			\begin{split}
				&h_{11,\alpha\beta}=\frac{2h_{12,\alpha}h_{12,\beta}}{\bar{\lambda}_2-\bar{\lambda}_1}
				+\frac{2h_{13,\alpha}h_{13,\beta}}{\bar{\lambda}_3-\bar{\lambda}_1}\\
				 &h_{\alpha\beta,11}=\frac{2h_{12,\alpha}h_{12,\beta}}{\bar{\lambda}_2-\bar{\lambda_4}}+\frac{2h_{13,\alpha}h_{13,\beta}}{\bar{\lambda}_3-\bar{\lambda_4}}
			\end{split}
		\end{equation*}
		By Ricci identity, we get
		\begin{equation}\label{pro3-1}
			\frac{h_{12,\alpha}h_{12,\beta}}{(\bar{\lambda}_2-\bar{\lambda}_1)(\bar{\lambda}_2-\bar{\lambda}_4)}
			+\frac{h_{13,\alpha}h_{13,\beta}}{(\bar{\lambda}_3-\bar{\lambda}_1)(\bar{\lambda}_3-\bar{\lambda}_4)}=0.
		\end{equation}
		similarly we have
		\begin{equation}\label{pro3-2}
			\begin{split}
				&\frac{h_{12,\alpha}h_{12,\beta}}{(\bar{\lambda}_1-\bar{\lambda}_2)(\bar{\lambda}_1-\bar{\lambda}_4)}
				+\frac{h_{23,\alpha}h_{23,\beta}}{(\bar{\lambda}_3-\bar{\lambda}_2)(\bar{\lambda}_3-\bar{\lambda}_4)}=0,\\
				&\frac{h_{13,\alpha}h_{13,\beta}}{(\bar{\lambda}_1-\bar{\lambda}_3)(\bar{\lambda}_1-\bar{\lambda}_4)}
				+\frac{h_{23,\alpha}h_{23,\beta}}{(\bar{\lambda}_2-\bar{\lambda}_3)(\bar{\lambda}_2-\bar{\lambda}_4)}=0.
			\end{split}
		\end{equation}
		{\bf Claim}: $h_{12,\alpha}=0,~~h_{13,\alpha}=0,~~h_{23,\alpha}=0, ~~4\leq \alpha\leq n$.
		
		We prove Claim  by contradiction. We can assume $h_{12,\alpha}\neq0$ for some $\alpha$. Thus the vector $E=\sum_{\gamma}h_{12,\gamma}e_{\gamma}\neq 0$, and we can reselect orthonormal basis $\{e_4,\cdots,e_n\}$ in $D_{\bar{\lambda}_4}$ such that $e_4=\frac{E}{|E|}$. Thus we have
		$$h_{12,4}\neq 0,~h_{12,5}=\cdots=h_{12,n}=0.$$
		Combining (\ref{pro3-2}), we obtain
		\begin{equation}\label{pro3-3}
			h_{12,\alpha}h_{12,\beta}=h_{13,\alpha}h_{13,\beta}=h_{23,\alpha}h_{23,\beta}=0,~~\alpha\neq\beta.
		\end{equation}
		So for $\alpha\ne \beta$, there are eight cases in total:
		\begin{enumerate}
			\item $h_{12\alpha}=0,\,h_{13\alpha}=0,\,h_{23\alpha}=0$,
			\item $h_{12\alpha}=0,\,h_{13\alpha}=0,\,h_{23\beta}=0$,
			\item $h_{12\alpha}=0,\,h_{13\beta}=0,\,h_{23\alpha}=0$,
			\item $h_{12\alpha}=0,\,h_{13\beta}=0,\,h_{23\beta}=0$,
			\item $h_{12\beta}=0,\,h_{13\alpha}=0,\,h_{23\alpha}=0$,
			\item $h_{12\beta}=0,\,h_{13\alpha}=0,\,h_{23\beta}=0$,
			\item $h_{12\beta}=0,\,h_{13\beta}=0,\,h_{23\alpha}=0$,
			\item $h_{12\beta}=0,\,h_{13\beta}=0,\,h_{23\beta}=0$.
		\end{enumerate}
		
		Thus there exists at least one index $\beta$ that satifies either $h_{12,\beta}=h_{13,\beta}=0$,
		$h_{12,\beta}=h_{23,\beta}=0$, or $h_{13,\beta}=h_{23,\beta}=0$. Without loss of generality, we assume
		$$h_{12,\beta}=h_{13,\beta}=0.$$
		In (\ref{con3-1}), Let $i=1$ and $j=\beta$, we have
		\begin{equation}\label{pro3-40}
			\begin{split}
				R_{1\beta1\beta}&=\frac{e_1(h_{\beta\beta,1})}{\bar{\lambda}_4-\bar{\lambda}_1}
				-\frac{h_{11,2}h_{\beta\beta,2}}{(\bar{\lambda}_2-\bar{\lambda}_1)(\bar{\lambda}_2-\bar{\lambda}_4)}\\
				&-\frac{h_{11,3}h_{\beta\beta,3}}{(\bar{\lambda}_3-\bar{\lambda}_1)(\bar{\lambda}_3-\bar{\lambda}_4)}
				+\frac{h_{11,1}h_{\beta\beta,1}-2h_{\beta\beta,1}^2}{(\bar{\lambda}_1-\bar{\lambda}_4)^2}.
			\end{split}
		\end{equation}
		In (\ref{con3-1}), Let $i=1$ and $j=\gamma\neq\beta$, we have
		\begin{equation}\label{pro3-4}
			\begin{split}
				R_{1\gamma1\gamma}&=\frac{e_1(h_{\gamma\gamma,1})}{\bar{\lambda}_4-\bar{\lambda}_1}+
				\frac{2h_{12,\gamma}^2}{(\bar{\lambda}_2-\bar{\lambda}_1)(\bar{\lambda}_2-\bar{\lambda}_4)}+
				\frac{2h_{13,\gamma}^2}{(\bar{\lambda}_3-\bar{\lambda}_1)(\bar{\lambda}_3-\bar{\lambda}_4)}\\
				&-\frac{h_{11,2}h_{\gamma\gamma,2}}{(\bar{\lambda}_2-\bar{\lambda}_1)(\bar{\lambda}_2-\bar{\lambda}_4)}
				-\frac{h_{11,3}h_{\gamma\gamma,3}}{(\bar{\lambda}_3-\bar{\lambda}_1)(\bar{\lambda}_3-\bar{\lambda}_4)}
				+\frac{h_{11,1}h_{\gamma\gamma,1}-2h_{\gamma\gamma,1}^2}{(\bar{\lambda}_1-\bar{\lambda}_4)^2}.
			\end{split}
		\end{equation}
		Since $h_{\beta\beta,1}=h_{\gamma\gamma,1}=e_1(\bar{\lambda_4})$, by (\ref{pro3-4}) and (\ref{pro3-40})
		we have
		$$\frac{h_{12,\gamma}^2}{(\bar{\lambda}_2-\bar{\lambda}_1)(\bar{\lambda}_2-\bar{\lambda}_4)}+
		\frac{h_{13,\gamma}^2}{(\bar{\lambda}_3-\bar{\lambda}_1)(\bar{\lambda}_3-\bar{\lambda}_4)}=0.$$

		So  $h_{12,\gamma}=0$ and $h_{13,\gamma}=0$,	which is a contradiction. Thus we finish the proof of Claim.
		
		By the Claim, let $i,j=1,2,3$ in (\ref{con3-1}), we can obtain the following equations,
		\begin{equation}\label{pro3-71}
			\begin{split}
				&e_1e_1(\bar{\lambda}_2)-e_2e_2(\bar{\lambda}_1)=
				\frac{2(\bar{\lambda}_1-\bar{\lambda}_2)h_{12,3}^2}{(\bar{\lambda}_3-\bar{\lambda}_1)(\bar{\lambda}_3-\bar{\lambda}_2)}
				 +\frac{(\bar{\lambda}_2-\bar{\lambda}_1)e_3(\bar{\lambda}_1)e_3(\bar{\lambda}_2)}{(\bar{\lambda}_3-\bar{\lambda}_1)(\bar{\lambda}_3-\bar{\lambda}_2)}\\
				&+(\bar{\lambda}_2-\bar{\lambda}_1)R_{1212}+\frac{e_1(\bar{\lambda}_1)e_1(\bar{\lambda}_2)+e_2(\bar{\lambda}_1)e_2(\bar{\lambda}_2)
					-2(e_1(\bar{\lambda}_2)^2+e_2(\bar{\lambda}_1)^2)}{\bar{\lambda}_1-\bar{\lambda}_2};
			\end{split}
		\end{equation}
		\begin{equation}\label{pro3-72}
			\begin{split}
				&e_1e_1(\bar{\lambda}_3)-e_3e_3(\bar{\lambda}_1)=
				\frac{2(\bar{\lambda}_1-\bar{\lambda}_3)h_{12,3}^2}{(\bar{\lambda}_2-\bar{\lambda}_1)(\bar{\lambda}_2-\bar{\lambda}_3)}
				 +\frac{(\bar{\lambda}_3-\bar{\lambda}_1)e_2(\bar{\lambda}_1)e_2(\bar{\lambda}_3)}{(\bar{\lambda}_2-\bar{\lambda}_1)(\bar{\lambda}_2-\bar{\lambda}_3)}\\
				&+(\bar{\lambda}_3-\bar{\lambda}_1)R_{1313}+\frac{e_1(\bar{\lambda}_1)e_1(\bar{\lambda}_3)+e_3(\bar{\lambda}_1)e_3(\bar{\lambda}_3)
					-2(e_1(\bar{\lambda}_3)^2+e_3(\bar{\lambda}_1)^2)}{\bar{\lambda}_1-\bar{\lambda}_3};
			\end{split}
		\end{equation}
		\begin{equation}\label{pro3-73}
			\begin{split}
				&e_2e_2(\bar{\lambda}_3)-e_3e_3(\bar{\lambda}_2)=
				+\frac{2(\bar{\lambda}_2-\bar{\lambda}_3)h_{12,3}^2}{(\bar{\lambda}_1-\bar{\lambda}_2)(\bar{\lambda}_1-\bar{\lambda}_3)}
				 +\frac{(\bar{\lambda}_3-\bar{\lambda}_2)e_1(\bar{\lambda}_2)e_1(\bar{\lambda}_3)}{(\bar{\lambda}_1-\bar{\lambda}_2)(\bar{\lambda}_1-\bar{\lambda}_3)}\\
				&+(\bar{\lambda}_3-\bar{\lambda}_2)R_{2323}+\frac{e_2(\bar{\lambda}_2)e_2(\bar{\lambda}_3)+e_3(\bar{\lambda}_2)e_3(\bar{\lambda}_3)
					-2(e_2(\bar{\lambda}_3)^2+e_3(\bar{\lambda}_2)^2)}{\bar{\lambda}_2-\bar{\lambda}_3}.
			\end{split}
		\end{equation}
		
		Let $i=1,2,3$ and $j=\alpha$ in (\ref{con3-1}), we can obtain the following equations,
		\begin{equation}\label{pro3-6}
			\begin{split}
				e_1e_1(\bar{\lambda}_4)&=(\bar{\lambda}_4-\bar{\lambda}_1)R_{1\alpha1\alpha}
				 +\frac{(\bar{\lambda}_4-\bar{\lambda}_1)e_2(\bar{\lambda}_1)e_2(\bar{\lambda}_4)}{(\bar{\lambda}_2-\bar{\lambda}_1)(\bar{\lambda}_2-\bar{\lambda}_4)}\\
				 &+\frac{(\bar{\lambda}_4-\bar{\lambda}_1)e_3(\bar{\lambda}_1)e_3(\bar{\lambda}_4)}{(\bar{\lambda}_3-\bar{\lambda}_1)(\bar{\lambda}_3-\bar{\lambda}_4)}
				+\frac{e_1(\bar{\lambda}_1)e_1(\bar{\lambda}_4)-2e_1(\bar{\lambda}_4)^2}{\bar{\lambda}_1-\bar{\lambda}_4};\\
				e_2e_2(\bar{\lambda}_4)&=(\bar{\lambda}_4-\bar{\lambda}_2)R_{2\alpha2\alpha}
				 +\frac{(\bar{\lambda}_4-\bar{\lambda}_2)e_1(\bar{\lambda}_2)e_1(\bar{\lambda}_4)}{(\bar{\lambda}_1-\bar{\lambda}_2)(\bar{\lambda}_1-\bar{\lambda}_4)}\\
				 &+\frac{(\bar{\lambda}_4-\bar{\lambda}_2)e_3(\bar{\lambda}_2)e_3(\bar{\lambda}_4)}{(\bar{\lambda}_3-\bar{\lambda}_2)(\bar{\lambda}_3-\bar{\lambda}_4)}
				+\frac{e_2(\bar{\lambda}_2)e_2(\bar{\lambda}_4)-2e_2(\bar{\lambda}_4)^2}{\bar{\lambda}_2-\bar{\lambda}_4};\\
				e_3e_3(\bar{\lambda}_4)&=(\bar{\lambda}_4-\bar{\lambda}_3)R_{3\alpha3\alpha}
				 +\frac{(\bar{\lambda}_4-\bar{\lambda}_3)e_1(\bar{\lambda}_3)e_1(\bar{\lambda}_4)}{(\bar{\lambda}_1-\bar{\lambda}_3)(\bar{\lambda}_1-\bar{\lambda}_4)}\\
				 &+\frac{(\bar{\lambda}_4-\bar{\lambda}_3)e_2(\bar{\lambda}_3)e_2(\bar{\lambda}_4)}{(\bar{\lambda}_2-\bar{\lambda}_3)(\bar{\lambda}_2-\bar{\lambda}_4)}
				+\frac{e_3(\bar{\lambda}_3)e_3(\bar{\lambda}_4)-2e_3(\bar{\lambda}_4)^2}{\bar{\lambda}_3-\bar{\lambda}_4}.
			\end{split}
		\end{equation}
		
		By (\ref{cond5}), we have the following equations,
		\begin{equation}\label{pro3-81}
			\begin{split}
				&e_ie_i(\bar{\lambda}_1)=
				 -\frac{(\bar{\lambda}_4-\bar{\lambda}_2)(\bar{\lambda}_4-\bar{\lambda}_3)}{(\bar{\lambda}_2-\bar{\lambda}_1)(\bar{\lambda}_3-\bar{\lambda}_1)}
				m_4e_ie_i(\bar{\lambda}_4)\\
				&+e_i(\bar{\lambda}_1)\Big[\frac{e_i(\bar{\lambda}_2)-e_i(\bar{\lambda}_4)}{\bar{\lambda}_2-\bar{\lambda}_4}
				+\frac{e_i(\bar{\lambda}_3)-e_i(\bar{\lambda}_4)}{\bar{\lambda}_3-\bar{\lambda}_4}
				-\frac{e_i(\bar{\lambda}_2)-e_i(\bar{\lambda}_1)}{\bar{\lambda}_2-\bar{\lambda}_1}
				-\frac{e_i(\bar{\lambda}_3)-e_i(\bar{\lambda}_1)}{\bar{\lambda}_3-\bar{\lambda}_3}\Big];
			\end{split}
		\end{equation}
		\begin{equation}\label{pro3-82}
			\begin{split}
				&e_ie_i(\bar{\lambda}_2)=
				 -\frac{(\bar{\lambda}_4-\bar{\lambda}_1)(\bar{\lambda}_4-\bar{\lambda}_3)}{(\bar{\lambda}_1-\bar{\lambda}_2)(\bar{\lambda}_3-\bar{\lambda}_2)}
				m_4e_ie_i(\bar{\lambda}_4)\\
				&+e_i(\bar{\lambda}_2)\Big[\frac{e_i(\bar{\lambda}_1)-e_i(\bar{\lambda}_4)}{\bar{\lambda}_1-\bar{\lambda}_4}
				+\frac{e_i(\bar{\lambda}_3)-e_i(\bar{\lambda}_4)}{\bar{\lambda}_3-\bar{\lambda}_4}
				-\frac{e_i(\bar{\lambda}_1)-e_i(\bar{\lambda}_2)}{\bar{\lambda}_1-\bar{\lambda}_2}
				-\frac{e_i(\bar{\lambda}_3)-e_i(\bar{\lambda}_2)}{\bar{\lambda}_3-\bar{\lambda}_2}\Big];
			\end{split}
		\end{equation}
		\begin{equation}\label{pro3-83}
			\begin{split}
				&e_ie_i(\bar{\lambda}_3)=
				 -\frac{(\bar{\lambda}_4-\bar{\lambda}_1)(\bar{\lambda}_4-\bar{\lambda}_2)}{(\bar{\lambda}_1-\bar{\lambda}_3)(\bar{\lambda}_2-\bar{\lambda}_3)}
				m_4e_ie_i(\bar{\lambda}_4)\\
				&+e_i(\bar{\lambda}_3)\Big[\frac{e_i(\bar{\lambda}_1)-e_i(\bar{\lambda}_4)}{\bar{\lambda}_1-\bar{\lambda}_4}
				+\frac{e_i(\bar{\lambda}_2)-e_i(\bar{\lambda}_4)}{\bar{\lambda}_2-\bar{\lambda}_4}
				-\frac{e_i(\bar{\lambda}_1)-e_i(\bar{\lambda}_3)}{\bar{\lambda}_1-\bar{\lambda}_3}
				-\frac{e_i(\bar{\lambda}_2)-e_i(\bar{\lambda}_3)}{\bar{\lambda}_2-\bar{\lambda}_3}\Big].
			\end{split}
		\end{equation}
		By (\ref{cond4}), we have the following equations,
		\begin{equation}\label{pro3-5}
			\begin{split}
				&e_i(\bar{\lambda}_1)
				 =\frac{(\bar{\lambda}_4-\bar{\lambda}_2)(\bar{\lambda}_3-\bar{\lambda}_4)}{(\bar{\lambda}_2-\bar{\lambda}_1)(\bar{\lambda}_3-\bar{\lambda}_1)}
				m_4e_i(\bar{\lambda}_4),~~i=1,2,3;\\
				&e_i(\bar{\lambda}_2)
				 =\frac{(\bar{\lambda}_4-\bar{\lambda}_1)(\bar{\lambda}_3-\bar{\lambda}_4)}{(\bar{\lambda}_1-\bar{\lambda}_2)(\bar{\lambda}_3-\bar{\lambda}_2)}
				m_4e_i(\bar{\lambda}_4),~~i=1,2,3;\\
				&e_i(\bar{\lambda}_3)
				 =\frac{(\bar{\lambda}_4-\bar{\lambda}_1)(\bar{\lambda}_2-\bar{\lambda}_4)}{(\bar{\lambda}_1-\bar{\lambda}_3)(\bar{\lambda}_2-\bar{\lambda}_3)}
				m_4e_i(\bar{\lambda}_4),~~i=1,2,3.
			\end{split}
		\end{equation}	

Combining (\ref{pro3-6}), (\ref{pro3-81}), (\ref{pro3-82}), (\ref{pro3-83}) and (\ref{pro3-5}), we have the following equations.
	\begin{equation}\label{pro3-91}
		\begin{aligned}
			e_1[e_1(\bar{\lambda}_j)] &= e_1(\bar{\lambda}_4)^2
			\left[
			\sum_{l\neq 4} \frac{P(\bar{\lambda}_4) - P(\bar{\lambda}_l)}{P(\bar{\lambda}_l)(\bar{\lambda}_l - \bar{\lambda}_4)}
			- \sum_{l\neq j} \frac{P(\bar{\lambda}_4)\bigl(P(\bar{\lambda}_j) - P(\bar{\lambda}_l)\bigr)}{P(\bar{\lambda}_l)P(\bar{\lambda}_j)(\bar{\lambda}_l - \bar{\lambda}_j)}
			\right]
			\frac{P(\bar{\lambda}_4)}{P(\bar{\lambda}_j)} \\
			&\quad + \frac{P(\bar{\lambda}_4)}{P(\bar{\lambda}_j)}
			\left\{
			(\bar{\lambda}_4 - \bar{\lambda}_1)R_{1\alpha1\alpha}
			+ \frac{P(\bar{\lambda}_4)}{(\bar{\lambda}_2 - \bar{\lambda}_1)^2 (\bar{\lambda}_3 - \bar{\lambda}_1) (\bar{\lambda}_2 - \bar{\lambda}_4)} e_2(\bar{\lambda}_4)^2
			\right. \\
			&\left.
			\quad+ \frac{P(\bar{\lambda}_4)}{(\bar{\lambda}_2 - \bar{\lambda}_1) (\bar{\lambda}_3 - \bar{\lambda}_1)^2 (\bar{\lambda}_3 - \bar{\lambda}_4)} e_3(\bar{\lambda}_4)^2
			+ \frac{e_1(\bar{\lambda}_4)^2}{\bar{\lambda}_1 - \bar{\lambda}_4}\left(\frac{P(\bar{\lambda}_4)}{P(\bar{\lambda}_1)} - 2\right)
			\right\}, 		
		\end{aligned}
	\end{equation}
	\begin{equation}\label{pro3-92}
		\begin{aligned}
			e_2[e_2(\bar{\lambda}_j)] &= e_2(\bar{\lambda}_4)^2
			\left[
			\sum_{l\neq 4} \frac{P(\bar{\lambda}_4) - P(\bar{\lambda}_l)}{P(\bar{\lambda}_l)(\bar{\lambda}_l - \bar{\lambda}_4)}
			- \sum_{l\neq j} \frac{P(\bar{\lambda}_4)\bigl(P(\bar{\lambda}_j) - P(\bar{\lambda}_l)\bigr)}{P(\bar{\lambda}_l)P(\bar{\lambda}_j)(\bar{\lambda}_l - \bar{\lambda}_j)}
			\right]
			\frac{P(\bar{\lambda}_4)}{P(\bar{\lambda}_j)} \\
			&\quad + \frac{P(\bar{\lambda}_4)}{P(\bar{\lambda}_j)}
			\left\{
			(\bar{\lambda}_4 - \bar{\lambda}_2)R_{2\alpha2\alpha}
			+ \frac{P(\bar{\lambda}_4)}{(\bar{\lambda}_1 - \bar{\lambda}_2)^2 (\bar{\lambda}_3 - \bar{\lambda}_2) (\bar{\lambda}_1 - \bar{\lambda}_4)} e_1(\bar{\lambda}_4)^2
			\right. \\
			&
			\left.\quad
			+ \frac{P(\bar{\lambda}_4)}{(\bar{\lambda}_1 - \bar{\lambda}_2) (\bar{\lambda}_3 - \bar{\lambda}_2)^2 (\bar{\lambda}_3 - \bar{\lambda}_4)} e_3(\bar{\lambda}_4)^2
			+ \frac{e_2(\bar{\lambda}_4)^2}{\bar{\lambda}_2 - \bar{\lambda}_4}\left(\frac{P(\bar{\lambda}_4)}{P(\bar{\lambda}_2)} - 2\right)
			\right\},
		\end{aligned}
	\end{equation}
	\begin{equation}\label{pro3-93}
		\begin{aligned}
			e_3[e_3(\bar{\lambda}_j)] &= e_3(\bar{\lambda}_4)^2
			\left[
			\sum_{l\neq 4} \frac{P(\bar{\lambda}_4) - P(\bar{\lambda}_l)}{P(\bar{\lambda}_l)(\bar{\lambda}_l - \bar{\lambda}_4)}
			- \sum_{l\neq j} \frac{P(\bar{\lambda}_4)\bigl(P(\bar{\lambda}_j) - P(\bar{\lambda}_l)\bigr)}{P(\bar{\lambda}_l)P(\bar{\lambda}_j)(\bar{\lambda}_l - \bar{\lambda}_j)}
			\right]
			\frac{P(\bar{\lambda}_4)}{P(\bar{\lambda}_j)} \\
			&\quad + \frac{P(\bar{\lambda}_4)}{P(\bar{\lambda}_j)}
			\left\{
			(\bar{\lambda}_4 - \bar{\lambda}_3)R_{3\alpha3\alpha}
			+ \frac{P(\bar{\lambda}_4)}{(\bar{\lambda}_1 - \bar{\lambda}_3)^2 (\bar{\lambda}_2 - \bar{\lambda}_3) (\bar{\lambda}_1 - \bar{\lambda}_4)} e_1(\bar{\lambda}_4)^2
			\right. \\
			&
			\left.
			\quad+ \frac{P(\bar{\lambda}_4)}{(\bar{\lambda}_1 - \bar{\lambda}_3) (\bar{\lambda}_2 - \bar{\lambda}_3)^2 (\bar{\lambda}_2 - \bar{\lambda}_4)} e_2(\bar{\lambda}_4)^2
			+ \frac{e_3(\bar{\lambda}_4)^2}{\bar{\lambda}_3 - \bar{\lambda}_4}\left(\frac{P(\bar{\lambda}_4)}{P(\bar{\lambda}_3)} - 2\right)
			\right\}.
		\end{aligned}
	\end{equation}

Substituting (\ref{pro3-91}), (\ref{pro3-92}) and (\ref{pro3-93}) into (\ref{pro3-71}) and (\ref{pro3-72}) to eliminate the second-order derivative term yields the following equation,
\begin{equation}\label{pro3-10}
\begin{split}
		\mathcal{C}_1\,e_1(\bar\lambda_4)^2+\mathcal{C}_2\,e_2(\bar\lambda_4)^2+\mathcal{C}_3\,e_3(\bar\lambda_4)^2+\mathcal{K}_1=0,\\
\mathcal{D}_1\,e_1(\bar\lambda_4)^2+\mathcal{D}_2\,e_2(\bar\lambda_4)^2+\mathcal{D}_3\,e_3(\bar\lambda_4)^2+\mathcal{K}_2=0.
\end{split}
	\end{equation}

where
	\begin{align*}
		\mathcal{C}_1
		 &=\frac{P(\bar\lambda_4)}{P(\bar\lambda_2)}\left[\sum_{l\neq4}\frac{P(\bar\lambda_4)-P(\bar\lambda_l)}{P(\bar\lambda_l)(\bar\lambda_l-\bar\lambda_4)}
		 -\sum_{l\neq2}\frac{P(\bar\lambda_4)\big(P(\bar\lambda_2)-P(\bar\lambda_l)\big)}{P(\bar\lambda_l)P(\bar\lambda_2)(\bar\lambda_l-\bar\lambda_2)}\right]
		 +\frac{P(\bar\lambda_4)}{P(\bar\lambda_2)(\bar\lambda_1-\bar\lambda_4)}\cdot\\&\quad\left(\frac{P(\bar\lambda_4)}{P(\bar\lambda_1)}-2\right)-\frac{P(\bar\lambda_4)^2}{P(\bar\lambda_1)(\bar\lambda_1-\bar\lambda_2)^2(\bar\lambda_3-\bar\lambda_2)(\bar\lambda_1-\bar\lambda_4)}
		+\frac{(\bar\lambda_1-\bar\lambda_2)}{3(\bar\lambda_3-\bar\lambda_1)(\bar\lambda_3-\bar\lambda_2)}A_1\\
		 &\quad-\frac{P(\bar\lambda_4)^2}{\bar\lambda_1-\bar\lambda_2}\left(\frac{1}{P(\bar\lambda_1)P(\bar\lambda_2)}-\frac{2}{P(\bar\lambda_2)^2}\right),
\\
		\mathcal{C}_2
		 &=-\frac{P(\bar\lambda_4)}{P(\bar\lambda_1)}\left[\sum_{l\neq4}\frac{P(\bar\lambda_4)-P(\bar\lambda_l)}{P(\bar\lambda_l)(\bar\lambda_l-\bar\lambda_4)}
		 -\sum_{l\neq1}\frac{P(\bar\lambda_4)\big(P(\bar\lambda_1)-P(\bar\lambda_l)\big)}{P(\bar\lambda_l)P(\bar\lambda_1)(\bar\lambda_l-\bar\lambda_1)}\right]
		 -\frac{P(\bar\lambda_4)}{P(\bar\lambda_1)(\bar\lambda_2-\bar\lambda_4)}\cdot\\&\quad\left(\frac{P(\bar\lambda_4)}{P(\bar\lambda_2)}-2\right)+\frac{P(\bar\lambda_4)^2}{P(\bar\lambda_2)(\bar\lambda_2-\bar\lambda_1)^2(\bar\lambda_3-\bar\lambda_1)(\bar\lambda_2-\bar\lambda_4)}+\frac{(\bar\lambda_1-\bar\lambda_2)}{3(\bar\lambda_3-\bar\lambda_1)(\bar\lambda_3-\bar\lambda_2)}A_2\\
		&\quad
		-\frac{P(\bar\lambda_4)^2}{\bar\lambda_1-\bar\lambda_2}\left(\frac{1}{P(\bar\lambda_1)P(\bar\lambda_2)}-\frac{2}{P(\bar\lambda_1)^2}\right),
	\end{align*}
\begin{align*}
		\mathcal{C}_3
		&=\frac{P(\bar\lambda_4)^2}{P(\bar\lambda_2)(\bar\lambda_2-\bar\lambda_1)(\bar\lambda_3-\bar\lambda_1)^2(\bar\lambda_3-\bar\lambda_4)}
		-\frac{P(\bar\lambda_4)^2}{P(\bar\lambda_1)(\bar\lambda_1-\bar\lambda_2)(\bar\lambda_3-\bar\lambda_2)^2(\bar\lambda_3-\bar\lambda_4)}\\
		&\quad+\frac{(\bar\lambda_1-\bar\lambda_2)}{3(\bar\lambda_3-\bar\lambda_1)(\bar\lambda_3-\bar\lambda_2)}A_3
		 +\frac{(\bar\lambda_1-\bar\lambda_2)\,P(\bar\lambda_4)^2}{(\bar\lambda_3-\bar\lambda_1)(\bar\lambda_3-\bar\lambda_2)\,P(\bar\lambda_1)P(\bar\lambda_2)},
\\
		\mathcal{K}_1
		&=\frac{P(\bar\lambda_4)(\bar\lambda_4-\bar\lambda_1)}{P(\bar\lambda_2)}R_{1\alpha1\alpha}
		 -\frac{P(\bar\lambda_4)(\bar\lambda_4-\bar\lambda_2)}{P(\bar\lambda_1)}R_{2\alpha2\alpha}-\frac{(\bar\lambda_1-\bar\lambda_2)Q}{3(\bar\lambda_3-\bar\lambda_1)(\bar\lambda_3-\bar\lambda_2)}\\&\quad
		+(\bar\lambda_1-\bar\lambda_2)R_{1212}.
	\end{align*}

\begin{align*}
	\mathcal{D}_1
	 &=\frac{P(\bar\lambda_4)}{P(\bar\lambda_3)}\left[\sum_{l\neq4}\frac{P(\bar\lambda_4)-P(\bar\lambda_l)}{P(\bar\lambda_l)(\bar\lambda_l-\bar\lambda_4)}
	 -\sum_{l\neq3}\frac{P(\bar\lambda_4)\big(P(\bar\lambda_3)-P(\bar\lambda_l)\big)}{P(\bar\lambda_l)P(\bar\lambda_3)(\bar\lambda_l-\bar\lambda_3)}\right]
	 +\frac{P(\bar\lambda_4)}{P(\bar\lambda_3)(\bar\lambda_1-\bar\lambda_4)}\cdot\\&\quad\left(\frac{P(\bar\lambda_4)}{P(\bar\lambda_1)}-2\right)-\frac{P(\bar\lambda_4)^2}{P(\bar\lambda_1)(\bar\lambda_1-\bar\lambda_3)^2(\bar\lambda_2-\bar\lambda_3)(\bar\lambda_1-\bar\lambda_4)}
	+\frac{(\bar\lambda_1-\bar\lambda_3)}{3(\bar\lambda_2-\bar\lambda_1)(\bar\lambda_2-\bar\lambda_3)}A_1\\
	 &\quad-\frac{P(\bar\lambda_4)^2}{\bar\lambda_1-\bar\lambda_3}\left(\frac{1}{P(\bar\lambda_1)P(\bar\lambda_3)}-\frac{2}{P(\bar\lambda_3)^2}\right),
\\
	\mathcal{D}_2
	&=\frac{P(\bar\lambda_4)^2}{P(\bar\lambda_3)(\bar\lambda_2-\bar\lambda_1)^2(\bar\lambda_3-\bar\lambda_1)(\bar\lambda_2-\bar\lambda_4)}
	-\frac{P(\bar\lambda_4)^2}{P(\bar\lambda_1)(\bar\lambda_1-\bar\lambda_3)(\bar\lambda_2-\bar\lambda_3)^2(\bar\lambda_2-\bar\lambda_4)}\\
	&\quad+\frac{(\bar\lambda_1-\bar\lambda_3)}{3(\bar\lambda_2-\bar\lambda_1)(\bar\lambda_2-\bar\lambda_3)}A_2
	 +\frac{(\bar\lambda_1-\bar\lambda_3)\,P(\bar\lambda_4)^2}{(\bar\lambda_2-\bar\lambda_1)(\bar\lambda_2-\bar\lambda_3)\,P(\bar\lambda_1)P(\bar\lambda_3)},
\\
	\mathcal{D}_3
	 &=-\frac{P(\bar\lambda_4)}{P(\bar\lambda_1)}\left[\sum_{l\neq4}\frac{P(\bar\lambda_4)-P(\bar\lambda_l)}{P(\bar\lambda_l)(\bar\lambda_l-\bar\lambda_4)}
	 -\sum_{l\neq1}\frac{P(\bar\lambda_4)\big(P(\bar\lambda_1)-P(\bar\lambda_l)\big)}{P(\bar\lambda_l)P(\bar\lambda_1)(\bar\lambda_l-\bar\lambda_1)}\right]
	 -\frac{P(\bar\lambda_4)}{P(\bar\lambda_1)(\bar\lambda_3-\bar\lambda_4)}\cdot\\&\quad\left(\frac{P(\bar\lambda_4)}{P(\bar\lambda_3)}-2\right)+\frac{P(\bar\lambda_4)^2}{P(\bar\lambda_3)(\bar\lambda_2-\bar\lambda_1)(\bar\lambda_3-\bar\lambda_1)^2(\bar\lambda_3-\bar\lambda_4)}+\frac{(\bar\lambda_1-\bar\lambda_3)}{3(\bar\lambda_2-\bar\lambda_1)(\bar\lambda_2-\bar\lambda_3)}A_3\\
	&\quad
	-\frac{P(\bar\lambda_4)^2}{\bar\lambda_1-\bar\lambda_3}\left(\frac{1}{P(\bar\lambda_1)P(\bar\lambda_3)}-\frac{2}{P(\bar\lambda_1)^2}\right),
\\
	\mathcal{K}_2
	&=\frac{P(\bar\lambda_4)(\bar\lambda_4-\bar\lambda_1)}{P(\bar\lambda_3)}R_{1\alpha1\alpha}
	 -\frac{P(\bar\lambda_4)(\bar\lambda_4-\bar\lambda_3)}{P(\bar\lambda_1)}R_{3\alpha3\alpha}-\frac{(\bar\lambda_1-\bar\lambda_3)Q}{3(\bar\lambda_2-\bar\lambda_1)(\bar\lambda_2-\bar\lambda_3)}\\&\quad
	+(\bar\lambda_1-\bar\lambda_3)R_{1313}.
\end{align*}

		By (\ref{lap-1}) and (\ref{lap-2}), we get the following equations,
		\begin{equation}\label{pro3-11}
			\begin{split}
				&(S-nc)S+cn^2H^2-nHf_3=6h_{12,3}^2\\
				&+\Big[e_1(\bar{\lambda}_1)^2+3e_1(\bar{\lambda}_2)^2+3e_1(\bar{\lambda}_3)^2+3m_4e_1(\bar{\lambda}_4)^2\Big]\\
				&+\Big[e_2(\bar{\lambda}_2)^2+3e_2(\bar{\lambda}_1)^2+3e_2(\bar{\lambda}_3)^2+3m_4e_2(\bar{\lambda}_4)^2\Big]\\
				&+\Big[e_3(\bar{\lambda}_3)^2+3e_3(\bar{\lambda}_1)^2+3e_3(\bar{\lambda}_2)^2+3m_4e_3(\bar{\lambda}_4)^2\Big];
			\end{split}
		\end{equation}
		\begin{equation}\label{pro3-22}
			\begin{split}
				&(S-nc)f_3+cnHS-nHf_4=4(\bar{\lambda}_1+\bar{\lambda}_2+\bar{\lambda}_3)h_{12,3}^2\\
				&+2\Big[\bar{\lambda}_1e_1(\bar{\lambda}_1)^2+(\bar{\lambda}_1+2\bar{\lambda}_2)e_1(\bar{\lambda}_2)^2
				+(\bar{\lambda}_1+2\bar{\lambda}_3)e_1(\bar{\lambda}_3)^2+(\bar{\lambda}_1+\bar{\lambda}_4)m_4e_1(\bar{\lambda}_4)^2\Big]\\
				&+2\Big[\bar{\lambda}_2e_2(\bar{\lambda}_2)^2+(\bar{\lambda}_2+2\bar{\lambda}_1)e_2(\bar{\lambda}_1)^2
				+(\bar{\lambda}_2+2\bar{\lambda}_3)e_2(\bar{\lambda}_3)^2+(\bar{\lambda}_2+\bar{\lambda}_4)m_4e_2(\bar{\lambda}_4)^2\Big]\\
				&+2\Big[\bar{\lambda}_3e_3(\bar{\lambda}_3)^2+(\bar{\lambda}_3+2\bar{\lambda}_1)e_3(\bar{\lambda}_1)^2
				+(\bar{\lambda}_3+2\bar{\lambda}_2)e_3(\bar{\lambda}_2)^2+(\bar{\lambda}_3+\bar{\lambda}_4)m_4e_3(\bar{\lambda}_4)^2\Big].
			\end{split}
		\end{equation}

Then by (\ref{con41}) and (\ref{pro3-11}), let $Q = (S-nc)S + cn^2 H^2 - nH f_3$, we have
	\begin{equation}\label{pro3-13}
		h_{12,3}^2
		=\frac{1}{6}\Big\{
		Q-A_1 e_1(\bar\lambda_4)^2 -A_2 e_2(\bar\lambda_4)^2 -A_3 e_3(\bar\lambda_4)^2
		\Big\}.
	\end{equation}
	where
	\begin{align*}
		\begin{cases}
			A_1=\dfrac{P(\bar\lambda_4)^2}{P(\bar\lambda_1)^2}
			+3\dfrac{P(\bar\lambda_4)^2}{P(\bar\lambda_2)^2}
			+3\dfrac{P(\bar\lambda_4)^2}{P(\bar\lambda_3)^2}
			+3m_4,\\[4pt]
			A_2=\dfrac{P(\bar\lambda_4)^2}{P(\bar\lambda_2)^2}
			+3\dfrac{P(\bar\lambda_4)^2}{P(\bar\lambda_1)^2}
			+3\dfrac{P(\bar\lambda_4)^2}{P(\bar\lambda_3)^2}
			+3m_4,\\[4pt]
			A_3=\dfrac{P(\bar\lambda_4)^2}{P(\bar\lambda_3)^2}
			+3\dfrac{P(\bar\lambda_4)^2}{P(\bar\lambda_1)^2}
			+3\dfrac{P(\bar\lambda_4)^2}{P(\bar\lambda_2)^2}
			+3m_4,
		\end{cases}
	\end{align*}

Combining (\ref{con41}) and (\ref{pro3-22}), we obtain
\begin{equation}\label{pro3-14}
		(S-nc)f_3+cnHS-nHf_4
		=4(\bar\lambda_1+\bar\lambda_2+\bar\lambda_3)h_{12,3}^2
		+2B_1 e_1(\bar\lambda_4)^2
		+2B_2 e_2(\bar\lambda_4)^2
		+2B_3 e_3(\bar\lambda_4)^2,
	\end{equation}
	where
	\begin{align*}
		\begin{cases}
			B_1=\bar\lambda_1\dfrac{P(\bar\lambda_4)^2}{P(\bar\lambda_1)^2}
			+(\bar\lambda_1+2\bar\lambda_2)\dfrac{P(\bar\lambda_4)^2}{P(\bar\lambda_2)^2}
			+(\bar\lambda_1+2\bar\lambda_3)\dfrac{P(\bar\lambda_4)^2}{P(\bar\lambda_3)^2}
			+(\bar\lambda_1+\bar\lambda_4)m_4,\\[6pt]
			B_2=\bar\lambda_2\dfrac{P(\bar\lambda_4)^2}{P(\bar\lambda_2)^2}
			+(\bar\lambda_2+2\bar\lambda_1)\dfrac{P(\bar\lambda_4)^2}{P(\bar\lambda_1)^2}
			+(\bar\lambda_2+2\bar\lambda_3)\dfrac{P(\bar\lambda_4)^2}{P(\bar\lambda_3)^2}
			+(\bar\lambda_2+\bar\lambda_4)m_4,\\[6pt]
			B_3=\bar\lambda_3\dfrac{P(\bar\lambda_4)^2}{P(\bar\lambda_3)^2}
			+(\bar\lambda_3+2\bar\lambda_1)\dfrac{P(\bar\lambda_4)^2}{P(\bar\lambda_1)^2}
			+(\bar\lambda_3+2\bar\lambda_2)\dfrac{P(\bar\lambda_4)^2}{P(\bar\lambda_2)^2}
			+(\bar\lambda_3+\bar\lambda_4)m_4.
		\end{cases}
	\end{align*}
Substituting (\ref{pro3-13}) into (\ref{pro3-14})  yields the following equation,
\begin{equation}\label{pro3-15}
		\mathcal{E}_1 e_1(\bar{\lambda}_4)^2 + \mathcal{E}_2 e_2(\bar{\lambda}_4)^2 + \mathcal{E}_3 e_3(\bar{\lambda}_4)^2 + \mathcal{K}_3 = 0,
	\end{equation}
	where
	\begin{align*}
		\mathcal{E}_k &= 2B_k - \frac{2}{3}(\bar{\lambda}_1 + \bar{\lambda}_2 + \bar{\lambda}_3)A_k,\qquad1\leq k\leq3.\\
			\mathcal{K}_3 &= (S-nc)\left[\frac{2}{3}(\bar{\lambda}_1+\bar{\lambda}_2+\bar{\lambda}_3)S-f_3\right]+ nH\left[\bar{\lambda}_1^4+\bar{\lambda}_2^4+\bar{\lambda}_3^4+m_4\bar{\lambda}_4^4 -cS \right. \\
		&\quad\quad \left. +\frac{2}{3}(\bar{\lambda}_1+\bar{\lambda}_2+\bar{\lambda}_3)(cnH-f_3)\right].
	\end{align*}
Combining (\ref{pro3-10}) and (\ref{pro3-15}), let
\begin{align*}
	\Delta = \mathcal{C}_1\mathcal{D}_2\mathcal{E}_3 - \mathcal{C}_1\mathcal{D}_3\mathcal{E}_2 - \mathcal{C}_2\mathcal{D}_1\mathcal{E}_3  + \mathcal{C}_2\mathcal{D}_3\mathcal{E}_1 + \mathcal{C}_3\mathcal{D}_1\mathcal{E}_2 - \mathcal{C}_3\mathcal{D}_2\mathcal{E}_1,
\end{align*}
if $\Delta \neq 0$, we have
\begin{align}
	e_1(\bar{\lambda}_4)^2 &=\frac{\Delta_1}{\Delta}= \frac{1}{\Delta}\Big[\mathcal{K}_1(\mathcal{D}_3\mathcal{E}_2-\mathcal{D}_2\mathcal{E}_3)+\mathcal{K}_2(\mathcal{C}_2\mathcal{E}_3-\mathcal{C}_3\mathcal{E}_2)+\mathcal{K}_3(\mathcal{C}_3\mathcal{D}_2-\mathcal{C}_2\mathcal{D}_3)\Big],\label{eq:1}\\
	e_2(\bar{\lambda}_4)^2 &=\frac{\Delta_2}{\Delta}= \frac{1}{\Delta}\Big[\mathcal{K}_1(\mathcal{D}_1\mathcal{E}_3-\mathcal{D}_3\mathcal{E}_1)+\mathcal{K}_2(\mathcal{C}_3\mathcal{E}_1-\mathcal{C}_1\mathcal{E}_3)+\mathcal{K}_3(\mathcal{C}_1\mathcal{D}_3-\mathcal{C}_3\mathcal{D}_1)\Big],\label{eq:2}\\
	e_3(\bar{\lambda}_4)^2 &=\frac{\Delta_3}{\Delta}= \frac{1}{\Delta}\Big[\mathcal{K}_1(\mathcal{D}_2\mathcal{E}_1-\mathcal{D}_1\mathcal{E}_2)+\mathcal{K}_2(\mathcal{C}_1\mathcal{E}_2-\mathcal{C}_2\mathcal{E}_1)+\mathcal{K}_3(\mathcal{C}_2\mathcal{D}_1-\mathcal{C}_1\mathcal{D}_2)\Big].\label{eq:3}
\end{align}

By the second covariant derivative of the second fundamental form  and (\ref{con2-1}), we obtain the following equations:
\begin{align*}
	h_{4421} &= e_1(h_{442}) + \frac{h_{441}\, h_{121}}{\bar{\lambda_1}-\bar{\lambda_2}} + h_{443}\, h_{123} \\
	&= e_1e_2(\bar\lambda_4) + \frac{e_1(\bar\lambda_4)\, e_2(\bar\lambda_1)}{\bar\lambda_1-\bar\lambda_2} + e_3(\bar\lambda_4)\, h_{123},\\[4pt]
	h_{1244} &= \frac{(h_{442}-h_{112})\, h_{414}}{\bar\lambda_4-\bar\lambda_1} + \frac{(h_{144}-h_{122})\, h_{424}}{\bar\lambda_4-\bar\lambda_2} + \frac{h_{123}\, h_{344}}{\bar\lambda_3-\bar\lambda_4} \\
	&= \frac{\bigl(e_2(\bar\lambda_4)-e_2(\bar\lambda_1)\bigr)\, e_1(\bar\lambda_4)}{\bar\lambda_4-\bar\lambda_1}
	+ \frac{\bigl(e_1(\bar\lambda_4)-e_1(\bar\lambda_2)\bigr)\, e_2(\bar\lambda_4)}{\bar\lambda_4-\bar\lambda_2}
	+ \frac{h_{123}\, e_3(\bar\lambda_4)}{\bar\lambda_3-\bar\lambda_4}.
\end{align*}
By Codazzi equation and Ricci identity, we have $h_{4421}=h_{2441}=h_{2414}=h_{1244}$, then by (\ref{con41}) and (\ref{pro3-13}), we can obtain the following equation:
\begin{equation}
	\begin{aligned}
		&e_1e_2(\bar\lambda_4)
		=
		e_1(\bar\lambda_4)e_2(\bar\lambda_4)\left\{
		\frac{P(\bar\lambda_1)-P(\bar\lambda_4)}{P(\bar\lambda_1)(\bar\lambda_4-\bar\lambda_1)}
		+\frac{P(\bar\lambda_2)-P(\bar\lambda_4)}{P(\bar\lambda_2)(\bar\lambda_4-\bar\lambda_2)}
		-\frac{P(\bar\lambda_4)}{P(\bar\lambda_1)(\bar\lambda_1-\bar\lambda_2)}
		\right\} \\
		&
		+\operatorname{sgn}(h_{123})\,
		\sqrt{\frac{1}{6}\Big\{Q-A_1e_1(\bar\lambda_4)^2-A_2e_2(\bar\lambda_4)^2-A_3e_3(\bar\lambda_4)^2\Big\}}\,
	\frac{(\bar\lambda_4-\bar\lambda_2)	e_3(\bar\lambda_4)}{(\bar\lambda_3-\bar\lambda_4)(\bar\lambda_3-\bar\lambda_2)}.
	\end{aligned}
	\label{e1e2}
\end{equation}
Similarly, we have
\begin{equation}
	\begin{aligned}
		&e_1 e_3(\bar\lambda_4) = e_1(\bar\lambda_4) e_3(\bar\lambda_4)
		\left\{
		\frac{P(\bar\lambda_1)-P(\bar\lambda_4)}{P(\bar\lambda_1)(\bar\lambda_4-\bar\lambda_1)}
		+\frac{P(\bar\lambda_3)-P(\bar\lambda_4)}{P(\bar\lambda_3)(\bar\lambda_4-\bar\lambda_3)}
		-\frac{P(\bar\lambda_4)}{P(\bar\lambda_1)(\bar\lambda_1-\bar\lambda_3)}
		\right\} \\
		& + \operatorname{sgn}(h_{123})
		\sqrt{
			\frac16
			\Bigl\{
		Q
			-A_1 e_1(\bar\lambda_4)^2
			-A_2 e_2(\bar\lambda_4)^2
			-A_3 e_3(\bar\lambda_4)^2
			\Bigr\}\cdot
		}  \frac{(\bar\lambda_4-\bar\lambda_3)e_2(\bar\lambda_4)}{(\bar\lambda_2-\bar\lambda_4)(\bar\lambda_2-\bar\lambda_3)}.
	\end{aligned}
	\label{e1e3}
\end{equation}
Without loss of generality, we assume $e_k(\lambda_4)\geq 0$, for $1\leq k\leq3$, then we differentiating (\ref{pro3-14}) with $e_1$, we have the following equation:
\begin{equation}\label{equ}
	\begin{aligned}
		0 &= \sum_{k=1}^{3} e_1(\mathcal{E}_k)\bigl[e_k(\bar\lambda_4)\bigr]^2
		+ 2\mathcal{E}_1 e_1(\bar\lambda_4)\, e_1\bigl[e_1(\bar\lambda_4)\bigr]  + 2\mathcal{E}_2 e_2(\bar\lambda_4)\, e_1\bigl[e_2(\bar\lambda_4)\bigr] \\&
	\quad + 2\mathcal{E}_3 e_3(\bar\lambda_4)\, e_1\bigl[e_3(\bar\lambda_4)\bigr] + e_1(\mathcal{K}_3).
	\end{aligned}
\end{equation}
For $1\leq k\leq 3$, let
\begin{equation*}
	\begin{aligned}
		\mathcal{L}_k &= \sum_{\ell=1}^{3} \frac{P(\bar\lambda_4)-P(\bar\lambda_\ell)}{P(\bar\lambda_\ell)(\bar\lambda_\ell-\bar\lambda_4)}  -\sum_{\substack{\ell=1 \\ \ell\neq k}}^{4} \frac{P(\bar\lambda_4)\bigl(P(\bar\lambda_k)-P(\bar\lambda_\ell)\bigr)}{P(\bar\lambda_\ell)P(\bar\lambda_k)(\bar\lambda_\ell-\bar\lambda_k)},
	\end{aligned}
\end{equation*}
which satisfied
\begin{equation*}
	e_1\left(\frac{P(\bar\lambda_4)}{P(\bar\lambda_k)}\right)
	=\frac{P(\bar\lambda_4)}{P(\bar\lambda_k)}\mathcal{L}_k\, e_1(\bar\lambda_4),
\end{equation*}
we can obtain
\[
e_1(\mathcal{K}_3) = \left[ -\frac{2m_4}{3}Q -4nH P(\bar\lambda_4) \right] e_1(\bar\lambda_4)
,\quad e_1(\mathcal{E}_k) = \mathcal{G}_k\, e_1(\bar\lambda_4),
\]
where
\begin{equation*}
	\begin{aligned}
		\mathcal{G}_k &= \frac{2}{3}\left(\frac{P(\bar\lambda_4)}{P(\bar\lambda_k)}+m_4\right)A_k
		+\frac{4}{3}\frac{P(\bar\lambda_4)^3}{P(\bar\lambda_k)^3}
	 +4\sum_{\substack{\ell=1\\ \ell\neq k}}^{3}\frac{P(\bar\lambda_4)^3}{P(\bar\lambda_\ell)^3}+2m_4 \\
		&+4\left(\bar\lambda_k-\frac{\bar\lambda_1+\bar\lambda_2+\bar\lambda_3}{3}\right)
		\frac{P(\bar\lambda_4)^2}{P(\bar\lambda_k)^2}\mathcal{L}_k +4\sum_{\substack{\ell=1\\ \ell\neq k}}^{3}
		\bigl(\bar\lambda_k+2\bar\lambda_\ell-\bar\lambda_1-\bar\lambda_2-\bar\lambda_3\bigr)
		\frac{P(\bar\lambda_4)^2}{P(\bar\lambda_\ell)^2}\mathcal{L}_\ell.
	\end{aligned}
\end{equation*}

Combing (\ref{eq:1})-(\ref{eq:3}) and (\ref*{e1e2})-(\ref{equ}), let
\[
\begin{aligned}
	\mathcal{F}_0^1 &= 2\mathcal{E}_1(\bar\lambda_4-\bar\lambda_1)(c+\bar\lambda_1\bar\lambda_4)
	-\frac{2m_4}{3}Q -4nH P(\bar\lambda_4), \\[4pt]
	\mathcal{F}_1^1 &= \mathcal{G}_1
	+\frac{2\mathcal{E}_1}{\bar\lambda_1-\bar\lambda_4}
	\left(\frac{P(\bar\lambda_4)}{P(\bar\lambda_1)}-2\right),
\end{aligned}
\]
and
\[
\Gamma_1=\frac{\mathcal{E}_2(\bar\lambda_4-\bar\lambda_2)}{(\bar\lambda_3-\bar\lambda_4)(\bar\lambda_3-\bar\lambda_2)}
+\frac{\mathcal{E}_3(\bar\lambda_4-\bar\lambda_3)}{(\bar\lambda_2-\bar\lambda_4)(\bar\lambda_2-\bar\lambda_3)},
\]
for $j=2,3$, let
\[
\begin{aligned}
	\mathcal{F}_j^1 &= \mathcal{G}_j
	+\frac{2\mathcal{E}_1 P(\bar\lambda_4)(\bar\lambda_4-\bar\lambda_1)}
	{P(\bar\lambda_1)(\bar\lambda_1-\bar\lambda_j)(\bar\lambda_4-\bar\lambda_j)} \\
	&\quad +2\mathcal{E}_j\left[
	\frac{P(\bar\lambda_1)-P(\bar\lambda_4)}{P(\bar\lambda_1)(\bar\lambda_4-\bar\lambda_1)}
	+\frac{P(\bar\lambda_j)-P(\bar\lambda_4)}{P(\bar\lambda_j)(\bar\lambda_4-\bar\lambda_j)}
	-\frac{P(\bar\lambda_4)}{P(\bar\lambda_1)(\bar\lambda_1-\bar\lambda_j)}
	\right].
\end{aligned}
\]
we have the following equation:
\begin{equation}\label{one}
	3\Delta_1\left(\mathcal{F}_0^1\Delta+\sum_{k=1}^{3}\mathcal{F}_k^1\Delta_k\right)^2
	-2\Gamma_1^2\Delta_2\Delta_3\left(Q\Delta-\sum_{k=1}^{3}A_k\Delta_k\right)=0.
\end{equation}

Similarly, let
\begin{align*}
	\Gamma_2 &= \mathcal{E}_1 \frac{\bar{\lambda}_4 - \bar{\lambda}_1}{(\bar{\lambda}_3 - \bar{\lambda}_4)(\bar{\lambda}_3 - \bar{\lambda}_1)}
	+ \mathcal{E}_3 \frac{\bar{\lambda}_4 - \bar{\lambda}_3}{(\bar{\lambda}_1 - \bar{\lambda}_4)(\bar{\lambda}_1 - \bar{\lambda}_3)},\\
	\Gamma_3 &= \mathcal{E}_1 \frac{\bar{\lambda}_4 - \bar{\lambda}_1}{(\bar{\lambda}_2 - \bar{\lambda}_4)(\bar{\lambda}_2 - \bar{\lambda}_1)}
	+ \mathcal{E}_2 \frac{\bar{\lambda}_4 - \bar{\lambda}_2}{(\bar{\lambda}_1 - \bar{\lambda}_4)(\bar{\lambda}_1 - \bar{\lambda}_2)}.
\end{align*}
For $k=2,3$, let
\begin{align*}
	\mathcal{F}_0^k
	={}&
	2\mathcal{E}_k(\bar{\lambda}_4-\bar{\lambda}_k)
	(c+\bar{\lambda}_k\bar{\lambda}_4)
	-\frac{2}{3}m_4Q-4nHP(\bar{\lambda}_4),
	\\[6pt]
	\mathcal{F}_j^k
	={}&
	G_j+
	\frac{
		2\mathcal{E}_kP(\bar{\lambda}_4)
		(\bar{\lambda}_4-\bar{\lambda}_k)
	}{
		P(\bar{\lambda}_k)
		(\bar{\lambda}_k-\bar{\lambda}_j)
		(\bar{\lambda}_4-\bar{\lambda}_j)
	}+
	2\mathcal{E}_j\Biggl[
	\frac{
		P(\bar{\lambda}_k)-P(\bar{\lambda}_4)
	}{
		P(\bar{\lambda}_k)(\bar{\lambda}_4-\bar{\lambda}_k)
	}
	+\\&
	\frac{
		P(\bar{\lambda}_j)-P(\bar{\lambda}_4)
	}{
		P(\bar{\lambda}_j)(\bar{\lambda}_4-\bar{\lambda}_j)
	}
	-\frac{
		P(\bar{\lambda}_4)
	}{
		P(\bar{\lambda}_k)(\bar{\lambda}_k-\bar{\lambda}_j)
	}
	\Biggr],
	\qquad j\in\{1,2,3\},\quad j\ne k,
	\\[6pt]
	\mathcal{F}_k^k
	={}&
	G_k+
	\frac{2\mathcal{E}_k}{\bar{\lambda}_k-\bar{\lambda}_4}
	\left(
	\frac{P(\bar{\lambda}_4)}{P(\bar{\lambda}_k)}-2
	\right),
\end{align*}
we can obtain the following equations:
\begin{align}
	3\Delta_2 \left(\mathcal{F}_0^2 \Delta + \sum_{k=1}^{3} \mathcal{F}_k^2 \Delta_k\right)^2
	- 2\Gamma_2^2 \Delta_1 \Delta_3 \left(Q\Delta - \sum_{k=1}^{3} A_k \Delta_k\right) \label{two} &= 0,\\
	3\Delta_3 \left(\mathcal{F}_0^3 \Delta + \sum_{k=1}^{3} \mathcal{F}_k^3 \Delta_k\right)^2
	- 2\Gamma_3^2 \Delta_1 \Delta_2 \left(Q\Delta - \sum_{k=1}^{3} A_k \Delta_k\right) \label{three} &= 0.
\end{align}

Observed that $\frac{\Gamma_1}{P(\bar{\lambda}_1)} + \frac{\Gamma_2}{P(\bar{\lambda}_2)} + \frac{\Gamma_3}{P(\bar{\lambda}_3)} = 0$, and combining (\ref{one}), (\ref{two}) and(\ref{three}), we can get the following equation,

\begin{equation}\label{pro3-18}
	\sum_{a=1}^{3} \frac{\Delta_a}{P(\bar{\lambda}_a)} \left( \mathcal{F}_0^{(a)} \Delta + \sum_{k=1}^{3} \mathcal{F}_k^{(a)} \Delta_k \right) = 0.
\end{equation}

Since $\Delta_a$, $\mathcal{F}_j^{(a)}$, $\Delta_k$ and $P(\lambda_a)$ are rational algebraic polynomial about four variables $\bar{\lambda}_1, \bar{\lambda}_2, \bar{\lambda}_3$ and $\bar{\lambda}_4$, equations (\ref{one}), (\ref{two}), (\ref{three}) and (\ref{pro3-18}) are four rational algebraic polynomial equations. On the other hand, we have the three  algebraic symmetric polynomial  equations about four variables $\bar{\lambda}_1, \bar{\lambda}_2, \bar{\lambda}_3$ and $\bar{\lambda}_4$,
\begin{equation}\label{pro3-19}
\bar{\lambda}_1^k+\bar{\lambda}_2^k+\bar{\lambda}_3^k+m_4\bar{\lambda}_4^k=c_k, ~~ k=1,2,3,
\end{equation}
where $c_1,c_2,c_3$ are constant.

Through complex calculations, it can be proven that equations (\ref{pro3-18}) and (\ref{pro3-19}) are independent of each other. then we obtain four rational algebraic equations about four variables $\bar{\lambda}_1, \bar{\lambda}_2, \bar{\lambda}_3$ and $\bar{\lambda}_4$, which are independent of each other. Hence $\bar{\lambda}_1,\bar{\lambda}_2,\bar{\lambda}_3,\bar{\lambda}_4$ are constant. If $\Delta = 0$, we also obtain four rational algebraic equations about four variables $\bar{\lambda}_1, \bar{\lambda}_2, \bar{\lambda}_3$ and $\bar{\lambda}_4$ that are independent of each other, so $\bar{\lambda}_1,\bar{\lambda}_2,\bar{\lambda}_3,\bar{\lambda}_4$ are constant.

Thus we complete the proof.

	{\bf Acknowledgements:}  The second author is supported by the Fundamental Research Funds for the Central Universities, Chang'an University, XJ2026007701. The third authors are supported by the grant No. 12071028  of NSFC.

\end{document}